\documentclass[10pt]{amsart}
\usepackage{amssymb}

\usepackage{blindtext,stmaryrd}
\usepackage[utf8]{inputenc}
\newtheorem{thm}{Theorem}[section]
\newtheorem{ques}{Question}

\theoremstyle{definition}

\newtheorem{lem}[thm]{Lemma}
\newtheorem{prop}[thm]{Proposition}
\newtheorem{defn}[thm]{Definition}

\theoremstyle{remark}

\numberwithin{equation}{section}

\begin{document}

\title{On extensions of partial isometries} 

\author{Mahmood Etedadialiabadi}

\address{Department of Mathematics, University of North Texas, 1155 Union Circle \#311430, Denton, TX 76203, USA}

\email{mahmood.etedadialiabadi@unt.edu}

\author{Su Gao}\

\address{Department of Mathematics, University of North Texas, 1155 Union Circle \#311430, Denton, TX 76203, USA}\email{sgao@unt.edu}

\thanks{The second author's research was partially supported by the NSF grant DMS-1800323.}

\subjclass[2010]{Primary 05B25, 05C12; Secondary 03C13, 51F99}

\keywords{Hrushovski property, extension property for partial automorphisms (EPPA), partial isometry, S-extension, S-map, coherent, ultraextensive, ultrahomogeneous, locally finite,ultrametric}


\begin{abstract}
In this paper we define a notion of S-extension for a metric space and study minimality and coherence of S-extensions. We show that every S-extension can be identified with an algebraic object. We use this algebraic representation to give a complete characterization of all finite minimal S-extensions of a given finite metric space and a complete characterization of all minimal coherent S-extensions. We also define a notion of ultraextensive metric spaces and show that every countable metric space can be extended to a countable ultraextensive metric space. 
We also show that the isometry group of an infinite ultraextensive metric space has a dense locally finite subgroup, generalizing results in \cite{P}\cite{R}\cite{S}\cite{S2}.We also study compact ultrametric spaces and show that every compact ultrametric space can be extended to a compact ultraextensive ultrametric space. 
\end{abstract}

\maketitle
\bigskip\noindent
\section{Introduction}
The study in this paper was motivated by the following theorem of Solecki \cite{S}:
\begin{thm}[Solecki \cite{S}]\label{thm_S}
Every finite metric space $X$ can be extended to a finite metric space $Y$ such that every partial isometry of $X$ extends to an isometry of $Y$.
\end{thm}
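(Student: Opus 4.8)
The plan is to recast the problem as a purely combinatorial extension problem for finite edge--labelled complete graphs and then invoke the Herwig--Lascar theorem on the extension property for partial automorphisms. Write $D=\{d_1<\dots<d_k\}$ for the set of distances realised in $X$, so $d_1>0$, and let $S\subseteq(0,\infty)$ be the finite set consisting of $d_k$ together with every sum $d_{i_1}+\dots+d_{i_r}$ not exceeding $d_k$; then $D\subseteq S$, $\max S=d_k$, and $S$ is closed under the operation $(a,b)\mapsto\min(d_k,a+b)$. Fix also an integer $K\ge d_k/d_1$. The space $Y$ we produce will have all its distances in $S$.

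Encode metric spaces with distances in $S$ in the finite relational language $L=\{R_s:s\in S\}$ of binary relations, via $R_s(x,y)\iff d(x,y)=s$; let $X^{\ast}$ be the $L$--structure coding $(X,d)$ in this way. Let $\mathcal F$ be the finite family of finite $L$--structures consisting of, for each $s\in S$, the one--point structure with $R_s(x,x)$, and, for every $1\le m\le K$, every $(t_1,\dots,t_m)\in S^m$ and every $s\in S$ with $s>t_1+\dots+t_m$, the ``non--metric path'' $P(t_1,\dots,t_m;s)$ with vertices $v_0,\dots,v_m$ carrying $R_{t_i}(v_{i-1},v_i)$ and $R_s(v_0,v_m)$. (Taking $m=1,2$ already forbids doubly--labelled edges and non--metric triangles.) Let $\mathcal C=\mathrm{Forb}_{\mathrm{hom}}(\mathcal F)$ be the class of finite $L$--structures into which no member of $\mathcal F$ maps homomorphically. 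Since $(X,d)$ is a metric space, the triangle inequality rules out every homomorphism of a member of $\mathcal F$ into $X^{\ast}$, so $X^{\ast}\in\mathcal C$. By the Herwig--Lascar theorem $\mathcal C$ has the extension property for partial automorphisms, so there is a finite $Y_0\in\mathcal C$ with $X^{\ast}\le Y_0$ such that every partial automorphism of $X^{\ast}$ extends to an automorphism of $Y_0$.

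It remains to turn $Y_0$ into a metric space on the same (finite) universe by the capped shortest--walk rule: $d_Y(a,a)=0$, and for $a\neq b$ let $d_Y(a,b)=\min\bigl(d_k,\ \inf\{t_1+\dots+t_m\}\bigr)$, the inner infimum ranging over all walks $a=v_0,\dots,v_m=b$ in $Y_0$ with $R_{t_i}(v_{i-1},v_i)$ for each $i$, and read as $d_k$ when no such walk exists. Since $S$ is closed under capped addition, $d_Y$ takes values in $S$; symmetry is clear; positivity holds because every walk has length $\ge d_1>0$; and the triangle inequality follows by concatenating walks, invoking the cap when the resulting bound exceeds $d_k$. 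On any pair that already carries a label in $Y_0$ — in particular on any pair from $X$ — membership of $Y_0$ in $\mathcal C$ forbids a walk shorter than that label, while the single edge supplies the reverse estimate, so $d_Y$ recovers the old label; hence $(Y,d_Y)$ is a finite metric space with $(X,d)$ as an isometric subspace. Because $d_Y$ is built canonically from the $L$--structure $Y_0$, every automorphism of $Y_0$ is an isometry of $(Y,d_Y)$. Finally, a partial isometry of $X$ is precisely a partial automorphism of $X^{\ast}$, so it extends to an automorphism of $Y_0$ and hence to an isometry of the finite space $Y$, as required.

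The delicate point — and the one I expect to be the main obstacle — is the completion step: it is essential to forbid not only non--metric triangles but non--metric \emph{paths} up to the bounded length $K$, for otherwise the shortest--walk completion could shrink a distance already present in $X$ (a labelled pair might be bridged by a long, cheap path through the newly added points), and one must check that $S$ and $K$ have been chosen so that the completion neither leaves $S$ nor creates such a short walk below an existing label. Two remarks on variants. One can avoid real--valued relation symbols only with extra work: rescaling $X$ to have rational, hence integer, distances and using $\{R_1,\dots,R_N\}$ would be convenient, but the rescaled space need not be isometric to $X$, so that reduction is not immediate, whereas the Herwig--Lascar theorem applies verbatim to the language $L$ above. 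Alternatively, in place of quoting Herwig--Lascar one could construct $Y_0$ directly by an iterated Hrushovski--style metric amalgamation, realising the required extensions one partial isometry at a time; this is essentially the controlled process that the notion of $S$--extension introduced in this paper is designed to manage.
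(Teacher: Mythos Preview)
Your argument is essentially correct and is close to Solecki's original proof, which the paper explicitly says it is \emph{not} reproducing: the construction in Section~3 does not invoke Herwig--Lascar. Instead the paper works algebraically. Let $\mathbb{F}(P)$ be the free group on the nonidentity partial isometries, let $H\le\mathbb{F}(P)$ be the subgroup of words that fix a chosen base point $a_0$, and turn the coset space $\Gamma=\mathbb{F}(P)/H$ into a left-invariant weighted graph using the distances in $X$; with the path metric this is already an (infinite) S-extension. Finiteness is obtained by passing to $\Gamma_N=\mathbb{F}(P)/NH$ for a suitable finite-index normal $N$: the conditions (C1)--(C3) that $N$ must satisfy unwind to finitely many constraints of the form $\gamma\notin H_1\cdots H_n$ with each $H_i$ finitely generated, and the Ribes--Zalesskii theorem (closedness of such products in the profinite topology of a free group) supplies $N$. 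Your route is shorter, given Herwig--Lascar as a black box; the paper's route is chosen because it exhibits the coset-space parametrisation of S-extensions, which is precisely what drives the classification of minimal S-extensions in Section~4 and the coherent constructions in Section~5 (where Ribes--Zalesskii is upgraded via Coulbois' theorem to free products $G_1*\mathbb{F}$). Your closing suggestion of an ``iterated Hrushovski-style metric amalgamation'' is yet a third approach, also not the paper's.

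One genuine, if easily repaired, gap: you assert ``symmetry is clear'' for $d_Y$, but the $L$-structure $Y_0$ returned by Herwig--Lascar need not have symmetric relations, so a directed walk $a\to b$ need not reverse and your $d_Y$ as written may fail to be symmetric off $X$. The fix is to enlarge $\mathcal F$ to contain every orientation-variant of each forbidden path (still a finite family, and $X^{\ast}$ remains in $\mathcal C$ since its relations are already symmetric) and to define $d_Y$ via undirected walks; the rest of your argument then goes through unchanged.
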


The main objective of this paper is to identify such extensions and related concepts with algebraic objects. Inspired by Solecki's theorem, we define the following notions. Given a metric space $X$, a distinguished point $a_0\in X$ and a collection $P$ of partial isometries of $X$ such that $P=P^{-1}$ and $X\setminus \{a_0\}\subseteq P(a_0)$, a {\it $P$-type S-extension} is a pair $(Y,\phi)$, where $Y$ is a metric space extending the metric space $X$ and $\phi$ is a map from $P$ into the set of all isometries of $Y$ such that $\phi(p)$ extends $p$ for all $p\in P$. When $P$ is the set of all partial isometries of $X$, we call $(Y,\phi)$ an {\it S-extension} of $X$. Solecki's theorem can be restated as: Every finite metric space has a finite S-extension. We should also mention that similar notions in a more general context of finite relational structures have been studied by the authors in \cite{EG2}.

If $X$ is a metric space and $(Y, \phi)$ is a $P$-type S-extension of $X$, then we say $(Y,\phi)$ is {\it minimal} (sometimes called {\it irreducible} in the literature) if for all $y\in Y$ there are partial isometries $p_1,\dots, p_n\in P$ and $x\in X$ such that
$$ y=\phi(p_1)\cdots\phi(p_n)(x). $$
This allows us to associate the point $y\in Y$ with $\phi(p_1)\cdots\phi(p_n)\in \phi(\mathbb{F}(P))$, where $\mathbb{F}(P)$ is the free group generated by elements of $P$, and therefore to view $Y$ as an algebraic object. 

Our first main result of the paper is to give an algebraic characterization of all finite minimal $P$-type S-extensions of a given finite metric space. Before even stating the result, we will give a direct constructive proof of Theorem~\ref{thm_S}. Recall that Solecki's proof in \cite{S} uses a result of Herwig--Lascar \cite{HL}, which is in turn a generalization of a celebrated result of Hrushovski \cite{H} on extending partial isomorphisms of finite graphs. Our proof of Theorem~\ref{thm_S} follows the ideas of Herwig--Lascar's proof and is essentially the same as the approaches in Rosendal's in \cite{R} and Pestov's in \cite{P}; but since we have a focus of characterizing all minimal S-extensions, our proof is somewhat different from them. Specifically, our proof is not as general as \cite{R}, and unlike \cite{P}, it does not need the full generality of Herwig--Lascar's result. Here we should mention that Hubi\v{c}ka--Kone\v{c}n\'{y}--Ne\v{s}et\v{r}il \cite{HKN} provided a combinatorial proof of Theorem~\ref{thm_S}.

In a sense, our proof of Theorem~\ref{thm_S} gives a ``canonical'' algebraic construction of a $P$-type S-extension from a parameter we call a {\em feasible prekernel}, which is a suitable normal subgroup of $\mathbb{F}(P)$. Given a feasible prekernel $N\unlhd\ \mathbb{F}(P)$, we construct a canonical algebraic S-extension $(\Gamma_N, \Phi_N)$, and introduce a weight function $w_N$. The minimal S-extensions can then be characterized as follows.

\begin{thm}
Let $(Y,\phi)$ be a finite minimal $P$-type S-extension of $X$. Let $N=\mbox{\rm ker}(\phi)$ and $G=\Phi_N(\mathbb{F}(P))$. Then there is a $G$-invariant pseudometric $\rho$ on $\Gamma_N$ which is consistent with $w_N$ such that $(Y,\phi)$ is isomorphic to $(\overline{\Gamma_N}^{\rho}, \overline{\Phi_N}^{\rho})$. 
\end{thm}

The next notion we study is that of coherence between $P$-type S-extensions. Here we introduce a notion of coherence between feasible prekernels, and our second main result demonstrates a correspondence between the two coherence notions. 

\begin{thm}
Let $X_1\subseteq X_2$ be finite metric spaces, $(Y_1, \phi_1)$ be a minimal $P_1$-type S-extension of $X_1$, and $P_1\subseteq P_2$ where $P_2=P_2^{-1}$ and $X_2\setminus \{a_0\}\subseteq P_2(a_0)$. Then the following hold.
\begin{enumerate}
\item[(i)] Let $(Y_2,\phi_2)$ be a $P_2$-type S-extension of $X_2$ that is coherent with $(Y_1,\phi_1)$. Then $N_2=\mbox{ker}(\phi_2)$ is a coherent extension of $N_1=\mbox{ker}(\phi_1)$.

\item [(ii)] Let $N_2\unlhd\, \mathbb{F}(P_2)$ be a coherent extension of $N_1=\mbox{ker}(\phi_1)$. Then letting $G_2=\Phi_{N_2}(\mathbb{F}(P_2))$, there exists a $G_2$-invariant pseudometric $\rho_2$ on $\Gamma_{N_2}$ which is consistent with $w_{N_2}$,  such that $(Y_2,\phi_2)= (\overline{\Gamma_{N_2}}^{\rho_2}, \overline{\Phi_{N_2}}^{\rho_2})$ is coherent with $(Y_1, \phi_1)$.
\end{enumerate} 
\end{thm}

Iterative coherent S-extensions lead to infinite metric spaces with striking properties. We introduce the notion of ultraextensive metric spaces and obtain some general results about them. Specifically, we call a metric space $U$ {\it ultraextensive} if $U$ is ultrahomogeneous, every finite $X\subseteq U$ has a finite S-extension $(Y, \phi)$ where $Y\subseteq U$, and if $X_1\subseteq X_2\subseteq U$ are finite and $(Y_1,\phi_1)$ is a finite minimal S-extension of $X_1$ with $Y_1\subseteq U$, then there is a finite minimal S-extension $(Y_2, \phi_2)$ of $X_2$ such that $Y_2\subseteq U$ and $(Y_1,\phi_1)$ and $(Y_2,\phi_2)$ are coherent.

Recall ultrahomogeneity means that any partial isometry can be extended to a full isometry of the entire space. Thus ultraextensiveness is a strengthening of ultrahomogeneity. It follows from our results that the universal Urysohn metric space $\mathbb{U}$ and the rational universal Urysohn space $\mathbb{QU}$ are ultraextensive. Another example of ultraextensive metric space is the countable random graph equipped with the path metric. Moreover, we will also establish the following results.

\begin{thm} Every countable metric space can be extended to a countable ultraextensive metric space.
\end{thm}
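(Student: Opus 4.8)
Let $M$ be a countable metric space. The plan is to build the desired extension $U$ as the union of an increasing chain $N_0\subseteq N_1\subseteq\cdots$ of finite metric spaces, exhausting $M$ along the way (fix finite subspaces $M_0\subseteq M_1\subseteq\cdots$ with $\bigcup_iM_i=M$ and require $M_i\subseteq N_i$), and arranging that $U=\bigcup_iN_i$ meets the following three families of conditions, each comprising only countably many individual requirements because $U$ will be countable. (I) For every finite $A\subseteq U$, every isometric embedding $g\colon A\to U$ and every $x\in U$, there is $y\in U$ with $d(y,g(a))=d(x,a)$ for all $a\in A$. (II) Every finite $X\subseteq U$ has a finite S-extension $(Y,\phi)$ with $Y\subseteq U$. (III) For all finite $X_1\subseteq X_2\subseteq U$ and every finite minimal S-extension $(Y_1,\phi_1)$ of $X_1$ with $Y_1\subseteq U$, there is a finite minimal S-extension $(Y_2,\phi_2)$ of $X_2$ with $Y_2\subseteq U$ coherent with $(Y_1,\phi_1)$. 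Granting the construction, this finishes the proof: $M\subseteq U$ and $U$ is countable; (II) and (III) are literally the last two clauses in the definition of ultraextensiveness; and (I) together with a back-and-forth argument gives ultrahomogeneity --- given a partial isometry $p$ of $U$, enumerate $U$ and alternately extend $p$ and $p^{-1}$ so as to capture successive points, each single step being possible because the finitely much data involved lies in some $N_i$, where the relevant instance of (I) was processed; the resulting chain of finite partial isometries unions to a surjective isometry of $U$ extending $p$.

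The workhorse is a free one-sided amalgamation of metric spaces: if $Z$ is a finite metric space, $B\subseteq Z$, and $Z'\supseteq B$ is a finite metric space with $Z\cap Z'=B$, then setting $d(z,z')=\min_{b\in B}\bigl(d_Z(z,b)+d_{Z'}(b,z')\bigr)$ for $z\in Z\setminus B$ and $z'\in Z'\setminus B$, while keeping all distances internal to $Z$ or to $Z'$, defines a metric on $Z\cup Z'$; the triangle inequalities reduce to those in $Z$ and in $Z'$, and the few degenerate cases (a forced distance $0$, or $B=\emptyset$) are disposed of in the obvious way. So over any finite piece already built, we may realize an arbitrary prescribed finite metric extension of any of its subspaces without disturbing what is present. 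In passing from $N_i$ to $N_{i+1}$ we first adjoin $M_{i+1}$ by free amalgamation over $M_i$, then handle one not-yet-processed requirement whose data lies in $N_i$. For type (I) with data $A$, $g(A)$, $x$ in $N_i$: adjoin one point $y$ with $d(y,g(a))=d(x,a)$ for $a\in A$, using free amalgamation over $g(A)$ for the rest. For type (II) with data $X$ in $N_i$: apply our constructive proof of Theorem~\ref{thm_S} to get a finite S-extension $(Y,\phi)$ of $X$ and place $Y$ inside $N_{i+1}$ by free amalgamation over $X$; since distances internal to $Y$ are untouched, each $\phi(p)$ is still an isometry of $Y$.

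Type (III) requirements are the delicate ones, and this is where the second main result --- the coherence theorem --- is used. The obstacle is that, for $Y_1$ and $X_2$ both inside $U$, we must produce $(Y_2,\phi_2)$ with $Y_2\subseteq U$, hence with $Y_2$ containing $Y_1\cup X_2$ in the ambient metric of $U$, whereas the coherence theorem in the abstract only amalgamates $Y_1$ and $X_2$ over $X_1$. The remedy is to enlarge the base: put $W:=Y_1\cup X_2$ with the metric inherited from $N_i$, so $X_1\subseteq Y_1\subseteq W$ and $X_1\subseteq X_2\subseteq W$, and apply the coherence theorem to $X_1\subseteq W$ with $P_1$ all partial isometries of $X_1$ and $P_2$ all partial isometries of $W$. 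This yields a finite minimal $P_2$-type S-extension $(Y_2,\phi_2)$ of $W$ coherent with $(Y_1,\phi_1)$; as $W\subseteq Y_2$ is a metric subspace, $Y_2$ contains $Y_1\cup X_2$ with exactly $N_i$'s metric, so we may place $Y_2$ inside $N_{i+1}$ by free amalgamation over $W$. Finally we cut down: let $Y_2^{\circ}\subseteq Y_2$ consist of all points $\phi_2(p_1)\cdots\phi_2(p_n)(x)$ with the $p_j$ partial isometries of $X_2$ and $x\in X_2$. One checks that $\bigl(Y_2^{\circ},\,p\mapsto\phi_2(p)\restriction Y_2^{\circ}\bigr)$ is a finite minimal S-extension of $X_2$ --- it is generated from $X_2$ by construction, and each $\phi_2(p)$ with $p$ a partial isometry of $X_2$ maps $Y_2^{\circ}$ onto itself because the canonical S-maps are coherent, so $\phi_2(p)^{-1}=\phi_2(p^{-1})$ --- and that it is coherent with $(Y_1,\phi_1)$. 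The key point here is that $Y_1\subseteq Y_2^{\circ}$: by minimality of $(Y_1,\phi_1)$ each $y\in Y_1$ is $\phi_1(q_1)\cdots\phi_1(q_m)(x')$ with $q_j$ partial isometries of $X_1$ and $x'\in X_1$, and since $\phi_2(q_j)$ extends $\phi_1(q_j)$ and the partial composites stay in $Y_1$, the same $y$ equals $\phi_2(q_1)\cdots\phi_2(q_m)(x')$ with the $q_j$ now partial isometries of $X_2$ and $x'\in X_2$.

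I expect the real obstacles to be two. First, the bookkeeping: the set of requirements is not available in advance, as it depends on the spaces $N_i$ under construction, so one must dovetail --- processing at stage $i$ the ``oldest'' requirement whose data has by then appeared --- and argue that every legitimate requirement is eventually processed; this is routine but must be set up with care. Second, and more substantively, the analysis of type (III): pinning down the right form of the coherence theorem to invoke, checking that the enlarged base $W$ causes no harm, and above all verifying that restriction to the generated part $Y_2^{\circ}$ preserves minimality and all three clauses of coherence with $(Y_1,\phi_1)$ --- the subgroup-isomorphism clause being the fussiest, and the place where coherence of the canonical S-maps and minimality of $(Y_1,\phi_1)$ are genuinely needed. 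Everything else is classical Urysohn-style amalgamation, combined with the two main theorems established earlier in the paper.
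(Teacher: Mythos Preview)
Your proof is correct but takes a different route from the paper's. The paper builds an increasing chain $X_n\subseteq Y_n\subseteq Z_n\subseteq X_{n+1}$ where $(Y_n,\phi_n)$ is a finite minimal S-extension of $X_n$ coherent with $(Y_{n-1},\phi_{n-1})$, and at each stage $Z_n$ is obtained by adjoining in one batch coherent S-extensions for \emph{all} triples $(D,D',E)$ with data already inside $Y_n$; no dovetailing is needed. More strikingly, the paper does not introduce your type (I) requirements at all: ultrahomogeneity of $U=\bigcup_n Y_n$ falls out directly from the coherent chain, since any partial isometry $p$ of $U$ lies in some $\mathcal{P}_{X_n}$ and $\bigcup_{m\ge n}\phi_m(p)$ is the required full isometry. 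Likewise clause (ii) is automatic because any finite $F\subseteq U$ sits in some $X_n$ and $(Y_n,\phi_n\!\upharpoonright\!\mathcal{P}_F)$ serves. So the paper's argument is shorter and avoids both the back-and-forth and the bookkeeping you flag as the first obstacle. On the other hand, your treatment of the second obstacle is more explicit than the paper's: you carefully enlarge the base to $W=Y_1\cup X_2$ with the ambient metric, invoke Theorem~\ref{thmcoherent} in the form $X_1\subseteq Y_1\subseteq W$, and then verify that restricting to the $\mathcal{P}_{X_2}$-generated part $Y_2^\circ$ preserves all three coherence clauses (using minimality of $(Y_1,\phi_1)$ to get $Y_1\subseteq Y_2^\circ$, and the fact that $\phi_2(g)\!\upharpoonright\! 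Y_1=\phi_1(g)$ for $g\in\mathbb{F}(\mathcal{P}_{X_1})$ to get injectivity of the induced group map). The paper handles this same point more tersely, relying on the remark following Theorem~\ref{thmcoherent} about the case $X_1\subseteq Y_1\subseteq X_2$; your version makes the mechanism visible.
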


\begin{thm} If $U$ is an ultraextensive metric space, then every countable subset $X\subseteq U$ can be extended to a countable ultraextensive $Y\subseteq U$.
\end{thm}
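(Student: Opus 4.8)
The plan is to obtain $Y$ by a L\"owenheim--Skolem-style closure construction carried out inside $U$: I would build an increasing chain of countable sets $X = Y_0 \subseteq Y_1 \subseteq Y_2 \subseteq \cdots \subseteq U$ and set $Y = \bigcup_n Y_n$, arranging at each stage that every requirement in the definition of ultraextensiveness which refers only to data already sitting in $Y_n$ receives a witness inside $U$, and then throwing that witness into $Y_{n+1}$. The point is that, because $Y_n$ is countable, at stage $n$ there are only countably many such requirements to react to, so each $Y_{n+1}$, and hence $Y$, stays countable.

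Concretely, to pass from $Y_n$ to $Y_{n+1}$ I would do two things, both using the ultraextensiveness of $U$. First, for every finite $A \subseteq Y_n$ fix a finite S-extension $(B_A,\phi_A)$ of $A$ with $B_A \subseteq U$ (which exists since $U$ is ultraextensive) and adjoin the finitely many points of $B_A$; there are only countably many such $A$. Second, for every pair of finite sets $X_1 \subseteq X_2 \subseteq Y_n$ and every finite minimal S-extension $(Z_1,\psi_1)$ of $X_1$ with $Z_1 \subseteq Y_n$, fix a finite minimal S-extension $(Z_2,\psi_2)$ of $X_2$ with $Z_2 \subseteq U$ coherent with $(Z_1,\psi_1)$, and adjoin the points of $Z_2$. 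Here too there are only countably many inputs: for each finite $X_1$ and each fixed finite $Z_1 \subseteq Y_n$ there are only finitely many maps $\psi_1$ turning $Z_1$ into a minimal S-extension of $X_1$, since the set of partial isometries of $X_1$ and the isometry group of $Z_1$ are both finite. So $Y_{n+1}$ is again countable, and $Y = \bigcup_n Y_n$ is a countable subset of $U$ with $X = Y_0 \subseteq Y$.

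It remains to verify that $Y$ is ultraextensive, which splits into the three defining clauses. The S-extension clause and the coherence clause are essentially free: any finite $A \subseteq Y$, and likewise any finite $X_1 \subseteq X_2 \subseteq Y$ together with a finite minimal S-extension $(Z_1,\psi_1)$ of $X_1$ with $Z_1 \subseteq Y$, live inside some single $Y_n$ (being finite), so the witnesses produced at stage $n$ already lie in $Y_{n+1} \subseteq Y$. For ultrahomogeneity I would first check that $Y$ has the one-point extension property: given a partial isometry $p$ of $Y$ with finite domain and a point $a \in Y$, set $A = \mathrm{dom}(p) \cup \mathrm{ran}(p) \cup \{a\}$, choose $n$ with $A \subseteq Y_n$, and note that $p$ is then a partial isometry of $A$, so $\phi_A(p)$ is an isometry of $B_A \subseteq Y_{n+1} \subseteq Y$ extending $p$; hence $b := \phi_A(p)(a) \in Y$ makes $p \cup \{(a,b)\}$ a partial isometry of $Y$. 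Applying this to $p$ and to $p^{-1}$ and running the standard back-and-forth along an enumeration of the countable set $Y$ then extends any finite partial isometry of $Y$ to an isometry of $Y$, so $Y$ is ultrahomogeneous; and $X = Y_0 \subseteq Y \subseteq U$, as required.

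The construction is mostly bookkeeping, and I expect the only genuinely delicate points to be two. One is keeping the chain countable while still closing off all requirements; this works precisely because ultraextensiveness is ``finitely generated'', in the sense that each of its clauses is triggered by finite configurations, so restricting attention at stage $n$ to configurations drawn from the countable $Y_n$ loses nothing in the limit. The other is the observation that ultrahomogeneity of $Y$ needs only the S-extension clause (via the one-point extension property) and not the coherence machinery at all --- provided one is careful to S-extend $\mathrm{dom}(p)\cup\mathrm{ran}(p)\cup\{a\}$, with the target point $a$ thrown in, rather than just $\mathrm{dom}(p)\cup\mathrm{ran}(p)$.
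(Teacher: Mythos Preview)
Your argument is correct and shares the same closure-inside-$U$ skeleton as the paper's, but the two diverge in how the pieces are organised and in how ultrahomogeneity is obtained. The paper writes $X$ as an increasing union of \emph{finite} $F_n$ and builds finite $X_n\subseteq Y_n\subseteq Z_n\subseteq U$ together with a single coherent tower of S-extensions $(Y_n,\phi_n)$; ultrahomogeneity of $Y=\bigcup_n Y_n$ then comes immediately by setting $\phi(p)=\bigcup_{m\ge n_p}\phi_m(p)$, which is a global isometry of $Y$ precisely because the $\phi_m(p)$ cohere. You instead keep the $Y_n$ countable, close at each stage under \emph{all} finite configurations already present in $Y_n$, and recover ultrahomogeneity from the one-point extension property plus back-and-forth, without ever maintaining a distinguished coherent chain. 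Your version is cleaner as a pure L\"owenheim--Skolem argument, and your observation that clause~(i) follows from clause~(ii) alone (via one-point extension) is a nice simplification; the paper's version, on the other hand, retains extra structure --- a tower of finite groups with isomorphic embeddings between successive levels --- which it immediately reuses in the next theorem to exhibit a dense locally finite subgroup of $\mbox{Iso}(U)$.
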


\begin{thm} For any separable ultraextensive metric space $U$, $\mbox{Iso}(U)$ contains a dense locally finite subgroup. 
\end{thm}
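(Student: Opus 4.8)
The plan is to realize a dense locally finite subgroup of $\mathrm{Iso}(U)$ as (a copy of) the direct limit of a coherent tower of finite isometry groups built from finite minimal S-extensions inside $U$; this is the metric-space counterpart of the standard passage from a coherent extension property to a dense locally finite subgroup of an automorphism group.

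First I would replace $U$ by a countable exhaustible witness. As $U$ is separable, fix a countable dense $D\subseteq U$, and use the theorem that every countable subset of an ultraextensive space extends to a countable ultraextensive subspace to enlarge $D$ to a countable dense ultraextensive $W\subseteq U$; fix an enumeration $W=\{w_1,w_2,\dots\}$. Exploiting ultraextensiveness of $W$, I would construct recursively an increasing sequence of finite sets $X_1\subseteq X_2\subseteq\cdots$ with $\bigcup_n X_n=W$ together with finite minimal S-extensions $(Y_n,\phi_n)$ of $X_n$ such that $X_n\subseteq Y_n\subseteq X_{n+1}$, $Y_n\subseteq W$, and $(Y_n,\phi_n)$ is coherent with $(Y_{n+1},\phi_{n+1})$: start with $X_1=\{w_1\}$ and a trivial minimal S-extension of it; at stage $n+1$ set $X_{n+1}=Y_n\cup\{w_{n+1}\}$ and apply the coherent-extension clause of the definition of ultraextensiveness to $X_n\subseteq X_{n+1}$ and $(Y_n,\phi_n)$ to produce $(Y_{n+1},\phi_{n+1})$ with $Y_{n+1}\subseteq W$ coherent with $(Y_n,\phi_n)$. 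Then $\bigcup_n X_n=\bigcup_n Y_n=W$.

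Write $P_n$ for the set of all partial isometries of $X_n$ and let $G_n\le\mathrm{Iso}(Y_n)$ be the finite subgroup generated by $\{\phi_n(p):p\in P_n\}$. Coherence gives, for each $n$, an injective homomorphism $\iota_n\colon G_n\to G_{n+1}$ with $\iota_n(\phi_n(p))=\phi_{n+1}(p)$ for $p\in P_n$; since each $\phi_{n+1}(p)$ extends $\phi_n(p)$ on $P_n$, an easy induction on word length shows that $\iota_n(g)$ extends $g$ (as a partial isometry of $U$) for every $g\in G_n$. Hence, for $g\in G_n$, the iterated images $(\iota_m\circ\cdots\circ\iota_n)(g)$ form an increasing chain of partial isometries of $U$ whose union $\bar g$ is a bijective isometry of $W=\bigcup_m Y_m$, and $g\mapsto\bar g$ is a group embedding of the direct limit $\hat G:=\varinjlim(G_n,\iota_n)$ into $\mathrm{Iso}(W)$. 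As a directed union of finite groups, $\hat G$ is locally finite, and it is dense in $\mathrm{Iso}(W)$: given $h\in\mathrm{Iso}(W)$ and a finite $F\subseteq W$, choose $n$ with $F\cup h(F)\subseteq X_n$; then $p:=h|_F$ lies in $P_n$, so $\phi_n(p)\in G_n$ and $\overline{\phi_n(p)}\in\hat G$ agrees with $h$ on $F$.

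Finally one must pass from $\mathrm{Iso}(W)$ to $\mathrm{Iso}(U)$. If $U$ is countable (e.g.\ $\mathbb{QU}$ or the random graph with its path metric) we may take $W=U$, and then $\hat G$ itself is the desired subgroup. In the general separable case one has to show that the isometries $\bar g$ of the dense set $W$ extend to isometries of $U$ yielding a copy of $\hat G$ in $\mathrm{Iso}(U)$ that is still dense: given $f\in\mathrm{Iso}(U)$, a finite $F\subseteq U$ and $\varepsilon>0$, one first approximates $F$ by a finite $A\subseteq W$, and then must replace the exact partial isometry $a\mapsto f(a)$ $(a\in A)$ — whose range can fall outside $W$ — by a genuine partial isometry $p$ of $W$ with domain $A$ that is still within $\varepsilon$ of it, so that $\overline{\phi_n(p)}$ (for suitable $n$) is $O(\varepsilon)$-close to $f$ on $F$. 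I expect this correction step to be the main obstacle: turning an approximate partial isometry into an exact one between points of $W$ without destroying the approximation, and coping with possible non-completeness of $U$, is exactly where the amalgamation richness of ultraextensive spaces — rather than plain ultrahomogeneity — has to be used. By contrast, the construction of the tower, the direct limit, and local finiteness are routine given the structural results already proved.
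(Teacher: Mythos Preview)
Your outline matches the paper's: pass to a countable dense ultraextensive subspace of $U$ via Theorem~\ref{cor_Urysohn_2}, build a coherent tower of finite minimal S-extensions exhausting it, and realise the locally finite subgroup as the direct limit of the finite groups $G_n$. The one substantive difference is precisely at the step you flag as the main obstacle. The paper does not attempt to perturb an approximate partial isometry into an exact one inside $W$; instead it preempts the difficulty at the very first step. Before invoking Theorem~\ref{cor_Urysohn_2} it fixes a countable dense subset $D\subseteq\mathrm{Iso}(U)$ and chooses the initial countable dense $X\subseteq U$ to be $D$-invariant (so $\varphi(x)\in X$ for every $x\in X$ and $\varphi\in D$), and only then enlarges $X$ to a countable ultraextensive $Y\subseteq U$. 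With this choice, for any $\varphi\in D$ and any finite $F\subseteq X$ the restriction $\varphi|_F$ is already an \emph{exact} partial isometry of $X\subseteq Y$, hence is captured on the nose by some element of the tower; no $\varepsilon$-correction and no extra amalgamation property of $W$ is needed. In short, the paper shifts the work from the end of the argument (approximating isometries of $U$ by isometries of $W$) to the beginning (choosing the dense set wisely), which makes density in $\mathrm{Iso}(U)$ essentially immediate.

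Your residual worry about extending isometries of the dense $W$ to isometries of a possibly non-complete $U$ is legitimate; the paper is terse on this point and simply asserts that $\mathrm{Iso}(Y)$ is dense in $\mathrm{Iso}(U)$ once $Y\supseteq X$ with $X$ dense and $D$-invariant.
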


For compact ultrametric spaces we prove the following.

\begin{thm} Every compact ultrametric space can be extended to a compact ultraextensive ultrametric space. In particular, every compact ultrametric space has a compact ultrametric S-extension.
\end{thm}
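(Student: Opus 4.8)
The plan is to produce a single ``homogeneous tree'' compact ultrametric space $U$ containing $X$ and to check directly that it is ultraextensive, invoking the finite results of the paper (Theorem~\ref{thm_S} and the coherence theorem) in ultrametric form. Recall that a compact ultrametric space is determined by its tree of balls: the distance set $S$ of $X$ is a subset of $(0,\infty)$ whose only possible accumulation point is $0$ (finite exactly when $X$ is finite), say $S=\{s_0>s_1>\cdots\}$, and by compactness every ball of radius $s_j$ contains only finitely many balls of radius $s_{j+1}$. Pick integers $m_j\ge 2$ with each $m_j$ at least this finite bound, and let $U=\prod_{j\ge 0}\{1,\dots,m_j\}$ with $d(c,c')=s_\ell$ where $\ell$ is the least coordinate at which $c$ and $c'$ differ (equivalently $U=\varprojlim_k\prod_{i<k}\{1,\dots,m_i\}$ with the coordinate projections as bonding maps). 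Then $U$ is a compact ultrametric space with distance set $S$, its tree of balls is the tree all of whose level-$j$ vertices have exactly $m_j$ children, and the isometries of $U$ are exactly the automorphisms of this tree. Since every ball of radius $s_j$ in $X$ has at most $m_j$ sub-balls of radius $s_{j+1}$, the tree of $X$ embeds (level- and parent-preservingly) into that of $U$, yielding an isometric embedding $X\hookrightarrow U$, which I fix.

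Next, $U$ is ultrahomogeneous: a partial isometry $p$ of $U$ preserves, for points of its domain, the level at which their branches diverge, so it induces a coherent family of partial bijections between the child sets of the tree vertices it meets and those of their images; since each level-$j$ vertex has exactly $m_j$ children, each of these partial bijections extends to a full one, and arbitrary such choices at all vertices produce a tree automorphism, i.e.\ an isometry of $U$, extending $p$. In particular, taking $\phi(p)$ to be any such extension, $(U,\phi)$ is a compact ultrametric S-extension of $X$ --- which is already the last sentence of the theorem.

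For the remaining two clauses of ultraextensiveness I need the ultrametric forms of the finite theory: (i) every finite ultrametric space $F$ has a finite minimal S-extension which is again ultrametric, with distance set contained in that of $F$ and with branching (number of sub-balls) at every scale no larger than that of $F$; and (ii) an analogous refinement of the coherence theorem, which moreover can be carried out over a prescribed ultrametric amalgam of the two old spaces. For (i) one checks either that the canonical construction in our proof of Theorem~\ref{thm_S} preserves the ultrametric inequality on ultrametric input, or, directly, that the ``regularization'' of $F$ --- the tree of $F$ with every branch prolonged to the common maximal depth and every level-$j$ vertex filled up to the common maximal number of children occurring in $F$ --- is a finite minimal S-extension of $F$; and (ii) is obtained by the same bookkeeping applied to the coherence construction. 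Granting (i) and (ii): if $F\subseteq U$ is finite, its branching at every scale is at most the corresponding $m_j$, hence so is that of its finite minimal ultrametric S-extension $Y$, and (its distance set lying in $S$) $Y$ embeds isometrically into $U$; by ultrahomogeneity this embedding may be taken to be the identity on $F$. Likewise, given finite $X_1\subseteq X_2\subseteq U$ and a finite minimal S-extension $(Y_1,\phi_1)$ of $X_1$ with $X_1\subseteq Y_1\subseteq U$, run (ii) starting from the amalgam $Y_1\cup X_2\subseteq U$ with the metric inherited from $U$, obtaining a finite minimal S-extension $(Y_2,\phi_2)$ of $X_2$, coherent with $(Y_1,\phi_1)$, with $Y_1\cup X_2\subseteq Y_2$, ultrametric, with distance set in $S$, and branching bounded by that of $Y_1\cup X_2$ and hence by the $m_j$; then $Y_2$ embeds into $U$, and since its embedding restricted to $Y_1\cup X_2$ is a partial isometry of $U$, ultrahomogeneity lets us make it the identity on $Y_1\cup X_2$, in particular on $Y_1$ and on $X_2$. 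Hence $U$ is ultraextensive.

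I expect the real work to be precisely (i) and (ii): running the finite minimal and coherent S-extension constructions so that they remain within the ultrametric category with the stated control on distance set and branching, and so that the coherent version can be performed relative to an already-placed copy of the old data $Y_1\cup X_2$ inside $U$ --- over the amalgam inherited from $U$, not an abstract one --- so that the resulting isometric embedding into $U$ simultaneously extends all the prescribed data. The other ingredients (the tree description of $U$, its ultrahomogeneity, and the bound on the branching of subsets of $U$) are routine.
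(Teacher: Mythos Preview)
Your approach is correct and genuinely different from the paper's. The paper does not write down a single target space in advance; instead it picks an increasing chain of $\tfrac{1}{2^k}$-nets $X_k\subseteq X$, uses Proposition~\ref{ultraextensiveultrametric} and Lemma~\ref{coherentultrametric} to produce a coherent chain of finite ultraextensive ultrametric spaces $Y_k$ with $Y_k$ a $\tfrac{1}{2^k}$-net in $Y_{k+1}$, and takes $Y$ to be the completion of $\bigcup_k Y_k$. Ultrahomogeneity of $Y$ is then obtained by an approximation-and-compactness argument in $\mbox{Iso}(Y)$, and clauses (ii)--(iii) by showing that any finite $A\subseteq Y$ sits inside a perturbed copy of some $Y_k$ (replace the nearby net points by the points of $A$), which is finite homogeneous and hence ultraextensive by Theorem~\ref{generalultrametric}.

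Your explicit product space $U=\prod_j\{1,\dots,m_j\}$ trades that limit argument for direct tree combinatorics: ultrahomogeneity is immediate, and compactness is free. The price is your steps (i) and (ii), the branching-controlled ultrametric versions of the finite S-extension and coherence theorems. One comment that will save you work: you do not need to rerun the algebraic coherence construction of Section~\ref{Coherent S-Extensions} in ultrametric form. Both (i) and (ii) follow cleanly from Theorem~\ref{generalultrametric}. For (i), the regularization $R(F)$ of $F$ is finite homogeneous ultrametric with the same distance set and branching equal to the levelwise maximum of $F$, hence ultraextensive, hence an S-extension of $F$; any minimal S-extension inside it inherits the bounds. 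For (ii), regularize $Y_1\cup X_2$ (with the metric inherited from $U$) to get a finite ultraextensive $Z$ with branching $\le m_j$; since $X_1\subseteq X_2\subseteq Z$ and $Y_1\subseteq Z$, clause~(iii) of ultraextensiveness \emph{of $Z$} hands you the coherent $(Y_2,\phi_2)$ with $Y_2\subseteq Z$. Then your tree-embedding argument places $Z$, hence $Y_2$, into $U$ over the given copy of $Y_1\cup X_2$. This is in fact the same ``finite ultraextensive intermediary'' trick the paper uses, just reached from the other direction.
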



The rest of the paper is organized as follows. In Section 2 we give some preliminaries of S-extensions, metrics on weighted graphs, and the profinite topology. In Section 3 we give the canonical algebraic construction of finite S-extensions in the style of Herwig--Lascar. In Section 4 we study finite minimal S-extensions and give a complete characterization of them. In Section 5 we  study the notion of coherent S-extensions and give a complete characterization of coherent S-extensions along with several constructions and applications. In Section 6 we study ultraextensive spaces and establish the main results mentioned above. In Section 7 we study compact ultrametric spaces and show that they admit compact ultrametric S-extensions. In Section 8 we mention some open problems.

\section{Preliminaries\label{Preliminaries}}

\subsection{S-extensions}

We fix some notations to be used in the rest of the paper. Let $(X, d_X)$ and $(Y,d_Y)$ be metric spaces. When there is no danger of confusion, we simply write $X$ for $(X, d_X)$ and $Y$ for $(Y, d_Y)$.

We say that $Y$ is an {\it extension} of $X$ if $X\subseteq Y$ and for all $x_1, x_2\in X$, $d_Y(x_1, x_2)=d_X(x_1, x_2)$. Interchangeably, we use the same terminology when $Y$ contains an isometric copy of $X$.

An {\it isometry} from $X$ to $Y$ is a bijection $\pi: X\to Y$ such that 
$$ d_Y(\pi(x_1),\pi(x_2))=d_X(x_1,x_2) $$
for all $x_1,x_2\in X$. An isometry from $X$ to $X$ is also called an {\it isometry of} $X$. The set of all isometries of $X$ is denoted as $\mbox{Iso}(X)$. Under composition of maps, $\mbox{Iso}(X)$ becomes a group.

A {\it partial isometry} of $X$ is an isometry between two finite subspaces of $X$. The set of all partial isometries of $X$ is denoted as $\mathcal{P}(X)$. $\mathcal{P}(X)$ is not necessarily a group, but it is a groupoid. In particular, for each $p\in \mathcal{P}(X)$ we can speak of the inverse map $p^{-1}$, which is still a partial isometry. 

If $Y$ is an extension of $X$, then every partial isometry of $X$ is also a partial isometry of $Y$. In symbols, we have $\mathcal{P}(X)\subseteq \mathcal{P}(Y)$ if $X\subseteq Y$.

If $p, q\in \mathcal{P}(X)$, we say that $q$ {\it extends} $p$, and write $p\subseteq q$, if 
$$\{ (x, p(x))\,:\, x\in\mbox{dom}(p)\}\subseteq \{ (x, q(x))\,:\, x\in\mbox{dom}(q)\}.$$

We let $1_X$ denote the identity isometry on $X$, i.e., $1_X(x)=x$ for all $x\in X$. Let $\mathcal{P}_X$ denote the set of all $p\in \mathcal{P}(X)$ such that $p\not\subseteq 1_X$. We refer to elements of $\mathcal{P}_X$ as {\it nonidentity partial isometries} of $X$. 

The main concept we study in this paper is that of a $P$-type S-extension.

\begin{defn}
Let $X$ be a finite metric space and fix a distinguished point $a_0\in X$. Let $P\subseteq \mathcal{P}_X$ be such that $P=P^{-1}$ and $X\setminus \{a_0\}\subseteq P(a_0)$. A {\it $P$-type S-extension} of $X$ is a pair $(Y, \phi)$, where $Y\supseteq X$ is an extension of $X$, and $\phi: P\to \mbox{Iso}(Y)$ such that $\phi(p)$ extends $p$ for all $p\in P$. The map $\phi$ is called a {\it $P$-type S-map} for $X$. 

When $P= \mathcal{P}_X$, we call $(Y, \phi)$ an {\it S-extension} of $X$ and $\phi$ an {\it S-map} for $X$.
\end{defn}

Note that an equivalent restatement of Solecki's theorem (Theorem~\ref{thm_S}) is that every finite metric space has a finite S-extension. It is well-known that the universal Urysohn space $\mathbb{U}$ is both universal (for all separable metric spaces) and ultrahomogeneous. These imply that every separable metric space has an S-extension $(Y, \phi)$ where $Y$ is isometric with $\mathbb{U}$. 

We will need the following notion of isomorphism between $P$-type S-extensions.

\begin{defn} Let $X$ be a metric space and $(Y, \phi)$ and $(Z, \psi)$ be both $P$-type S-extensions of $X$. An {\it isomorphism} between $(Y, \phi)$ and $(Z, \psi)$ is an isometry $\pi: Y\to Z$ such that $\psi(p)\circ\pi=\pi\circ\phi(p)$ for all $p\in P$. If there is an isomorphism between $(Y, \phi)$ and $(Z, \psi)$, we say that $(Y, \phi)$ and $(Z,\psi)$ are {\it isomorphic}, and write $(Y, \phi)\cong (Z,\psi)$.
\end{defn}

\subsection{Metrics on weighted graphs}
We will study metric spaces derived from weighted graphs. A {\it weighted graph} is a pair $(\Gamma, w)$, where $\Gamma=(V(\Gamma), E(\Gamma))$ is a (simple undirected) graph and $w: E(\Gamma)\to \mathbb{R}_+$ ($\mathbb{R}_+$ denotes the set of all positive real numbers). We call $w$ the {\it weight function}. If $w_1, w_2$ are two weight functions on $\Gamma$, then we write $w_1\leq w_2$ if $w_1(x,y)\leq w_2(x,y)$ for all $(x,y)\in E(\Gamma)$.

Given a weighted graph $(\Gamma, w)$, let $L_w=\inf\{w(x,y)\,:\, (x, y)\in E(\Gamma)\}$ and $B_w=\sup\{w(x,y)\,:\, (x,y)\in E(\Gamma)\}$. Assuming $0<L_w\leq B_w<\infty$, one can define a {\it path metric} $d_w$ on $V(\Gamma)$ as follows: for any $x,y \in V(\Gamma)$, let
$$
d_w(x,y)=\min\{B_w, \delta_w(x,y)\} $$
where
$$ \delta_w(x,y)=\inf\left\{\sum_{i=1}^n w(x_i,x_{i+1}) \, : \,  x_1=x, \ x_{n+1}=y, \forall i\leq n\ (x_i,x_{i+1})\in E(\Gamma)\right\}.
$$
In particular, $\delta_w(x,y)$ is undefined when $x$ and $y$ are not connected by a path in $\Gamma$, in which case $d_w(x,y)=B_w$. It is easy to verify that $d_w$ is indeed a metric. Also, if $(x,y)\in E(\Gamma)$, then $d_w(x,y)=\delta_w(x,y)$. We note the following simple fact about $d_w$ without proof.

\begin{lem}\label{reduced} The following are equivalent:
\begin{enumerate}
\item[(i)] For any $(x,y)\in E(\Gamma)$, $d_w(x,y)=w(x,y)$.
\item[(ii)] For any $(x,y)\in E(\Gamma)$ and any $x_1, \dots, x_{n+1}$ where $x_1=x$, $x_{n+1}=y$, and $(x_i,x_{i+1})\in E(\Gamma)$ for all $i=1,\dots, n$, we have
$$ w(x,y)\leq \sum^n_{i=1}w(x_i,x_{i+1}). $$
\end{enumerate}
\end{lem}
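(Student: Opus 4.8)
The plan is to prove the two implications directly, using only the definition of the path metric $d_w$ and the single nontrivial property of $d_w$ that we already know, namely that it is a metric (so the triangle inequality is available). The cutoff by $B_w$ in the definition of $d_w$ will turn out to be harmless, because we only ever compare $d_w$ to the weight of an edge, and $w(e)\le B_w$ for every edge $e$.

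For $(ii)\Rightarrow(i)$, I would first record the observation already noted in the text: if $(x,y)\in E(\Gamma)$, then the one-step path $x_1=x$, $x_2=y$ shows $\delta_w(x,y)\le w(x,y)\le B_w$, hence $d_w(x,y)=\min\{B_w,\delta_w(x,y)\}=\delta_w(x,y)$. Now condition (ii) says exactly that every path in $\Gamma$ from $x$ to $y$ has weight-sum at least $w(x,y)$; taking the infimum over all such paths gives $\delta_w(x,y)\ge w(x,y)$. Combining the two inequalities, $d_w(x,y)=\delta_w(x,y)=w(x,y)$, which is (i).

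For $(i)\Rightarrow(ii)$, fix an edge $(x,y)$ and a path $x_1=x,\dots,x_{n+1}=y$ with $(x_i,x_{i+1})\in E(\Gamma)$ for all $i$. Since $d_w$ is a metric, iterating the triangle inequality gives $d_w(x,y)\le\sum_{i=1}^n d_w(x_i,x_{i+1})$. For each $i$, because $(x_i,x_{i+1})$ is an edge, $d_w(x_i,x_{i+1})\le\delta_w(x_i,x_{i+1})\le w(x_i,x_{i+1})$, again using the one-step path. Finally, applying hypothesis (i) on the left-hand side, $w(x,y)=d_w(x,y)\le\sum_{i=1}^n w(x_i,x_{i+1})$, which is (ii).

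There is no genuine obstacle here: the argument is essentially the unwinding of the definition of $d_w$ in combination with the triangle inequality, which is precisely why the lemma can be stated without proof. The only point that requires a moment's care is making sure the $\min$ with $B_w$ never interferes — but since every edge weight is $\le B_w$, the expression $\min\{B_w,\delta_w(x,y)\}$ reduces to $\delta_w(x,y)$ whenever $(x,y)$ is an edge, and that is the only situation in which we evaluate $d_w$ in the proof.
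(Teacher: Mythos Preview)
Your proof is correct. The paper explicitly states this lemma without proof (``We note the following simple fact about $d_w$ without proof''), so there is no approach to compare against; your argument is exactly the routine unwinding of the definition of $d_w$ combined with the triangle inequality that justifies the paper's omission.
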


We introduce some new concepts about the consistency of metrics on weighted graphs.

\begin{defn} Let $(\Gamma, w)$ be a weighted graph and $d$ be a metric on $V(\Gamma)$. 
We say that $d$ is {\it consistent} with $w$ if for all $(x, y)\in E(\Gamma)$, $d(x,y)=w(x,y)$. 
We say that $w$ is {\it reduced} if $d_w$ is consistent with $w$.
\end{defn}

\begin{lem} Let $(\Gamma, w)$ be a connected weighted graph. Then there is a maximal reduced weight function $w^*$ on $\Gamma$ with $w^*\leq w$.
\end{lem}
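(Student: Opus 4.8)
The plan is to show that the maximal reduced weight function is simply the path metric restricted to the edge set: set $w^*(x,y)=d_w(x,y)$ for every $(x,y)\in E(\Gamma)$, and verify the three required properties, namely that $w^*$ is a legitimate weight function, that $w^*$ is reduced and satisfies $w^*\le w$, and that $w^*$ dominates every reduced $w'$ with $w'\le w$.

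First I would check that $w^*$ is a weight function in the sense needed for the path metric $d_{w^*}$ to be defined, i.e.\ $0<L_{w^*}\le B_{w^*}<\infty$. Since $\Gamma$ is simple, $x\ne y$ for each edge $(x,y)$, so $w^*(x,y)=d_w(x,y)>0$; more precisely, every term in the sum defining $\delta_w$ is at least $L_w$ and there is at least one term, so $\delta_w(x,y)\ge L_w$, whence $d_w(x,y)=\min\{B_w,\delta_w(x,y)\}\ge L_w$, giving $L_{w^*}\ge L_w>0$; likewise $B_{w^*}\le B_w<\infty$. The bound $w^*\le w$ is immediate from the one-edge path, which gives $d_w(x,y)\le\delta_w(x,y)\le w(x,y)$. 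To see that $w^*$ is reduced I would apply Lemma~\ref{reduced} to the weighted graph $(\Gamma,w^*)$: it suffices to check its condition (ii), i.e.\ $w^*(x,y)\le\sum_{i=1}^n w^*(x_i,x_{i+1})$ for every edge $(x,y)$ and every path $x=x_1,\dots,x_{n+1}=y$ in $\Gamma$. Since $d_w$ is a metric on $V(\Gamma)$, the repeated triangle inequality yields $d_w(x,y)\le\sum_{i=1}^n d_w(x_i,x_{i+1})$, and each $(x_i,x_{i+1})$ being an edge, this is precisely the desired inequality. Hence $d_{w^*}$ is consistent with $w^*$, i.e.\ $w^*$ is reduced.

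For maximality, let $w'$ be any reduced weight function on $\Gamma$ with $w'\le w$. Being reduced, $w'(x,y)=d_{w'}(x,y)$ for every edge $(x,y)$. On the other hand $w'\le w$ forces $\delta_{w'}\le\delta_w$ (for each fixed path the corresponding sums compare term by term) and $B_{w'}\le B_w$, hence $d_{w'}\le d_w$ pointwise; restricting to an edge $(x,y)$ then gives $w'(x,y)=d_{w'}(x,y)\le d_w(x,y)=w^*(x,y)$. Thus $w^*$ is in fact the largest reduced weight function below $w$, so in particular it is maximal. I do not anticipate a genuine obstacle here; the one point to watch is that the truncation by $B_w$ in the definition of $d_w$ could a priori interfere with identifying $w^*$ with $\delta_w$ on edges, but since $\delta_w(x,y)\le w(x,y)\le B_w$ for an edge, the truncation is never active on edges, which is exactly what makes both the reducedness and the maximality arguments go through cleanly.
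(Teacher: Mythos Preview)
Your proof is correct and follows essentially the same approach as the paper: define $w^*=d_w$ on edges, check $w^*\le w$, verify reducedness via Lemma~\ref{reduced}, and establish maximality by comparing $d_{w'}\le d_w$ for any reduced $w'\le w$. Your argument for reducedness is in fact slightly cleaner than the paper's, which uses an $\epsilon$-approximation argument unfolding the definition of $\delta_w$ rather than appealing directly to the triangle inequality for the metric $d_w$.
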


\begin{proof}
For all $(x,y)\in E(\Gamma)$, define $w^*(x,y)=d_w(x,y)=\delta_w(x,y)$. Then $w^*\leq w$. To see that $w^*$ is reduced we use Lemma~\ref{reduced} and consider $(x,y)\in E(\Gamma)$. Suppose $x_1, \dots, x_{n+1}\in V(\Gamma)$ with $x_1=x$, $x_{n+1}=y$, and $(x_i,x_{i+1})\in E(\Gamma)$ for all $i=1,\dots, n$. Let $\epsilon>0$. For each $i=1,\dots, n$, let $x_i^1=x_i, x_i^2,\dots, x_i^{k_i+1}=x_{i+1}\in V(\Gamma)$ with $(x_i^j, x_i^{j+1})\in E(\Gamma)$ for all $j=1,\dots, k_i$ be such that
$$ w^*(x_i,x_{i+1})=d_w(x_i,x_{i+1})\leq \sum_{j=1}^{k_i}w(x_i^j,x_i^{j+1})\leq w^*(x_i,x_{i+1})+\epsilon/n. $$
Then 
$$w^*(x,y)=d_w(x,y)\leq \sum_{i=1}^n\sum_{j=1}^{k_i} w(x_i^j, x_i^{j+1})\leq \sum_{i=1}^n w^*(x_i,x_{i+1})+\epsilon. $$
Since $\epsilon$ is arbitrary, we have that 
$$ w^*(x,y)\leq \sum_{i=1}^n w^*(x_i,x_{i+1}). $$
Thus $w^*$ is reduced. For the maximality of $w^*$, assume $u\leq w$ is a reduced weighted function. Then for all $(x,y)\in E(\Gamma)$, $u(x,y)=d_u(x,y)\leq d_w(x,y)=w^*(x,y)$.
\end{proof}

We can always turn a metric space into a weighted graph. If $(X, d)$ is a metric space, for any $x, y\in X$ with $x\neq y$, we add an edge between $x$ and $y$ with weight $w_d(x,y)=d(x,y)$. Then $(X, w_d)$ is a connected weighted graph and $w_d$ is a reduced weight function.

We will also consider pseudometrics on weighted graphs. 

\begin{defn} Let $(\Gamma, w)$ be a weighted graph and $\rho$ be a pseudometric on $V(\Gamma)$. We say that $\rho$ is {\it consistent} with $w$ if for all $(x,y)\in E(\Gamma)$, $\rho(x,y)=w(x,y)$.
\end{defn}

When a weight function $w$ satisfies $B_w<\infty$ and $L_w=0$, one can similarly define a {\it path pseudometric} $d_w$ and the distance function $\delta_w$ the same way as above. The resulting path pseudometric is consistent with $w$.

\begin{defn} Let $(M,\rho)$ be a pseudometric space. An {\it isometry} of $(M,\rho)$ is a map $\varphi: M\to M$ such that for all $x,y\in M$, $\rho(\varphi(x), \varphi(y))=\rho(x,y)$. If $G$ is a set of isometries of $(M,\rho)$, we say that $\rho$ is {\it $G$-invariant}.
\end{defn}

Any pseudometric space $(M, \rho)$ has a metric identification defined as follows. Let $\sim$ be an equivalence relation defined on $M$ by $x\sim y$ iff $\rho(x,y)=0$. For each $x\in M$, let $[x]_\sim$ denote the $\sim$-equivalence class of $x$. Then we can define $\overline{M}=\overline{M}^{\rho}=M/\sim$ and a metric $\overline{\rho}$ on $\overline{M}$ by $\overline{\rho}([x]_\sim, [y]_\sim)=\rho(x,y)$ for all $x, y\in M$. $(\overline{M},\overline{\rho})$ is called the {\it metric identification} of $(M, \rho)$. If $\varphi$ is an isometry of $(M, \rho)$, then we can define $\overline{\varphi}: \overline{M}\to\overline{M}$ by $\overline{\varphi}([x]_\sim)=[\varphi(x)]_\sim$ for all $x\in M$. Then $\overline{\varphi}$ is an isometry of $(\overline{M},\overline{\rho})$. Suppose $G$ is a set of isometries of $(M,\rho)$, then $\overline{G}=\overline{G}^{\rho}=\{\overline{\varphi}\,:\, \varphi\in G\}$ is a set of isometries of $(\overline{M},\overline{\rho})$. We note that if $G$ is a group, then $\overline{G}$ is also a group.

Let $(\Gamma,w)$ be a weighted graph and $\rho$ be a pseudometric on $V(\Gamma)$ consistent with $w$. Let $(\overline{\Gamma},\overline{\rho})$ denote the metric identification of the pseudometric space $(\Gamma, \rho)$. For $(x,y)\in E(\Gamma)$, define $\overline{w}([x]_\sim, [y]_\sim)=w(x,y)$. Then $(\overline{\Gamma}, \overline{w})$ is a weighted graph and $\overline{\rho}$ is consistent with $\overline{w}$. We will need this construction in the subsequent sections.

\subsection{The profinite topology} One of the main tools we will be using is Ribes--Zalesskii theorem \cite{RZ} on the profinite topology on an abstract group. Recall that if $G$ is an abstract group, 
the {\it profinite topology} on $G$ is the topology generated by all cosets of normal subgroups of finite index, that is, it has as a basis of open subsets all cosets of normal subgroups of finite index. 

\begin{thm}[Ribes--Zalesskii \cite{RZ}]\label{thmRZ}
Let $\mathbb{F}$ be an abstract free group and $H_1,\dots,H_n$ be finitely generated subgroups of $\mathbb{F}$. Then $H_1\cdots H_n$ is closed in the profinite topology.
\end{thm}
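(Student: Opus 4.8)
This is the celebrated theorem of Ribes and Zalesskii, which we use as a black box; here we only sketch the line of its proof, referring to \cite{RZ} for the details. The plan is to pass to the profinite completion $\widehat{\mathbb F}$ and reduce to a purely combinatorial statement about products inside it. Since free groups are residually finite, $\mathbb F$ embeds into $\widehat{\mathbb F}$ and the profinite topology on $\mathbb F$ is exactly the subspace topology inherited from $\widehat{\mathbb F}$; hence a set $S\subseteq\mathbb F$ is profinitely closed if and only if $S=\overline{S}\cap\mathbb F$, where $\overline{S}$ denotes the closure in $\widehat{\mathbb F}$. So it suffices to show $\overline{H_1\cdots H_n}\cap\mathbb F=H_1\cdots H_n$. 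First, each $\overline{H_i}$ is compact, so $\overline{H_1}\cdots\overline{H_n}$ is compact, hence closed in $\widehat{\mathbb F}$, and a routine net argument (using joint continuity of multiplication) shows it equals $\overline{H_1\cdots H_n}$. Second, by Marshall Hall's theorem that finitely generated subgroups of a free group are closed in the profinite topology --- in fact, that the profinite topology of $\mathbb F$ induces on each $H_i$ its full profinite topology --- the closure $\overline{H_i}$ is canonically the profinite completion $\widehat{H_i}$, which is moreover a free profinite group, $H_i$ being free by the Nielsen--Schreier theorem. The goal thus becomes the identity $\widehat{H_1}\cdots\widehat{H_n}\cap\mathbb F=H_1\cdots H_n$ inside $\widehat{\mathbb F}$; the inclusion $\supseteq$ is trivial, so the content is
$$ \widehat{H_1}\cdots\widehat{H_n}\cap\mathbb F\ \subseteq\ H_1\cdots H_n . $$

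For this I would invoke profinite Bass--Serre theory. Consider the graph of groups which is a star with a central vertex carrying the trivial group, $n$ leaves carrying $H_1,\dots,H_n$, and trivial edge groups; its fundamental group is the free product $H_1*\cdots*H_n$, and the inclusions $H_i\hookrightarrow\mathbb F$ induce a canonical homomorphism $\theta\colon H_1*\cdots*H_n\to\mathbb F$. Passing to profinite completions gives the free profinite product $\widehat{H_1}\amalg\cdots\amalg\widehat{H_n}$, which acts on the profinite tree $T$ that is the (profinite) Bass--Serre tree of this decomposition; the point stabilizers for this action are exactly the conjugates of the $\widehat{H_i}$, and $\theta$ completes to $\widehat{\theta}\colon\widehat{H_1}\amalg\cdots\amalg\widehat{H_n}\to\widehat{\mathbb F}$. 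Given $g=\widehat{h}_1\cdots\widehat{h}_n\in\mathbb F$ with $\widehat{h}_i\in\widehat{H_i}$, I would lift it to the element $\widehat{h}_1\cdots\widehat{h}_n$ of $\widehat{H_1}\amalg\cdots\amalg\widehat{H_n}$ and examine how the partial products $\widehat{h}_1,\ \widehat{h}_1\widehat{h}_2,\ \dots$ move a chosen base vertex through $T$; because $g$ lies in the \emph{dense} subgroup $\mathbb F$ while the edge and vertex stabilizers encountered along the way are controlled by the $H_i$ and their conjugates, one can successively replace each $\widehat{h}_i$ by an honest element $h_i\in H_i$ without changing the product, obtaining $g=h_1\cdots h_n\in H_1\cdots H_n$.

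The hard part --- and the only genuinely new ingredient beyond the reductions above --- is the profinite Bass--Serre machinery: developing the theory of profinite trees and of profinite groups acting on them, verifying that the tree $T$ above is indeed a profinite tree on which the free profinite product acts with the asserted free-factor stabilizers, and then running the geodesic analysis that turns membership in $\widehat{H_1}\cdots\widehat{H_n}$ into a genuine factorization over $\mathbb F$. This is precisely the substance of \cite{RZ} and is not reproved here. (Other proofs exist --- for instance one of Auinger and Steinberg using profinite semigroup methods, and the Herwig--Lascar extension theorem \cite{HL} is equivalent to a strengthening of it --- but all require machinery of comparable weight, which is why we simply quote the result.)
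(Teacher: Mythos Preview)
The paper does not prove this theorem at all: it is stated as a known result from the literature (Ribes--Zalesskii \cite{RZ}) and used as a black box, exactly as you yourself indicate in your opening sentence. So there is no ``paper's own proof'' to compare against; your sketch of the profinite-completion reduction and the profinite Bass--Serre argument is already more than the paper offers, and is a reasonable high-level summary of the original approach in \cite{RZ}.
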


A group $G$ is said to have {\it property RZ} if for any finitely generated subgroups $H_1, \dots, H_n$ of $G$, $H_1\cdots H_n$ is closed in the profinite topology of $G$. All groups with property RZ are residually finite. We will also use the following theorem of Coulbois \cite{C}.
\begin{thm}[Coulbois \cite{C}]\label{Coulbois}
If $G_1$ and $G_2$ have property RZ, then so does the free product $G_1*G_2$.
\end{thm}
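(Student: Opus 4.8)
The plan is to translate property RZ into a statement about closures in the profinite completion $\widehat G$ of $G:=G_1*G_2$, and then verify that statement by letting $G$ act on its Bass--Serre tree and transporting the picture to the profinite level. First I would record the standard reformulation. Since $G_1$ and $G_2$ have property RZ they are residually finite, hence so is $G$ (Gruenberg), so $G$ embeds in $\widehat G$ and the profinite topology of $G$ is the subspace topology; thus a subset $S\subseteq G$ is profinitely closed iff $S=\overline S\cap G$, where $\overline{(\cdot)}$ denotes closure in $\widehat G$. In the compact group $\widehat G$ one has $\overline{AB}=\overline A\cdot\overline B$ for arbitrary $A,B$ (the product of two compact sets is compact, hence closed; conversely, convergent nets multiply), so $\overline{H_1\cdots H_n}=\overline{H_1}\cdots\overline{H_n}$ comes for free. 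Hence the theorem reduces to proving
$$\bigl(\overline{H_1}\cdots\overline{H_n}\bigr)\cap G=H_1\cdots H_n$$
for all finitely generated $H_1,\dots,H_n\le G$, and only the inclusion ``$\subseteq$'' needs an argument.

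Next I would set up the tree machinery. The free product $G$ acts on its Bass--Serre tree $T$ with trivial edge stabilizers and vertex stabilizers the conjugates of $G_1$ and $G_2$. One identifies $\widehat G$ with the free profinite product $\widehat{G_1}\amalg\widehat{G_2}$ (both groups have the same continuous finite quotients, by the universal properties of $*$ and of $\amalg$), and this profinite group acts on the associated standard profinite tree $\widehat T$, again with trivial edge stabilizers and vertex stabilizers the conjugates of $\widehat{G_1}$ and $\widehat{G_2}$; since free factors are separable in a free product of residually finite groups, $T$ embeds $G$-equivariantly and densely in $\widehat T$. The hypothesis enters here: because $G_i$ has property RZ, its own profinite topology coincides with the subspace topology from $\widehat G$, so closures of finitely generated subgroups of $G_i$, and products thereof, computed in $\widehat{G_i}$ agree with those computed in $\widehat G$ and still meet $G_i$ in the intended set.

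With this in place I would treat a single finitely generated $H\le G$ via the Kurosh subgroup theorem: $H\cong F_H*\bigl(\ast_j A_j^{t_j}\bigr)*\bigl(\ast_k B_k^{s_k}\bigr)$ with $F_H$ a finitely generated free group and $A_j\le G_1$, $B_k\le G_2$ finitely generated, and $H$ acts cocompactly on its minimal invariant subtree $T_H\subseteq T$. Combining the structure theory of profinite groups acting on profinite trees with trivial edge stabilizers (closedness of stabilizers; the free-product form is not destroyed ``at infinity'') with the previous step and with the Ribes--Zalesskii theorem quoted above applied to the free part $F_H$, one identifies $\overline H$ with the free profinite product $\overline{F_H}\amalg\bigl(\amalg_j\overline{A_j}^{t_j}\bigr)\amalg\bigl(\amalg_k\overline{B_k}^{s_k}\bigr)$ and deduces $\overline H\cap G=H$ --- the case $n=1$. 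For general $n$, given $g\in G$ with $g=\gamma_1\cdots\gamma_n$ and $\gamma_i\in\overline{H_i}$, approximate each $\gamma_i$ by a net $h_i^{(\alpha)}\in H_i$ and track the reduced forms of the $h_i^{(\alpha)}$ over the two factors along $\widehat T$; a bounded-cancellation/continuity lemma for the length function on $\widehat T$ then shows that the reduced form of $g$ is assembled from those of the $\gamma_i$ with only bounded interaction, forcing $g\in H_1\cdots H_n$, which gives ``$\subseteq$''.

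The hard part, as always in arguments of this flavour, is the last point: the bounded-cancellation statement for products of reduced words after passing to the profinite completion. Mere residual finiteness of $G_1$ and $G_2$ does not suffice here; this is exactly where property RZ of the factors is used --- equivalently, where one needs the closures of finitely generated subgroups of $\widehat{G_1}$ and $\widehat{G_2}$ to behave well under multiplication and the vertex stabilizers of $\widehat T$ to be closed. An alternative I would also consider replaces the profinite-tree technology by a combinatorial ``graphs of groups with finite fibres'' argument generalizing the Stallings-graph proof of the free-group case; there the same difficulty resurfaces as proving that a suitable folding process terminates in a finite quotient that separates $g$ from $H_1\cdots H_n$.
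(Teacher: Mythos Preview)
The paper does not prove this theorem. Theorem~\ref{Coulbois} is stated as a result of Coulbois \cite{C} and is cited without proof; it is used as a black box in Section~\ref{Coherent S-Extensions}. So there is no ``paper's own proof'' to compare your proposal against.

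As for the proposal itself: your outline is broadly the right strategy and is in the spirit of Coulbois' original argument (reduce to a statement about closures in $\widehat G$, realize $\widehat G$ as the free profinite product of the $\widehat{G_i}$, and analyze the situation via the action on a profinite tree), but as written it is not a proof. You yourself flag the key step---the ``bounded-cancellation/continuity lemma'' for products of reduced words after passing to the profinite level---as unproved, and that step is essentially the entire content of the theorem. Everything before it (residual finiteness of $G$, the identity $\overline{H_1\cdots H_n}=\overline{H_1}\cdots\overline{H_n}$ in a compact group, the Kurosh decomposition of a finitely generated $H\le G$) is standard and reduces the problem correctly, but none of it uses property RZ of the factors in a nontrivial way. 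The substance lies in showing that an element $g\in G$ lying in $\overline{H_1}\cdots\overline{H_n}$ actually lies in $H_1\cdots H_n$; your net-approximation sketch does not explain why the reduced forms stabilize or why the interaction is bounded, and an honest argument here requires the profinite-tree machinery (or an equivalent combinatorial device) in detail, not just in name. If you intend to supply a genuine proof, that lemma is where the work must be done.
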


Herwig--Lascar \cite{HL} used the Ribes--Zalesskii theorem in their study of the extension problems. This approach was explored further by Rosendal \cite{R} \cite{R2} to study extension problems for isometries, who showed that the Ribes--Zalesskii property for a group $G$ is equivalent to an extension property for actions of $G$ by isometries.

\begin{defn}
Let $G$ be a group acting by isometries on a metric space $(X,d_X)$. We say that the action is {\it finitely approximable} if for any finite $A\subseteq X$ and finite $F\subseteq G$ there is a finite metric space $(Y,d_Y)$, on which $G$ acts by isometries, and an isometry $\pi: A\rightarrow Y$ such that whenever $g\in F$ and $x,gx\in A$, then $\pi(gx)=g\pi (x).$ 
\end{defn}

\begin{thm}[Rosendal \cite{R}]
The following are equivalent for a countable discrete group $G$:
\begin{enumerate}
\item $G$ has property RZ;

\item Any action of $G$ by isometries on a metric space is finitely approximable.
\end{enumerate}
\end{thm}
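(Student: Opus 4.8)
The plan is to prove $(1)\Rightarrow(2)$ by a profinite-quotient construction in the spirit of Herwig--Lascar, using property RZ to supply a suitable finite-index subgroup, and to prove $(2)\Rightarrow(1)$ by converting a failure of RZ into an isometric action of $G$ that fails to be finitely approximable. It is convenient to use the standard reformulation of property RZ: $G$ has property RZ if and only if for all finitely generated $H_1,\dots,H_n\le G$ and every $g_0\in G\setminus H_1\cdots H_n$ there is a finite-index normal subgroup $N\trianglelefteq G$ with $g_0\notin H_1\cdots H_nN$ --- immediate from the fact that the basic open sets of the profinite topology are exactly the cosets of finite-index normal subgroups.

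For $(1)\Rightarrow(2)$, fix an isometric action of $G$ on $(X,d_X)$, a finite $A=\{a_1,\dots,a_k\}\subseteq X$, and a finite symmetric $F\ni 1$. Pull $d_X$ back along $(g,a_i)\mapsto g\cdot a_i$ to a $G$-invariant pseudometric $\rho$ on $G\times A$, with $G$ acting on the first coordinate; the partial action is automatically recorded, since $\rho\big((f,a_i),(1,a_j)\big)=0$ whenever $f\in F$ and $f\cdot a_i=a_j\in A$. For a finite-index normal $N\trianglelefteq G$, let $\bar\rho$ be the quotient pseudometric of $(G\times A,\rho)$ under the (isometric) action of $N$ on $G\times A$, let $Y$ be its metric identification --- a finite metric space, since $[G:N]<\infty$ and $A$ is finite, carrying an isometric action of $G/N$ and hence of $G$ --- and let $\pi: A\to Y$ be given by $a_i\mapsto[(1,a_i)]$. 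For every such $N$, $\pi$ intertwines $F$ and satisfies $d_Y(\pi a_i,\pi a_j)\le\rho\big((1,a_i),(1,a_j)\big)=d_X(a_i,a_j)$; the whole content is that for a good choice of $N$ the passage to the quotient introduces no \emph{shortcut}, i.e.\ that $d_Y(\pi a_i,\pi a_j)\ge d_X(a_i,a_j)$ as well, so that $\pi$ is an isometric embedding and $(Y,\pi)$ witnesses finite approximability. Lifting a would-be shortcut for the pair $(a_i,a_j)$ to a path in $G\times A$ and reading off the elements of $F^{\pm1}$ that label its passages between $N$-cosets, one finds that such a shortcut can exist only if a certain group element $g_{ij}$ --- governed by the combinatorics of a minimal such path --- satisfies $g_{ij}\in H^{(ij)}_1\cdots H^{(ij)}_rN$ for certain finitely generated subgroups $H^{(ij)}_1,\dots,H^{(ij)}_r$ of $G$ built from $F$ and the incidence of $F$ on $A$. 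Since $X$ is a genuine metric space one has $g_{ij}\notin H^{(ij)}_1\cdots H^{(ij)}_r$, so by the Ribes--Zalesskii property of $G$, that product being closed in the profinite topology, the reformulation furnishes a finite-index normal subgroup separating $g_{ij}$ from it; intersecting these over the finitely many pairs yields a single finite-index normal $N$ that admits no shortcut. The bookkeeping that reduces ``no shortcut'' to the finitely many coset memberships $g_{ij}\notin H^{(ij)}_1\cdots H^{(ij)}_r$ --- the analogue of the Herwig--Lascar analysis --- is the main technical obstacle.

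For $(2)\Rightarrow(1)$, fix finitely generated $H_1,\dots,H_n\le G$ and $g_0\in G\setminus H_1\cdots H_n$; by the reformulation it suffices to produce a finite-index normal $N$ with $g_0\notin H_1\cdots H_nN$. The carrier will be a ``levelled'' metric space $X$ whose vertices are the pairs $(xH_j,j)$ for $x\in G$ and $0\le j\le n$ (with $H_0=\{e\}$, so that level $0$ is a copy of $G$), with an edge of weight $1$ joining $(xH_{j-1},j-1)$ to $(xH_j,j)$ and the resulting path metric truncated at $n+1$. Then $G$ acts on $X$ by isometries via left translation on the group coordinate, and a direct length estimate shows that $d_X\big((e,0),(g_0H_n,n)\big)$ equals $n$ when $g_0\in H_1\cdots H_n$ --- realized by a path going monotonically up through the levels --- and equals $n+1$ otherwise. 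One then applies finite approximability to a finite $A$ containing $(e,0)$, $(g_0,0)$, $(g_0H_n,n)$ and suitable auxiliary points at the intermediate levels, and to a finite $F$ containing $g_0^{\pm1}$ and generating sets for the $H_j$, and takes $N$ to be the kernel of the resulting action of $G$ on the finite space $Y$ --- of finite index since $\mbox{Iso}(Y)$ is finite. The intertwining identities transport into $Y$ the distances $d_Y\big(\pi(e,0),\pi(g_0H_n,n)\big)=n+1$ and $d_Y\big(g_0\cdot\pi(e,0),\pi(g_0H_n,n)\big)=n$, and the aim is to show that these, together with the triviality of the $N$-action on $Y$, preclude any factorization $g_0=h_1\cdots h_n\nu$ with $h_i\in H_i$ and $\nu\in N$, whence $g_0\notin H_1\cdots H_nN$. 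This last step is the main obstacle: finite approximability only controls $Y$ near $\pi(A)$ and along $F$, so $X$, the auxiliary points of $A$, and the elements of $F$ must be arranged so that these few constraints already forbid such a factorization; the metric estimates on $X$ itself are routine once the construction is fixed.
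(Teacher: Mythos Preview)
The paper does not prove this theorem: it is quoted in Section~2.3 as a background result of Rosendal, with a citation to \cite{R}, and no argument is given. There is therefore no proof in the paper to compare your proposal against.

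For what it is worth, your outline is broadly the shape of Rosendal's own argument: the direction $(1)\Rightarrow(2)$ is indeed handled by a Herwig--Lascar style quotient, choosing a finite-index normal subgroup via property RZ so that the induced pseudometric on the finite quotient agrees with $d_X$ on $A$; and $(2)\Rightarrow(1)$ is handled by encoding the product $H_1\cdots H_n$ into distances in a coset graph on which $G$ acts by isometries. You correctly flag the two genuine technical points --- the ``no shortcut'' bookkeeping in the first direction, and the arrangement of $A$ and $F$ in the second so that the finitely many intertwining constraints already forbid a factorization $g_0\in H_1\cdots H_nN$ --- but as written these remain plans rather than proofs. If you want to complete the argument rather than cite it, you should consult \cite{R} directly; the paper under review makes no attempt to do so.
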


\section{Finite S-Extensions\label{finite}}

In this section we give a direct constructive proof of Solecki's theorem (Theorem~\ref{thm_S}) following the ideas of Herwig--Lascar \cite{HL}. We will see in the following sections that the construction we present here is in some sense canonical.


For the rest of this section we fix a finite metric space $X$, a distinguished point $a_0\in X$ and $P\subseteq \mathcal{P}_X$ where $P=P^{-1}$ and $X\setminus \{a_0\}\subseteq P(a_0)$. Recall that $\mathcal{P}_X$ is the set of all nonidentity partial isometries of $X$. Let $\mathbb{F}(P)$ be the free group generated by elements of $P$. For each $p\in P$, we identify the partial isometry $p^{-1}\in P$ with the formal inverse of $p$ in $\mathbb{F}(P)$. Thus any nonidentity element of $\mathbb{F}(P)$ is a finite word of the form $p_1\dots p_n$ with $p_1, \dots, p_n\in P$. We use $1$ to denote the identity element of $\mathbb{F}(P)$. Of course, $1$ can be identified with the identity isometry $1_X$. 

Let $H$ be the set of all finite words $p_1\cdots p_n$ with $p_1,\dots, p_n\in P$ such that $p_1\dots p_n(a_0)=p_1(p_2(\cdots p_n(a_0)\cdots))$ is defined and $p_1\dots p_n(a_0)=a_0$. Since $X$ is finite, $H$ is a finitely generated subgroup of 
$\mathbb{F}(P)$. 

Define $\Gamma=\mathbb{F}(P)/H$. We construct a weighted graph $(\Gamma, w)$ as follows: 
\begin{enumerate}
\item for every $p,q\in P\cup\{1\}$ such that $p(a_0)$ and $q(a_0)$ are defined, there is an edge between $pH$ and $qH$ with $w(pH, qH)=d_X(p(a_0),q(a_0))$, and

\item for every $g,g_1,g_2\in \mathbb{F}(P)$, if there is an edge between $g_1H$ and $g_2H$, then there is an edge between $gg_1H$ and $gg_2H$ with $w(gg_1H, gg_2H)=w(g_1H, g_2H)$.
\end{enumerate}
To see that $w$ is well-defined, first note that if $w(g_1H, g_2H)$ is defined then there are $p, q\in P$ with $p(a_0)$ and $q(a_0)$ defined, and $g\in \mathbb{F}(P)$ such that $g_1=gp$ and $g_2=gq$. In this case, $w(g_1H, g_2H)=w(pH, qH)=d_X(p(a_0),q(a_0))$. Thus, to verify that $w$ is well-defined, it suffices to make sure that if $p, q, r, s\in P$ and $d_X(p(a_0),q(a_0))\neq d_X(r(a_0),s(a_0))$, then there does not exist $g\in \mathbb{F}(P)$ such that
$gpH=rH$ and $gqH=sH$. Assume there is such a $g=p_1\dots p_n$. Then $r^{-1}gp, s^{-1}gq\in H$, and thus $r^{-1}p_1\dots p_np(a_0)=a_0$ and $s^{-1}p_1\dots p_nq(a_0)=a_0$. It follows that
$$ p_1\dots p_n(p(a_0))=r(a_0) \mbox{ and } p_1\dots p_n(q(a_0))=s(a_0). $$
Since all $p_1,\dots, p_n$ are partial isometries, we have $d_X(p(a_0), q(a_0))=d_X(r(a_0),s(a_0))$.

From the finiteness of $P$ and the definition of $w$, it is clear that $L_w=\inf\{w(x,y)\,:\, (x,y)\in E(\Gamma)\}>0$ and $B_w=\sup\{w(x,y)\,:\, (x,y)\in E(\Gamma)\}<\infty$. It follows that, equipped with the path metric $d_w$, $\Gamma$ becomes a metric space. When there is no danger of confusion, we use $\Gamma$ to denote the metric space $(\Gamma, d_w)$.

We claim that $\Gamma$ is essentially an extension of $X$. To see this, let $e: X\rightarrow \Gamma$ be defined by 
\[
e(a)=\begin{cases} 
                H, &\text{ if } a=a_0, \\
                pH, &\text{ where }p\in P\text{ and }p(a_0)=a,\text{  if } a\neq a_0. 
          \end{cases} \\
\]
To see that $e$ is well-defined, first note that for any $a\neq a_0$ there is $p\in P$ with $p(a_0)=a$. 
If $p, q\in P$ with $p(a_0)=q(a_0)$, then $p^{-1}q\in H$ and therefore $pH=qH$. Thus $e$ is well-defined. Furthermore, if $e(a)=pH=qH=e(b)$ and $p(a_0)=a$ and $q(a_0)=b$, then $p^{-1}q\in H$ and therefore $a=p(a_0)=q(a_0)=b$. This means that $e$ is one-to-one. To see that $e$ is an isometric embedding, we use the following lemmas.

\begin{lem}\label{lem_weak}
Let $(Y, \phi)$ be a $P$-type S-extension of $X$. Then there is $\pi:\Gamma \rightarrow Y$ such that $d_Y(\pi(g_1H),\pi(g_2H))=w(g_1H,g_2H)$ whenever $(g_1H,g_2H)\in E(\Gamma)$.
\end{lem}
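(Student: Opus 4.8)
The map $\pi:\Gamma\to Y$ should send the coset $gH$ to the point obtained by applying the isometries of $Y$ named by the word $g$ to the distinguished point $a_0$. Concretely, for $g=p_1\cdots p_n\in\mathbb{F}(P)$ I would set $\pi(gH)=\phi(p_1)\circ\cdots\circ\phi(p_n)(a_0)$, with the convention $\pi(H)=a_0$. The first task is to check this is well-defined on cosets: if $gH=g'H$ then $g^{-1}g'\in H$, so writing $g^{-1}g'=q_1\cdots q_m$ we have $q_1\cdots q_m(a_0)=a_0$ in $X$, and since each $\phi(q_i)$ extends $q_i$ this gives $\phi(q_1)\cdots\phi(q_m)(a_0)=a_0$ in $Y$; applying $\phi(p_1)\cdots\phi(p_n)$ then yields $\pi(gH)=\pi(g'H)$. (One also checks $H$ itself is well-defined as the neutral coset, and that the word length doesn't matter because $\phi$ is a homomorphism-like assignment in the sense that $\phi(p^{-1})=\phi(p)^{-1}$, which holds since $\phi(p)$ is an isometry extending $p$ and $p^{-1}\in P$.)

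**Verifying the edge condition.** Next I would verify the displayed identity $d_Y(\pi(g_1H),\pi(g_2H))=w(g_1H,g_2H)$ for edges. By the definition of $w$, an edge $(g_1H,g_2H)$ comes from a common left multiple: there are $p,q\in P\cup\{1\}$ with $p(a_0),q(a_0)$ defined and $g\in\mathbb{F}(P)$ with $g_1H=gpH$, $g_2H=gqH$, and $w(g_1H,g_2H)=d_X(p(a_0),q(a_0))$. Writing $g=p_1\cdots p_n$ and letting $\Phi=\phi(p_1)\circ\cdots\circ\phi(p_n)\in\mathrm{Iso}(Y)$ (with $\Phi=\mathrm{id}_Y$ if $g=1$), well-definedness gives $\pi(g_1H)=\Phi(\phi(p)(a_0))=\Phi(p(a_0))$ and likewise $\pi(g_2H)=\Phi(q(a_0))$ — here I use that $\phi(p)$ extends $p$ so $\phi(p)(a_0)=p(a_0)$, and that $p,q$ act on the point $a_0\in X$. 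Since $\Phi$ is an isometry of $Y$ and $p(a_0),q(a_0)\in X$ with $d_Y$ agreeing with $d_X$ on $X$, we get $d_Y(\pi(g_1H),\pi(g_2H))=d_Y(p(a_0),q(a_0))=d_X(p(a_0),q(a_0))=w(g_1H,g_2H)$, as required.

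**The main obstacle.** The delicate point is the well-definedness argument: it hinges on showing that the value $\phi(p_1)\circ\cdots\circ\phi(p_n)(a_0)$ depends only on the coset $gH$, not on the representative word. The key input is exactly the definition of $H$ as the set of words fixing $a_0$ in $X$, combined with the fact that each $\phi(q_i)$ literally extends $q_i$ (so it agrees with $q_i$ wherever $q_i$ is defined). There is a subtlety to handle carefully: when I "apply $\phi(q_1)\cdots\phi(q_m)$ to $a_0$ and get $a_0$," I must know each intermediate point $q_{i+1}\cdots q_m(a_0)$ is in fact in $\mathrm{dom}(q_i)$, i.e. that the word is a legitimate member of $H$ in the sense defined in the paper (composition defined at every stage); this is precisely how $H$ was defined, so it is available. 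Once well-definedness is in place, the rest is the short isometry computation above. Note that $\pi$ need not be injective or surjective — the lemma only asserts existence of a map respecting edge-weights — so no further work (e.g. on the path metric $d_w$) is needed; the stronger statement that $e$ is an isometric embedding will presumably follow from a companion lemma.
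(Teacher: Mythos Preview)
Your approach is exactly the paper's: extend $\phi$ to words by composition, set $\pi(gH)$ equal to the resulting isometry applied to $a_0$, check well-definedness via the definition of $H$, then verify the edge identity by writing an edge as $(gpH,gqH)$ and using that the composite isometry preserves distances.

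One caveat: your justification that $\phi(p^{-1})=\phi(p)^{-1}$ ``since $\phi(p)$ is an isometry extending $p$ and $p^{-1}\in P$'' is not valid as stated---both $\phi(p)^{-1}$ and $\phi(p^{-1})$ extend the partial isometry $p^{-1}$, but nothing forces these two isometries of $Y$ to coincide. The paper glosses over the same point here (``It is easy to see that $\pi$ is well-defined'') and addresses it explicitly at the start of Section~\ref{sec_minimal}: one may simply redefine $\phi$ on one member of each pair $\{p,p^{-1}\}$ to enforce the relation, which is harmless because the lemma only asks for the existence of \emph{some} $\pi$. With that small adjustment your argument is complete and matches the paper's.
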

\begin{proof}
We first expand $\phi: P\to \mbox{Iso}(Y)$ to a map $\psi: \mathbb{F}(P)\to \mbox{Iso}(Y)$ by
$$ \psi(p_1\dots p_n)=\phi(p_1)\circ \dots\circ \phi(p_n). $$
Define $\pi:\Gamma \rightarrow Y$ by $\pi(gH)=\psi(g)(a_0)$. It is easy to see that $\pi$ is well-defined. Now, if $(g_1H,g_2H)\in E(\Gamma)$, then there exist $p,q\in P\cup\{1\}$ and $g\in \mathbb{F}(P)$ such that $g_1=gp$, $g_2=gq$, and $p(a_0),q(a_0)$ are defined. We have
\begin{align*}
d_Y(\pi(g_1H),\pi(g_2H))&=d_Y(\pi(gpH),\pi(gqH))=d_Y(\psi(g)\circ\phi(p)(a_0),\psi(g)\circ\phi(q)(a_0))\\ &=d_Y(\phi(p)(a_0),\phi(q)(a_0))=d_X(p(a_0),q(a_0)) \\&=w(pH,qH)=w(gpH, gqH).
\end{align*}
\end{proof}

\begin{lem}\label{reducedw}
$w$ is reduced.
\end{lem}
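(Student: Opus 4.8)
The plan is to deduce reducedness of $w$ from the mere existence of a pseudometric on $V(\Gamma)$ that is consistent with $w$, and then to manufacture such a pseudometric by pulling back the metric of an ambient $P$-type S-extension of $X$ through the comparison map of Lemma~\ref{lem_weak}.

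First I would record the trivial half: for any edge $(x,y)\in E(\Gamma)$ the one-edge path witnesses $\delta_w(x,y)\le w(x,y)\le B_w$, hence $d_w(x,y)\le w(x,y)$ automatically. So by Lemma~\ref{reduced} it suffices to check $w(x,y)\le\sum_{i=1}^n w(x_i,x_{i+1})$ for every edge $(x,y)$ and every path $x_1=x,x_2,\dots,x_{n+1}=y$ with all $(x_i,x_{i+1})\in E(\Gamma)$. In turn, this follows at once from the triangle inequality as soon as we have a pseudometric $\rho$ on $V(\Gamma)$ with $\rho(x,y)=w(x,y)$ for every $(x,y)\in E(\Gamma)$: indeed $w(x,y)=\rho(x,y)\le\sum_{i=1}^n\rho(x_i,x_{i+1})=\sum_{i=1}^n w(x_i,x_{i+1})$.

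To produce $\rho$, I would use that the finite (hence separable) space $X$ embeds isometrically into the Urysohn space $\mathbb{U}$; since $\mathbb{U}$ is ultrahomogeneous, each $p\in P$ --- now a partial isometry of $\mathbb{U}$ --- extends to some $\phi(p)\in\mbox{Iso}(\mathbb{U})$, so $(\mathbb{U},\phi)$ is a $P$-type S-extension of $X$. Lemma~\ref{lem_weak} applied to $(\mathbb{U},\phi)$ yields $\pi\colon\Gamma\to\mathbb{U}$ with $d_{\mathbb{U}}(\pi(g_1H),\pi(g_2H))=w(g_1H,g_2H)$ for all $(g_1H,g_2H)\in E(\Gamma)$. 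Setting $\rho(u,v):=d_{\mathbb{U}}(\pi(u),\pi(v))$ for $u,v\in V(\Gamma)$ gives a pseudometric --- symmetry, vanishing on the diagonal, and the triangle inequality are all inherited from $d_{\mathbb{U}}$, with no injectivity of $\pi$ required --- and $\rho$ is consistent with $w$ by construction. Combined with the previous paragraph, this shows $w$ is reduced.

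I do not expect a real obstacle along this route; the one point that needs care is to keep the argument logically independent of Solecki's theorem, which it is, since universality and ultrahomogeneity of $\mathbb{U}$ are classical and do not rely on Theorem~\ref{thm_S}. If instead one wanted an argument internal to the construction (not invoking $\mathbb{U}$), the work would move into bookkeeping of $H$-cosets along the path: writing each edge as a left translate $h_i\cdot(r_iH,s_iH)$ with $r_i,s_i\in P\cup\{1\}$ and $r_i(a_0),s_i(a_0)$ defined, matching $h_is_iH=h_{i+1}r_{i+1}H$ at consecutive edges, and composing the partial isometries $r_i,s_i$ to assemble an honest path in $X$ from $p(a_0)$ to $q(a_0)$ of total length $\sum_i d_X(r_i(a_0),s_i(a_0))$ --- reconciling the coset representatives is the delicate step in that version.
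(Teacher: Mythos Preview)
Your proposal is correct and follows essentially the same route as the paper: take $(\mathbb{U},\phi)$ as a $P$-type S-extension of $X$, apply Lemma~\ref{lem_weak} to get $\pi:\Gamma\to\mathbb{U}$, and use the triangle inequality for $d_{\mathbb{U}}$ along a path to verify the criterion of Lemma~\ref{reduced}. Your explicit packaging via the pulled-back pseudometric $\rho=d_{\mathbb{U}}\circ(\pi\times\pi)$ is a nice way to phrase it, but the underlying argument is identical to the paper's.
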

\begin{proof}
We verify using Lemma~\ref{reduced} that for any $(g_1H, g_2H)\in E(\Gamma)$, $d_w(g_1H, g_2H)=w(g_1H, g_2H)$.  Let $\gamma_1=g_1, \gamma_2,\dots, \gamma_{n+1}=g_2$ be elements of $\mathbb{F}(P)$ such that for all 
$i=1,\dots,n$, $(\gamma_iH,\gamma_{i+1}H)\in E(\Gamma)$. Let $Y=\mathbb{U}$ and $(Y, \phi)$ be a $P$-type S-extension of $X$. Let $\pi: \Gamma\to Y$ be given by Lemma~\ref{lem_weak}. Then 
\begin{align*}
w(g_1H, g_2H)&=d_Y(\pi(g_1H), \pi(g_2H))\\ &\leq \sum_{i=1}^n d_Y(\pi(\gamma_iH),\pi(\gamma_{i+1}H))=\sum_{i=1}^n w(\gamma_iH, \gamma_{i+1}H). 
\end{align*}
\end{proof}

\begin{lem}\label{lem_isometry}
$e$ is an isometric embedding.
\end{lem}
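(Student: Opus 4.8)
The plan is to verify directly that $d_w(e(a),e(b))=d_X(a,b)$ for all $a,b\in X$. Since the text has already checked that $e$ is well defined and injective, distance preservation is all that remains (and it will also re-confirm injectivity). The case $a=b$ is immediate, so fix $a\neq b$ in $X$.

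By the standing hypotheses on $P$ we may choose $p,q\in P\cup\{1\}$ with $p(a_0)=a$ and $q(a_0)=b$: if $a=a_0$ take $p=1$, and if $a\neq a_0$ use $X\setminus\{a_0\}\subseteq P(a_0)$ to pick $p\in P$ with $p(a_0)=a$; likewise for $b$. Then $e(a)=pH$ and $e(b)=qH$ by definition of $e$, and these are distinct vertices of $\Gamma$ by injectivity of $e$. By clause (1) in the construction of $(\Gamma,w)$ there is an edge between $pH$ and $qH$, with weight $w(pH,qH)=d_X(p(a_0),q(a_0))=d_X(a,b)$. Now invoke Lemma~\ref{reducedw}: $w$ is reduced, so $d_w(x,y)=w(x,y)$ for every edge $(x,y)\in E(\Gamma)$ (equivalently, condition (i) of Lemma~\ref{reduced} holds). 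Applying this to the edge $(pH,qH)$ gives
$$ d_w(e(a),e(b))=d_w(pH,qH)=w(pH,qH)=d_X(a,b), $$
which is exactly what we wanted. Hence $e$ is distance preserving, and therefore an isometric embedding; identifying $X$ with $e(X)$, this realizes $\Gamma$ as an extension of $X$.

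There is essentially no obstacle left at this stage: the content has been front-loaded into Lemmas~\ref{lem_weak} and \ref{reducedw}. The only point worth flagging is why the argument is this short, namely that each pair of points of $e(X)$ is joined by a \emph{single} edge $(pH,qH)$ whose weight is the prescribed $X$-distance, so once $w$ is known to be reduced the path metric is automatically pinned down on $e(X)\times e(X)$. The genuinely nontrivial input is thus the reducedness of $w$, which itself rests on the existence of a $P$-type S-extension of $X$ into the Urysohn space $\mathbb{U}$ (used in the proof of Lemma~\ref{reducedw}).
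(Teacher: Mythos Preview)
Your proof is correct and follows essentially the same approach as the paper's: pick $p,q\in P\cup\{1\}$ with $p(a_0)=a$, $q(a_0)=b$, note that $(pH,qH)\in E(\Gamma)$ with weight $d_X(a,b)$, and invoke Lemma~\ref{reducedw} (reducedness of $w$) to conclude $d_w(e(a),e(b))=w(pH,qH)=d_X(a,b)$. The paper's version is just a terser rendition of the same argument.
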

\begin{proof}
Let $a, b\in X$ and $p, q\in P\cup\{1\}$ with $p(a_0)=a$ and $q(a_0)=b$. Then $(pH, qH)\in E(\Gamma)$. Since $w$ is reduced, we have
$$ d_w(e(a), e(b))=d_w(pH, qH)=w(pH, qH)=d_X(p(a_0),q(a_0))=d_X(a, b). $$
\end{proof}

We identify $X$ with 
$$e(X)=\{pH \,:\,  p\in P\cup\{1\} \mbox{ and $p(a_0)$ is defined}\}\subseteq \Gamma$$
and consider $\Gamma$ an extension of $X$. For each $q\in P$, consider the partial map $\hat{q}: e(X)\to e(X)$ defined by
$\hat{q}(pH)=qpH$ for all $pH,qpH\in e(X)$, i.e., whenever $p(a_0)$ and $q(p(a_0))$ are defined; note that in this case there exists $r\in P$ such that $r(a_0)=q(p(a_0))$ and $rH=qpH$. Then it is straightforward to verify that for all $a, b\in X$, $q(a)=b$ iff $\hat{q}(e(a))=e(b)$. Thus we may identify $q$ with $\hat{q}$ on the domain $e(\mbox{dom}(q))$.

Define $\Phi: P\to \mbox{Iso}(\Gamma)$ by letting, for any $q\in P$,
$$ \Phi(q)(gH)=qgH $$
for all $g\in \mathbb{F}(P)$. To see that $\Phi(q)$ is indeed an isometry of $\Gamma$, let $g_1, g_2\in \mathbb{F}(P)$. From the definitions of $w$ and $\delta_w$, we get $\delta_w(g_1H, g_2H)=\delta_w(qg_1H, qg_2H)$ (including the case when one of these quantities is $\infty$). It follows that $d_w(g_1H, g_2H)=d_w(qg_1H, qg_2H)$.

\begin{lem}\label{lem_solution}
$(\Gamma, \Phi)$ is a $P$-type S-extension of $X$.
\end{lem}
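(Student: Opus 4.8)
The plan is to verify the two defining conditions of a $P$-type S-extension: first, that $\Gamma$ (with the path metric $d_w$) genuinely extends $X$, and second, that $\Phi$ maps each $p\in P$ to an isometry of $\Gamma$ extending $p$. The first of these has essentially been done already: Lemma~\ref{lem_isometry} shows that $e$ is an isometric embedding, so after identifying $X$ with $e(X)\subseteq\Gamma$ we have $d_w\!\restriction_{e(X)\times e(X)} = d_X$, i.e.\ $\Gamma$ is an extension of $X$. The remark immediately preceding the lemma already checks that each $\Phi(q)$ is an isometry of $\Gamma$, since $\delta_w(g_1H,g_2H)=\delta_w(qg_1H,qg_2H)$ for all $g_1,g_2\in\mathbb{F}(P)$ and hence $d_w(g_1H,g_2H)=d_w(qg_1H,qg_2H)$; moreover $\Phi(q)$ is a bijection of $\Gamma$ with inverse $\Phi(q^{-1})$, since left multiplication by $q$ on cosets $gH$ is invertible. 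So the bulk of the work is the remaining assertion.

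The key step that remains is to show that $\Phi(q)$ extends $q$ for every $q\in P$ — equivalently, under the identification of $X$ with $e(X)$, that $\Phi(q)\!\restriction_{e(\mathrm{dom}(q))}$ agrees with $q$. This follows from the computation already recorded just before the lemma: for $a,b\in X$ we have $q(a)=b$ iff $\hat q(e(a))=e(b)$, where $\hat q(pH)=qpH$. I would spell this out: take $a\in\mathrm{dom}(q)$ and pick $p\in P\cup\{1\}$ with $p(a_0)=a$; if $a=a_0$ then $q(a_0)$ is defined (as $q\in P$ acts on $a_0$ or not — here one uses that $q$ extends a partial isometry, and if $a_0\in\mathrm{dom}(q)$ then $q$ itself witnesses $q\in P$ with $q(a_0)$ defined, so $e(a)=H$ maps to $qH=e(q(a_0))$), and if $a\ne a_0$ then $e(a)=pH$ and $\Phi(q)(pH)=qpH$; since $qp$ is a word in $P$ with $(qp)(a_0)=q(p(a_0))=q(a)$ defined, we get $qpH=e(q(a))$ by well-definedness of $e$ (two words $r,s$ in $P$ with $r(a_0)=s(a_0)$ satisfy $rH=sH$ because $r^{-1}s\in H$). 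Hence $\Phi(q)(e(a))=e(q(a))$, which is exactly the statement that $\Phi(q)$ extends $q$ once $X$ and $e(X)$ are identified.

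The main obstacle, such as it is, is bookkeeping rather than mathematical depth: one must handle the distinguished-point case $a=a_0$ carefully (there is no loss, since $q(a_0)$ being defined is precisely what lets us write $qH=e(q(a_0))$, while if $a_0\notin\mathrm{dom}(q)$ there is nothing to check at $a_0$), and one must be careful that membership $pH\in e(X)$ and the formula $e(q(a))=q(p(a_0))H$ use only that words in $P$ with the same action on $a_0$ lie in the same $H$-coset. All the genuinely substantive facts — that $L_w>0$, $B_w<\infty$ so $d_w$ is an honest metric, that $w$ is reduced (Lemma~\ref{reducedw}), and that $e$ is isometric (Lemma~\ref{lem_isometry}) — have already been established, so the proof of this lemma is short and consists of assembling these pieces together with the left-translation invariance of $d_w$.
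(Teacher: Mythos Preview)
Your proposal is correct and follows essentially the same approach as the paper: the paper's proof is a single line (``For any $q\in P$, $\Phi(q)$ is obviously an extension of $\hat{q}$'') because all of the ingredients you carefully assemble---that $e$ is isometric, that $\Phi(q)$ is an isometry, and that $\hat q(pH)=qpH$ agrees with $q$ on $e(\mathrm{dom}(q))$---were established in the paragraphs immediately preceding the lemma. You have simply unpacked what the paper leaves as ``obvious,'' and your handling of the $a=a_0$ case and the coset bookkeeping is sound.
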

\begin{proof}
For any $q\in P$, $\Phi(q)$ is obviously an extension of $\hat{q}$.
\end{proof}

To construct a finite $P$-type S-extension of $X$ our plan is to find a suitable normal subgroup $N$ of finite index in $\mathbb{F}(P)$, and to use $\Gamma_N=\mathbb{F}(P)/NH$ as the underlying space of the $P$-type S-extension. Assuming such a normal subgroup $N\unlhd\, \mathbb{F}(P)$ is found, we first turn $\Gamma_N$ into a weighted graph $(\Gamma_N, w_N)$ as follows:
\begin{enumerate}
\item for every $p,q\in P\cup\{1\}$ with $p(a_0)$ and $q(a_0)$ defined, there is an edge between $pNH$ and $qNH$ with $w_N(pNH, qNH)=d_X(p(a_0),q(a_0))$, and
\item for every $g,g_1,g_2\in \mathbb{F}(P)$, if there is an edge between $g_1NH$ and $g_2NH$, then there is an edge between $gg_1NH$ and $gg_2NH$ with $$w_N(gg_1NH, gg_2NH)=w_N(g_1NH, g_2NH).$$
\end{enumerate}
To guarantee that $w_N$ is well-defined, we use a similar argument as before provided that the following condition holds:
\begin{enumerate}
\item[(C1)] For every $p,q,r,s\in P\cup\{1\}$ such that $d_X(p(a_0),q(a_0))\neq d_X(r(a_0),s(a_0))$, there does not exist $g\in \mathbb{F}(P)$ such that $gpNH=rNH$ and $gqNH=sNH$, equivalently, $N\cap pHr^{-1}sHq^{-1}=\emptyset$.
\end{enumerate}
To see the equivalence in the statement of (C1), suppose $gpNH=rNH$ and $gqNH=sNH$. Then by the normality of $N$ we have $g\in rNHp^{-1}\cap sNHq^{-1}$, and thus $rNHp^{-1}\cap sNHq^{-1}\neq\emptyset$. It follows that
$N\cap pHr^{-1}sHq^{-1}N\neq\emptyset$, or $N\cap pHr^{-1}sHq^{-1}\neq\emptyset$. All steps can be reversed to establish the backward implication.

Another similar argument as before shows that $d_{w_N}$ is a metric on $\Gamma_N$. We again define
\[
e_N(a)=\begin{cases} 
                NH, &\text{ if } a=a_0, \\
                pNH, &\text{ where }p\in P\text{ and }p(a_0)=a,\text{  if } a\neq a_0.
          \end{cases} \\
\]
In order to guarantee that $e_N$ is one-to-one, we argue similarly as before provided that the following condition holds for $N$:
\begin{enumerate}\label{condition}
\item[(C2)] For every $p,q\in P\cup\{1\}$, if $p(a_0)$ and $q(a_0)$ are defined and $p(a_0)\neq q(a_0)$, then $p^{-1}q\notin NH$, equivalently, $N\cap pHq^{-1}=\emptyset$.
\end{enumerate}

Finally, to guarantee that $e_N$ is an isometric embedding, we argue similarly as in the proof of Lemma~\ref{lem_isometry} provided that $w_N$ is reduced, which corresponds to the following condition:
\begin{enumerate}
\item[(C3)] For every $p,q,r_1,s_1,\dots,r_n,s_n\in P\cup\{1\}$ such that 
\[
d_X(p(a_0),q(a_0))>\sum_{i=1}^n d_X(r_i(a_0),s_i(a_0)),
\]
there does not exist a path in $\Gamma_N$ from $pNH$ to $qNH$ using translates of edges ${(r_1NH,s_1NH),\dots, (r_nNH,s_nNH)}$ in the same order. That is, there do not exist $g_1, \dots, g_n\in \mathbb{F}(P)$ such that
\begin{align*} pNH=g_1r_1NH, &\ \ g_1s_1NH=g_2r_2NH, \\  \dots&\dots \\ g_{n-1}s_{n-1}NH=g_nr_nNH, &\ \  g_ns_nNH=qH. 
\end{align*}
Equivalently, $N\cap pHr_1^{-1}s_1H\cdots Hr_n^{-1}s_nHq^{-1}=\emptyset$.
\end{enumerate}

To summarize, we need to find $N\unlhd\, \mathbb{F}(P)$ of finite index so that (C1), (C2) and (C3) hold. Note that these correspond to finitely many conditions, and each condition is of the form $\gamma N\cap H_1\cdots H_n=\emptyset$ where $\gamma\in \mathbb{F}(P)$, $H_1,\dots, H_n$ are finitely generated subgroups of $\mathbb{F}(P)$, and $\gamma\not\in H_1\cdots H_n$. For example, the condition in (C1) can be rewritten as 
$$ (p^{-1}qs^{-1}r)N\cap H (r^{-1}sHs^{-1}r) =\emptyset. $$
Thus, by the Ribes--Zalesskii theorem, for each condition of the form $\gamma N\cap H_1\cdots H_n=\emptyset$, where $\gamma\notin H_1\cdots H_n$, there is a normal subgroup of finite index satisfying the condition. Taking the intersection of all these subgroups, we obtain still a normal subgroup of finite index to satisfy all conditions (C1), (C2) and (C3).

Now 
$$ e_N(X)=\{ pNH\,:\, p\in P\mbox{ and $p(a_0)$ is defined}\}. $$
Similar to the above, for each $q\in P$ we can define the partial map $\tilde{q}: e_N(X)\to e_N(X)$ by $\tilde{q}(pNH)=qpNH$. Then $X$ is identified with $e_N(X)$ and $q$ is identified with $\tilde{q}$ with domain $e(\mbox{dom}(q))$. Define $\Phi_N: P \to \mbox{Iso}(\Gamma_N)$ by 
$$ \Phi_N(q)(gNH)=qgNH. $$
Then it is obvious that $\Phi_N(q)$ extends $\tilde{q}$ for all $q\in P$.

We have thus established that $(\Gamma_N, \Phi_N)$ is a finite $P$-type S-extension of $X$.

\begin{defn}
Let $X$ be a metric space and $P\subseteq \mathcal{P}_X$ be such that $X\setminus \{a_0\}\subseteq P(a_0)$. We say $N\unlhd\, \mathbb{F}(P)$ is a {\it feasible $P$-type prekernel for $X$} if it satisfies (C1), (C2) and (C3). When there is no danger of confusion, we call $N$ a {\it feasible prekernel}.
\end{defn}



What we have shown in this section can be summarized as follows.

\begin{lem} Let $X$ be a metric space. If $N\unlhd\, \mathbb{F}(P)$ is a feasible $P$-type prekernel for $X$, then $(\Gamma_N, \Phi_N)$ is a $P$-type S-extension of $X$.
\end{lem}

Thus Theorem~\ref{thm_S} follows from the fact that for any finite metric space $X$ there is a feasible $P$-type prekernel $N\unlhd\, \mathbb{F}(P)$ that is of finite index in $\mathbb{F}(P)$. This in turn follows from property RZ of the free group $\mathbb{F}(P)$.

\section{Minimal S-Extensions\label{sec_minimal}}

In this section we give a complete characterization of all finite minimal $P$-type S-extensions of a given finite metric space. This is done by showing that the $P$-type S-extension we constructed in the previous section is canonical in several senses. We use the same notations from the previous section.

Throughout this section we still fix a finite metric space $X$, a distinguished point $a_0\in X$ and $P\subseteq \mathcal{P}_X$ where $P=P^{-1}$ and $X\setminus \{a_0\}\subseteq P(a_0)$. We have constructed $P$-type S-extensions $(\Gamma,\Phi)$ and $(\Gamma_N, \Phi_N)$ for suitable $N\unlhd\, \mathbb{F}(P)$. Here we first note that, as long as $P$ is sufficiently rich, these $P$-type S-extensions do not depend on the choice of the point $a_0\in X$. More explicitly, if $a'_0\in X$ and $p_0(a_0)=a'_0$ for $p_0\in P$, and if $X\setminus \{a_0'\}\subseteq P(a_0')$, then we could similarly define $\Gamma'=\mathbb{F}(P)/H'$ and $\Phi'$. It is easy to see that $H'=p_0Hp_0^{-1}$. Thus we may define a bijection $\pi: \Gamma\to \Gamma'$ by $\pi(gH)=gp_0^{-1}H'$ for all $g\in \mathbb{F}(P)$. It is straightforward to check that $\pi$ is an isometry between $\Gamma$ and $\Gamma'$ such that $\pi(\Phi(q)(gH))=\Phi'(q)(\pi(gH))$ for all $q\in P$ and $g\in\mathbb{F}(P)$. Thus $\pi$ is indeed an isomorphism between the two $P$-type S-extensions. Similarly, when $P$ is sufficiently rich, the finite $P$-type S-extension $(\Gamma_N, \Phi_N)$ does not depend on the choice of $a_0$ either.

Next we note that for any $P$-type S-extension $(Y,\phi)$ of $X$, the $P$-type S-map $\phi$ can be trivially extended to a map from all of $\mathbb{F}(P)$ to $\mbox{Iso}(Y)$ by letting
$$ \hat{\phi}(p_1\dots p_n)=\phi(p_1)\circ \dots \circ \phi(p_n) $$
for all $p_1,\dots, p_n\in\mathbb{F}(P)$. $\hat{\phi}$ is a semigroup homomorphism but not necessarily a group homomorphism. To turn it into a group homomorphism, we just need to make sure that $\phi(p^{-1})=\phi(p)^{-1}$ for all $p\in P$, which is easy to arrange. In the rest of this paper, we will use $\phi$ to denote the extension $\hat{\phi}$, and thus regard $\phi$ as a map from $\mathbb{F}(P)$ to $\mbox{Iso}(Y)$. We will also tacitly assume that all the extended $P$-type S-maps $\phi: \mathbb{F}(P)\to \mbox{Iso}(Y)$ are indeed group homomorphisms and therefore their ranges are subgroups of $\mbox{Iso}(Y)$. We note that the extended $P$-type S-map $\Phi: \mathbb{F}(P)\to \mbox{Iso}(\Gamma_N)$ is already a group homomorphism.

The following lemma is one evidence of the canonicity of the construction $(\Gamma_N, \Phi_N)$.

\begin{lem}\label{N_G}
Let $N\unlhd\, \mathbb{F}(P)$ be a feasible $P$-type prekernel for $X$, that is, $(\Gamma_N,\Phi_N)$ is a $P$-type S-extension of $X$. Let $G=\Phi_N(\mathbb{F}(P))\leq \mbox{Iso}(\Gamma_N)$ and $N_G=\mbox{ker}(\Phi_N)$.
Then $N_G\unlhd \, \mathbb{F}(P)$ is a normal subgroup and $NH=N_GH$. In particular, $(\Gamma_{N_G}, \Phi_{N_G})=(\Gamma_N, \Phi_N)$. 
\end{lem}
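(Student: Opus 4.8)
The plan is to identify the subgroup $N_G$ explicitly as the \emph{normal core} of $NH$ in $\mathbb{F}(P)$, and then to observe that replacing $N$ by $N_G$ changes neither the coset space $\mathbb{F}(P)/NH$, nor the weight function, nor the S-map. First I would note that, since $N\unlhd\mathbb{F}(P)$ and $H\leq\mathbb{F}(P)$, the product $NH$ is a subgroup of $\mathbb{F}(P)$, and it has finite index since it contains $N$. By construction, for every $g\in\mathbb{F}(P)$ the isometry $\Phi_N(g)$ acts on $\Gamma_N=\mathbb{F}(P)/NH$ by $uNH\mapsto guNH$, so $\Phi_N$ is exactly the left translation action of $\mathbb{F}(P)$ on the coset space $\mathbb{F}(P)/NH$ (taking values in $\mbox{Iso}(\Gamma_N)$ because each such translation is an isometry of $d_{w_N}$). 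Consequently $N_G=\ker(\Phi_N)=\bigcap_{u\in\mathbb{F}(P)}u(NH)u^{-1}$, the largest normal subgroup of $\mathbb{F}(P)$ contained in $NH$. Being the normal core of a finite-index subgroup, $N_G$ is a normal subgroup of finite index (for instance $\mathbb{F}(P)/N_G$ embeds into the symmetric group on the finitely many cosets of $NH$). Moreover $N\subseteq N_G\subseteq NH$: the second inclusion is clear, and the first holds because $N$ is normal, so $uNu^{-1}=N\subseteq u(NH)u^{-1}$ for every $u$.

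From the chain $N\subseteq N_G\subseteq NH$ the identity $NH=N_GH$ follows at once: $N\subseteq N_G$ gives $NH\subseteq N_GH$, while $N_G\subseteq NH$ together with $H\subseteq NH$ gives $N_GH\subseteq (NH)(NH)=NH$.

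It then remains to check that $(\Gamma_{N_G},\Phi_{N_G})$ is well defined and literally equals $(\Gamma_N,\Phi_N)$. The point is that conditions (C1), (C2) and (C3) depend on $N$ only through the subgroup $NH$: each of the equivalent reformulations recorded when those conditions were introduced is a statement about $\mathbb{F}(P)/NH$, and uses only the normality of the subgroup in question. Since $N$ satisfies (C1)--(C3), since $N_GH=NH$, and since $N_G$ is normal, the subgroup $N_G$ satisfies (C1)--(C3) as well, so $(\Gamma_{N_G},\Phi_{N_G})$ is a genuine $P$-type S-extension of $X$. Its underlying set is $\mathbb{F}(P)/N_GH=\mathbb{F}(P)/NH$, the underlying set of $\Gamma_N$; its weight function is defined by the same recipe as $w_N$ and hence equals $w_N$, so $d_{w_{N_G}}=d_{w_N}$; and $\Phi_{N_G}(q)(gN_GH)=qgN_GH=qgNH=\Phi_N(q)(gNH)$ for all $q\in P$ and $g\in\mathbb{F}(P)$. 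Hence $(\Gamma_{N_G},\Phi_{N_G})=(\Gamma_N,\Phi_N)$. The only step that really needs attention is this last one: one must be sure that enlarging $N$ to $N_G$ does not destroy the defining conditions of an S-extension, which is precisely why it matters that (C1)--(C3) see $N$ only via $NH$, and that is unchanged.
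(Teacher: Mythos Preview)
Your proof is correct and follows essentially the same approach as the paper: both arguments establish the chain $N\subseteq N_G\subseteq NH$ (the paper by direct elementwise computation, you by recognizing $N_G$ as the normal core of $NH$) and deduce $NH=N_GH$ from it. Your explicit verification that (C1)--(C3) depend only on $NH$, and hence that $(\Gamma_{N_G},\Phi_{N_G})=(\Gamma_N,\Phi_N)$, fills in a step the paper leaves implicit.
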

Note that the notion of ``feasible prekernel'' is justified by Lemma \ref{N_G} which states that such a group $N$ can be massaged into another normal subgroup $N_G$ that produces the same S-extension of $X$ and $N_G=\mbox{ker}(\Phi_N)=\mbox{ker}(\Phi_{N_G})$. 

\begin{proof} $N_G$ is obviously a normal subgroup of $\mathbb{F}(P)$. We only need to verify $NH=N_GH$. Let $\gamma\in N_G$. Then $\Phi_N(\gamma)=1$ and for all $g\in \mathbb{F}(P)$, $\gamma gNH=\Phi_N(\gamma)(gNH)=gNH$. In particular $\gamma NH=NH$, and so $\gamma\in NH$. This shows that $N_G\subseteq NH$ and so $N_GH\leq NH$. Conversely, suppose $\gamma\in N$. Then for all $g\in \mathbb{F}(P)$, we have $\Phi_N(\gamma)(gNH)=\gamma gNH=g(g^{-1}\gamma g)NH=gNH$. Thus $\gamma\in \mbox{ker}(\Phi_N)=N_G$. This shows that $N\leq N_G$ and so $NH\leq N_GH$.
\end{proof}


Next we define minimality for $P$-type S-extensions.

\begin{defn}\label{minimal}
A $P$-type S-extension $(Y,\phi)$ of $X$ is said to be {\it minimal} if for any $y\in Y$ there is $g\in\mathbb{F}(P)$ such that $y=\phi(g)(a_0)$.
\end{defn}

We state the following fact without proof.

\begin{lem} Let $(Y,\phi)$ be a $P$-type S-extension of $X$. Then the following are equivalent:
\begin{enumerate}
\item[(i)] $(Y,\phi)$ is minimal;
\item[(ii)] For any $y\in Y$ there exist $g\in\mathbb{F}(P)$ and $x\in X$ such that $y=\phi(g)(x)$.
\end{enumerate}
\end{lem}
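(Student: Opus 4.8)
The plan is to observe that this equivalence is essentially a bookkeeping consequence of the standing hypothesis $X\setminus\{a_0\}\subseteq P(a_0)$, so the proof is very short and the only thing to be careful about is that $\phi(p)$ genuinely extends $p$ as a partial isometry, i.e. that $\phi(p)(a_0)=p(a_0)$ whenever $a_0\in\operatorname{dom}(p)$.

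First I would prove $(\mathrm{i})\Rightarrow(\mathrm{ii})$, which is immediate: given $y\in Y$, minimality supplies $g\in\mathbb{F}(P)$ with $y=\phi(g)(a_0)$, and since $a_0\in X$ we may take $x=a_0$ in $(\mathrm{ii})$.

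Next I would prove $(\mathrm{ii})\Rightarrow(\mathrm{i})$. Fix $y\in Y$ and, using $(\mathrm{ii})$, write $y=\phi(g)(x)$ with $g\in\mathbb{F}(P)$ and $x\in X$. If $x=a_0$ we are done. Otherwise, since $X\setminus\{a_0\}\subseteq P(a_0)$, there is $p\in P$ with $a_0\in\operatorname{dom}(p)$ and $p(a_0)=x$; because $\phi(p)$ extends $p$ we have $\phi(p)(a_0)=p(a_0)=x$, hence $y=\phi(g)(\phi(p)(a_0))=\phi(gp)(a_0)$. Taking $h=gp\in\mathbb{F}(P)$ witnesses minimality, so $(\mathrm{i})$ holds.

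There is no real obstacle here; the only point worth stating explicitly is that the composition used in $(\mathrm{ii})\Rightarrow(\mathrm{i})$ is legitimate because $\phi$ has been (tacitly) extended to a homomorphism $\phi:\mathbb{F}(P)\to\mathrm{Iso}(Y)$ with $\phi(gp)=\phi(g)\circ\phi(p)$, as set up just before Definition~\ref{minimal}.
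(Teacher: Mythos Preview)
Your proof is correct, and in fact the paper states this lemma explicitly without proof, so there is nothing to compare against. The argument you give is exactly the intended one: the implication $(\mathrm{i})\Rightarrow(\mathrm{ii})$ is trivial since $a_0\in X$, and $(\mathrm{ii})\Rightarrow(\mathrm{i})$ uses only the standing assumption $X\setminus\{a_0\}\subseteq P(a_0)$ together with the fact that $\phi(p)\supseteq p$, both of which you invoke correctly.
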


Of course, the notion of minimality is motivated by the observation that if $(Y, \phi)$ is a $P$-type S-extension of $X$ and let
$$ Z=\{ \phi(g)(x)\,:\, g\in\mathbb{F}(P), x\in X\}, $$
then $Z\subseteq Y$ and for any $p\in P$ and $z\in Z$, $\phi(p)(z)\in Z$. Thus, by defining
$$ \psi(p)=\phi(p)\upharpoonright Z $$
for all $p\in P$, we get another $P$-type S-extension $(Z,\psi)$ of $X$ which is a subextension of $(Y, \phi)$.

We also note that, if $(Y,\phi)$ is a $P$-type S-extension of $X$, then there are many ways to define proper superextensions of $(Y,\phi)$ by adding points to $Y$ and defining metrics appropriately. Thus there is no hope to give a reasonable characterization of all finite $P$-type S-extensions of $X$. Below we concentrate on characterizing finite minimal $P$-type S-extensions of $X$. We will show that all finite minimal $P$-type S-extensions of $X$ are derived from $P$-type S-extensions of the form $(\Gamma_N, \Phi_N)$.

\begin{lem}\label{lemc123}
Let $(Y,\phi)$ be a $P$-type S-extension of $X$. Let $N=\mbox{\rm ker}(\phi)$. Then $N$ is a feasible $P$-type prekernel for $X$. 
\end{lem}
\begin{proof}
Define $\Psi: \Gamma_N\to Y$ by $\Psi(g NH)=\phi(g)(a_0)$ for all $g\in\mathbb{F}(P)$. To see $\Psi$ is well-defined, note that if $g_2^{-1}g_1 \in NH$, then for some $n\in N$ and $h\in H$, we have $\phi(g_2)^{-1}\phi(g_1)(a_0)=\phi(n)\phi(h)(a_0)=\phi(n)(a_0)=a_0$. Here we note that for any $n\in N$, $\phi(n)(a_0)=a_0$ since $n\in \mbox{ker}(\phi)$, and for any $h\in H$, $\phi(h)(a_0)=a_0$ since $h(a_0)=a_0$ and $\phi(h)$ extends $h$. Thus $\phi(g_1)(a_0)=\phi(g_2)(a_0)$. 

To verify (C1), let $p,q,r,s\in P\cup \{1\}$ and $g\in \mathbb{F}(P)$ be such that $p(a_0)$, $q(a_0)$, $r(a_0)$ and $s(a_0)$ are defined, $pNH=grNH$ and $qNH=gsNH$. Applying the map $\Psi$ to these equations, we get $\phi(p)(a_0)=\phi(g)\phi(r)(a_0)$ and $\phi(q)(a_0)=\phi(g)\phi(s)(a_0)$. We need to show that $d_X(p(a_0),q(a_0))=d_X(r(a_0),s(a_0))$. Since $\phi(g)$ is an isometry of $Y$, we have 
\begin{align*}
d_X(p(a_0),q(a_0))&=d_Y(\phi(p)(a_0),\phi(q)(a_0)) \\
&=d_Y(\phi(g)\phi(r)(a_0), \phi(g)\phi(s)(a_0)) \\
&=d_Y(\phi(r)(a_0),\phi(s)(a_0))=d_X(r(a_0),s(a_0)).
\end{align*}

To verify (C2), let $p,q\in P\cup \{1\}$ be such that $p(a_0),q(a_0)$ are defined and $pNH=qNH$. We get
$$ p(a_0)=\phi(p)(a_0)=\Psi(pNH)=\Psi(qNH)=\phi(q)(a_0)=q(a_0). $$

Finally, to verify (C3), let $p,q,r_1,s_1,\dots,r_n,s_n\in P\cup\{1\}$ and $g_1, \dots, g_n\in \mathbb{F}(P)$ be such that $p(a_0), q(a_0), r_1(a_0), s_1(a_0), \dots, r_n(a_0), s_n(a_0)$ are all defined, and 
\begin{align*} pNH=g_1r_1NH, &\ \ g_1s_1NH=g_2r_2NH, \\  \dots&\dots \\ g_{n-1}s_{n-1}NH=g_nr_nNH, &\ \  g_ns_nNH=qH. 
\end{align*}
Applying $\Psi$ to all these equations, we get
\begin{align*} p(a_0)=\phi(g_1)(r_1(a_0)), &\ \ \phi(g_1)(s_1(a_0))=\phi(g_2)(r_2(a_0)), \\  \dots&\dots \\ \phi(g_{n-1})(s_{n-1}(a_0))=\phi(g_n)(r_n(a_0)), &\ \  \phi(g_n)(s_n(a_0))=q(a_0). 
\end{align*}
It follows that
\[
d_X(p(a_0),q(a_0))=d_Y(p(a_0), q(a_0))\leq \sum_{i=1}^n d_Y(r_i(a_0),s_i(a_0))=\sum_{i=1}^n d_X(r_i(a_0),s_i(a_0)).
\]
\end{proof}

Thus, for any finite $P$-type S-extension $(Y,\phi)$, we have that $N=\mbox{ker}(\phi)$ is a normal subgroup of $\mathbb{F}(P)$ of finite index and that $N$ is a feasible prekernel. Furthermore, we will be able to carry out the construction of $(\Gamma_N, \Phi_N)$ as in Section~\ref{finite} as a $P$-type S-extension of $X$ based on the weighted graph $(\Gamma_N, w_N)$. In particular, the weight function $w_N$, the path metric $d_{w_N}$, the isometric embedding $e_N$, etc. are all well-defined. 

\begin{thm}\label{lem_onto}
Let $(Y,\phi)$ be a finite minimal $P$-type S-extension of $X$. Let $N=\mbox{\rm ker}(\phi)$ and $G=\Phi_N(\mathbb{F}(P))$. Then there is a $G$-invariant pseudometric $\rho$ on $\Gamma_N$ which is consistent with $w_N$ such that $(Y,\phi)$ is isomorphic to $(\overline{\Gamma_N}, \overline{\Phi_N})$. 
\end{thm}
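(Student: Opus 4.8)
The plan is to build the pseudometric $\rho$ on $\Gamma_N$ directly from the map $\Psi\colon\Gamma_N\to Y$ introduced in the proof of Lemma~\ref{lemc123}, namely $\Psi(gNH)=\phi(g)(a_0)$, and then show that the metric identification of $(\Gamma_N,\rho)$ is exactly $(Y,\phi)$. First I would define $\rho(gNH,g'NH)=d_Y(\Psi(gNH),\Psi(g'NH))$; this is a pseudometric since $d_Y$ is a metric and $\Psi$ need not be injective. Consistency with $w_N$ is immediate from the edge relation of $\Gamma_N$: if $(gpNH,gqNH)\in E(\Gamma_N)$ with $p,q\in P\cup\{1\}$ and $p(a_0),q(a_0)$ defined, then, exactly as in the displayed computation in Lemma~\ref{lemc123}, $\rho(gpNH,gqNH)=d_Y(\phi(g)\phi(p)(a_0),\phi(g)\phi(q)(a_0))=d_Y(\phi(p)(a_0),\phi(q)(a_0))=d_X(p(a_0),q(a_0))=w_N(gpNH,gqNH)$. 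For $G$-invariance, recall $G=\Phi_N(\mathbb{F}(P))$ acts on $\Gamma_N$ by $\Phi_N(\gamma)(gNH)=\gamma gNH$; then $\rho(\gamma gNH,\gamma g'NH)=d_Y(\phi(\gamma g)(a_0),\phi(\gamma g')(a_0))=d_Y(\phi(\gamma)\phi(g)(a_0),\phi(\gamma)\phi(g')(a_0))=\rho(gNH,g'NH)$ since $\phi(\gamma)\in\mbox{Iso}(Y)$, so each element of $G$ is an isometry of $(\Gamma_N,\rho)$. Note this uses that $\phi$ is a genuine group homomorphism on $\mathbb{F}(P)$, which we are tacitly assuming per the discussion preceding the theorem.

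Next I would pass to the metric identification $(\overline{\Gamma_N},\overline{\rho})$ and the induced $\overline{\Phi_N}$, using the general construction recalled in Section~\ref{Preliminaries} (the metric identification of a pseudometric space, together with the fact that a group of isometries descends to a group of isometries of the quotient). The map $\Psi$ factors through the quotient $\Gamma_N/\!\sim$ as an injection $\overline{\Psi}\colon\overline{\Gamma_N}\to Y$, because $gNH\sim g'NH$ means precisely $\rho(gNH,g'NH)=0$, i.e.\ $\Psi(gNH)=\Psi(g'NH)$; and $\overline{\Psi}$ is distance-preserving by the definition of $\rho$ and $\overline{\rho}$. Surjectivity of $\overline{\Psi}$ onto $Y$ is exactly where minimality enters: by Definition~\ref{minimal}, every $y\in Y$ equals $\phi(g)(a_0)=\Psi(gNH)$ for some $g\in\mathbb{F}(P)$. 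Hence $\overline{\Psi}$ is an isometry of $\overline{\Gamma_N}$ onto $Y$.

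Finally I would check that $\overline{\Psi}$ intertwines the S-maps, i.e.\ $\phi(p)\circ\overline{\Psi}=\overline{\Psi}\circ\overline{\Phi_N}(p)$ for all $p\in P$, which is the requirement for being an isomorphism of $P$-type S-extensions. This is a one-line verification: for $g\in\mathbb{F}(P)$, $\overline{\Psi}(\overline{\Phi_N}(p)([gNH]_\sim))=\overline{\Psi}([pgNH]_\sim)=\phi(pg)(a_0)=\phi(p)(\phi(g)(a_0))=\phi(p)(\overline{\Psi}([gNH]_\sim))$. One must also confirm that $\overline{\Psi}$ restricts to the identity on the identified copy of $X$ inside both spaces (so that it is genuinely an isomorphism of extensions of $X$, not merely an abstract isometry intertwining the maps); this follows from $\overline{\Psi}([e_N(a)]_\sim)=\Psi(pNH)=\phi(p)(a_0)=p(a_0)=a$ for $p\in P\cup\{1\}$ with $p(a_0)=a$, using that $\phi(p)$ extends $p$. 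I expect the main subtlety to be bookkeeping rather than a genuine obstacle: one must be careful that $\rho$ being merely a pseudometric is exactly what absorbs any failure of $\Psi$ to be injective, and that the metric identification does not collapse more than $\sim$ does — but this is automatic from the definitions recalled in Section~\ref{Preliminaries}. The role of finiteness of $Y$ is only to guarantee (via Lemma~\ref{lemc123}) that $(\Gamma_N,\Phi_N)$, hence $w_N$ and $\overline{\Phi_N}$, are well-defined in the first place.
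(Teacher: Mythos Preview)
Your proposal is correct and follows essentially the same approach as the paper: define $\rho$ by pulling back $d_Y$ along $\Psi(gNH)=\phi(g)(a_0)$, verify consistency with $w_N$ and $G$-invariance, then use minimality for surjectivity of the induced map $\overline{\Psi}$ and check the intertwining relation. Your additional remark that $\overline{\Psi}$ restricts to the identity on the copy of $X$ is a nice sanity check, though the paper's Definition~2.2 of isomorphism does not explicitly require it.
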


\begin{proof}
We again define $\Psi: \Gamma_N\to Y$ by $\Psi(g NH)=\phi(g)(a_0)$ for all $g\in\mathbb{F}(P)$. As in the proof of Lemma~\ref{lemc123}, $\Psi$ is well-defined. Since $\phi$ is minimal, $\Psi$ is onto. 

We define a pseudometric $\rho$ on $\Gamma_N$ by 
$$ \rho(g_1NH, g_2NH)=d_Y(\Psi(g_1NH), \Psi(g_2NH))=d_Y(\phi(g_1)(a_0), \phi(g_2)(a_0)). $$
It is easy to verify that $\rho$ is indeed a pseudometric on $\Gamma_N$. 

Recall that $\Phi_N: P\to \mbox{Iso}(\Gamma_N)$ is defined by 
$$ \Phi_N(p)(gNH)=pgNH $$
for all $p\in P$ and $g\in \mathbb{F}(P)$. From previous section, the $\mbox{Iso}(\Gamma_N)$ refers to the group of isometries for the metric space $(\Gamma_N, d_{w_N})$. Here we claim that the maps $gNH\mapsto pgNH$ are also isometries of the pseudometric space $(\Gamma_N, \rho)$. To see this, we only need to check
\begin{align*}
\rho(pg_1NH, pg_2NH)&=d_Y(\phi(pg_1)(a_0), \phi(pg_2)(a_0))\\&=d_Y(\phi(p)\phi(g_1)(a_0),\phi(p)\phi(g_2)(a_0))=\rho(g_1NH, g_2NH).
\end{align*}
Extending $\Phi_N$ to a group homomorphism from $\mathbb{F}(P)$ to $\mbox{Iso}(\Gamma_N)$, the group of all isometries of the pseudometric space $(\Gamma_N, \rho)$, it follows that $\rho$ is $G$-invariant. 

To verify that $\rho$ is consistent with $w_N$, we consider an edge in the weighted graph $(\Gamma_N, w_N)$, which is of the form $(gpNH, gqNH)$ where $g\in \mathbb{F}(P)$ and $p,q\in P\cup\{1\}$ are such that $p(a_0)$ and $q(a_0)$ are defined. Note that $w_N(gpNH, gqNH)=d_X(p(a_0), q(a_0))$. We have
\begin{align*}
\rho(gpNH, gqNH)&=d_Y(\phi(g)\phi(p)(a_0),\phi(g)\phi(q)(a_0))\\&=d_Y(\phi(p)(a_0),\phi(q)(a_0))\\&=d_X(p(a_0),q(a_0))=w_N(gpNH, gqNH).
\end{align*}

We can now consider the metric identification of the pseudometric space $(\Gamma_N, \rho)$, which is denoted by $(\overline{\Gamma_N}, \overline{\rho})$. Since $\rho$ is consistent with $w_N$, so is $\overline{\rho}$. Since $\rho$ is $G$-invariant, for each $\varphi\in G$ we can define an isometry $\overline{\varphi}\in \overline{G}$ for $(\overline{\Gamma_N}, \overline{\rho})$. Thus it makes sense to define $\overline{\Phi_N}: P\to \mbox{Iso}(\overline{\Gamma_N})$ by $\overline{\Phi_N}(p)=\overline{\Phi_N(p)}$. 

Finally, let $\pi: \overline{\Gamma_N}\to Y$ be defined as $\pi([gNH]_\sim)=\phi(g)(a_0)$. Then $\pi$ is an isometry between the metric spaces $(\overline{\Gamma_N}, \overline{\rho})$ and $(Y, d_Y)$. To complete the proof of the theorem, we only need to verify that for any $p\in P$, $\pi\circ\overline{\Phi_N}(p)=\phi(p)\circ\pi$. We have
\begin{align*} [\pi\circ\overline{\Phi_N}(p)]([gNH]_\sim)&=\pi[\overline{\Phi_N}(p)([gNH]_\sim)]=\pi([pgNH]_\sim)\\
&=\phi(pg)(a_0)=\phi(p)\phi(g)(a_0)\\
&=\phi(p)[\pi([gNH]_\sim)]=[\phi(p)\circ\pi]([gNH]_\sim). 
\end{align*}
\end{proof}

\begin{thm} Let $N\unlhd\,\mathbb{F}(P)$ be a feasible $P$-type prekernel that is of finite index. Let $G=\Phi_N(\mathbb{F}(P))$. Let $\rho$ be a $G$-invariant pseudometric on $\Gamma_N$ which is consistent with the weight function $w_N$. Then $(\overline{\Gamma_N},\overline{\Phi_N})$ is a finite minimal $P$-type S-extension of $X$.
\end{thm}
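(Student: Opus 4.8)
The plan is to verify that the pair $(\overline{\Gamma_N},\overline{\Phi_N})$ satisfies all the requirements of a finite minimal $P$-type S-extension: (a) $\overline{\Gamma_N}$ is a finite metric space extending $X$; (b) $\overline{\Phi_N}$ maps $P$ into $\mbox{Iso}(\overline{\Gamma_N})$ with $\overline{\Phi_N}(p)$ extending $p$; and (c) the extension is minimal. Since $N$ satisfies (C1), (C2), and (C3), the construction of Section~\ref{finite} goes through: $w_N$ is a well-defined reduced weight function, $d_{w_N}$ is a metric, $e_N$ is an isometric embedding, and $(\Gamma_N,\Phi_N)$ is a $P$-type S-extension of $X$ with $\Phi_N:\mathbb{F}(P)\to\mbox{Iso}(\Gamma_N)$ a group homomorphism and $G=\Phi_N(\mathbb{F}(P))$ its image. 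Finiteness of $\Gamma_N=\mathbb{F}(P)/NH$ follows from $N$ having finite index. The passage to the metric identification $(\overline{\Gamma_N},\overline{\rho})$ is exactly the construction reviewed in Section~\ref{Preliminaries}: because $\rho$ is consistent with $w_N$, its metric identification $\overline{\rho}$ is consistent with $\overline{w_N}$, and since every element of $G$ is a $\rho$-isometry (that is what $G$-invariance of $\rho$ means), each $\Phi_N(p)$ descends to an isometry $\overline{\Phi_N}(p)$ of $(\overline{\Gamma_N},\overline{\rho})$, and the induced map $\overline{G}$ is again a group.

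First I would check that $\overline{\Gamma_N}$ is an extension of $X$. The embedding $e_N:X\to\Gamma_N$ is an isometry onto its image in the metric $d_{w_N}$; composing with the quotient map $q:\Gamma_N\to\overline{\Gamma_N}$, I need $q\circ e_N$ to still be an isometric embedding. Here the key point is that $\rho$ is consistent with $w_N$: for $a,b\in X$ take $p,r\in P\cup\{1\}$ with $p(a_0)=a$, $r(a_0)=b$, so that $(pNH,rNH)$ is an edge of $\Gamma_N$ of weight $d_X(a,b)$; consistency gives $\rho(pNH,rNH)=w_N(pNH,rNH)=d_X(a,b)$, hence $\overline{\rho}([pNH]_\sim,[rNH]_\sim)=d_X(a,b)$, which also shows $q\circ e_N$ is injective on $X$ (distinct points of $X$ get distinct $\overline{\rho}$-distance $>0$ from each other using any reference point, or more directly $d_X(a,b)>0$ forces $[pNH]_\sim\neq[rNH]_\sim$). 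So I identify $X$ with its image in $\overline{\Gamma_N}$.

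Next I would verify that $\overline{\Phi_N}(p)$ extends $p$ for each $p\in P$. Recall from Section~\ref{finite} that $\Phi_N(q)$ extends $\tilde q$, the map $pNH\mapsto qpNH$ on $e_N(\mbox{dom}(q))$, and under the identification of $X$ with $e_N(X)$ this $\tilde q$ is just $q$. Applying the quotient, $\overline{\Phi_N}(p)([gNH]_\sim)=[pgNH]_\sim$, so for $x\in\mbox{dom}(p)$ with $x$ identified with $[rNH]_\sim$ (where $r(a_0)=x$), we get $\overline{\Phi_N}(p)(x)=[prNH]_\sim$, which is the image of $p(x)$ since $pr\in P\cup\{1\}$ applied to $a_0$ gives $p(x)$; thus $\overline{\Phi_N}(p)$ extends $p$. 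For minimality, note every element of $\overline{\Gamma_N}$ has the form $[gNH]_\sim$ for some $g\in\mathbb{F}(P)$, and $[gNH]_\sim=q([gNH])=q(\Phi_N(g)(NH))=\overline{\Phi_N}(g)([NH]_\sim)=\overline{\Phi_N}(g)(a_0)$, so the minimality condition of Definition~\ref{minimal} holds with this $g$. Finally, $\overline{\Gamma_N}$ is finite as a quotient of the finite set $\Gamma_N$.

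I do not anticipate a serious obstacle here: this theorem is the converse of Theorem~\ref{lem_onto}, and every ingredient — well-definedness of $w_N$ under (C1)--(C3), reducedness under (C3), the metric-identification machinery, and the fact that $G$-invariance is exactly what is needed to push $\Phi_N$ through the quotient — has already been set up. The only point requiring any care is bookkeeping the identifications: that $e_N$ survives the quotient as an isometric embedding (which rests squarely on consistency of $\rho$ with $w_N$), and that $[gNH]_\sim = a_0$ when $g\in H$ so that the base point is correctly identified. If anything is mildly delicate, it is confirming that passing from $(\Gamma_N, d_{w_N})$ to $(\overline{\Gamma_N},\overline{\rho})$ does not collapse $X$ — but this is immediate from $\overline{\rho}$ being consistent with $\overline{w_N}$ and the weights on the edges inside $e_N(X)$ being the original positive distances $d_X$.
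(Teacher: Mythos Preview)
Your proposal is correct and follows essentially the same approach as the paper: use consistency of $\rho$ with $w_N$ to show $\overline{e_N}$ is an isometric embedding, use $G$-invariance to push $\Phi_N$ down to isometries of the quotient, and verify minimality via $[gNH]_\sim=\overline{\Phi_N}(g)([NH]_\sim)$. One small wording fix: $pr$ need not literally lie in $P\cup\{1\}$; rather, if $s\in P\cup\{1\}$ satisfies $s(a_0)=p(x)$ then $s^{-1}pr\in H$, so $prNH=sNH=e_N(p(x))$, which is what you need.
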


\begin{proof}
Consider the metric identification $(\overline{\Gamma_N},\overline{\rho})$. Since $\rho$ is $G$-invariant, $\overline{G}$ is a set of isometries for $\overline{\Gamma_N}$. Since $\rho$ is consistent with $w_N$, so is $\overline{\rho}$. 
Define $\overline{e_N}: X\to \overline{\Gamma_N}$ by $\overline{e_N}(a)=[e_N(a)]_\sim$. Then $\overline{e_N}$ is an isometric embedding from $X$ into $\overline{\Gamma_N}$. Note that $\overline{e_N}(a_0)=[NH]_\sim$. Thus we can identify $a_0$ with
$[NH]_\sim$. It follows from similar arguments as before that $(\overline{\Gamma_N}, \overline{\Phi_N})$ is a $P$-type S-extension of $X$. To see that it is minimal, we just note that for any $g\in\mathbb{F}(P)$, 
$\overline{\Phi_N}(g)([NH]_\sim)=[gNH]_\sim$. 
\end{proof}

We summarize the characterization of all finite minimal $P$-type S-extensions of $X$ in the following theorem.

\begin{thm} The following are equivalent:
\begin{enumerate}
\item[(i)] $(Y, \phi)$ is a finite minimal $P$-type S-extension of $X$;
\item[(ii)] There exists a feasible $P$-type prekernel $N\unlhd\, \mathbb{F}(P)$ of finite index, and, letting $G=\Phi_N(\mathbb{F}(P))$, there exists a $G$-invariant pseudometric $\rho$ on $\Gamma_N$ which is consistent with $w_N$, such that $(Y,\phi)$ is isomorphic to $(\overline{\Gamma_N}, \overline{\Phi_N})$;
\item[(iii)] For $N=\mbox{\rm ker}(\phi)$ and $G=\Phi_N(\mathbb{F}(P))$, there exists a $G$-invariant pseudometric $\rho$ on $\Gamma_N$ which is consistent with $w_N$, such that $(Y,\phi)$ is isomorphic to $(\overline{\Gamma_N}, \overline{\Phi_N})$.
\end{enumerate}
\end{thm}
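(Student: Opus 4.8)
The plan is to prove the theorem as a direct consequence of the three preceding results, packaging them into the stated equivalences. Concretely, I would argue $(ii)\Rightarrow(i)$, $(i)\Rightarrow(iii)$, and $(iii)\Rightarrow(ii)$, so that all three statements are seen to be equivalent.

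\emph{$(ii)\Rightarrow(i)$:} This is exactly the content of the second of the two preceding theorems (the one asserting that for $N\unlhd\mathbb{F}(P)$ of finite index satisfying (C1), (C2), (C3), and $G=\Phi_N(\mathbb{F}(P))$, and any $G$-invariant pseudometric $\rho$ on $\Gamma_N$ consistent with $w_N$, the pair $(\overline{\Gamma_N},\overline{\Phi_N})$ is a finite minimal $P$-type S-extension of $X$). Since $(Y,\phi)\cong(\overline{\Gamma_N},\overline{\Phi_N})$ and isomorphism of $P$-type S-extensions clearly preserves both finiteness and minimality (an isomorphism $\pi$ carries $\phi(g)(a_0)$ to $\psi(g)(\pi(a_0))$ for all $g\in\mathbb{F}(P)$, and carries $a_0$ to the distinguished point), it follows that $(Y,\phi)$ is a finite minimal $P$-type S-extension of $X$. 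I would remark that one should be a bit careful to note that $\overline{\Gamma_N}$ is finite: $\Gamma_N=\mathbb{F}(P)/NH$ is finite because $N$ has finite index and $H\supseteq N$-cosets amalgamate, and a quotient of a finite set is finite.

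\emph{$(i)\Rightarrow(iii)$:} First, by Lemma~\ref{lemc123}, $N=\mbox{ker}(\phi)$ satisfies (C1), (C2), (C3); and since $Y$ is finite, $N$ has finite index in $\mathbb{F}(P)$ (the orbit map $g\mapsto\phi(g)(a_0)$ factors through $\mathbb{F}(P)/N\hookrightarrow\mbox{Iso}(Y)$ acting on a finite set, or more simply $\mathbb{F}(P)/N$ embeds in $\mbox{Iso}(Y)$ which is finite). Then Theorem~\ref{lem_onto} produces the desired $G$-invariant pseudometric $\rho$ on $\Gamma_N$ consistent with $w_N$ together with the isomorphism $(Y,\phi)\cong(\overline{\Gamma_N},\overline{\Phi_N})$. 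This is precisely statement $(iii)$.

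\emph{$(iii)\Rightarrow(ii)$:} This is immediate: statement $(iii)$ exhibits a specific choice of $N$, namely $N=\mbox{ker}(\phi)$, which (by the $(i)\Rightarrow(iii)$ direction, or by invoking Lemma~\ref{lemc123} together with finiteness once we know $(iii)$ implies $(i)$ via $(ii)$—so to avoid circularity I would instead argue directly) satisfies (C1), (C2), (C3) and has finite index, and the pseudometric $\rho$ witnesses $(ii)$. To keep the cycle non-circular, the cleanest route is: prove $(ii)\Rightarrow(i)$ and $(i)\Rightarrow(iii)$ as above, and then observe $(iii)\Rightarrow(ii)$ holds because $(iii)$ literally asserts the existence data required by $(ii)$ once we check that $N=\mbox{ker}(\phi)$ satisfies (C1)--(C3) and has finite index — but these follow from Lemma~\ref{lemc123} and the finiteness of $Y$, which we may assume since we are in the hypothesis $(iii)$ where $(Y,\phi)$ is (isomorphic to) a finite object $\overline{\Gamma_N}$, forcing $Y$ finite; alternatively, if one does not want to assume $Y$ finite in $(iii)$, note $\overline{\Gamma_N}$ is automatically finite so $(iii)$ already guarantees it. The main (and only mild) obstacle is bookkeeping: making sure the cycle of implications is genuinely non-circular and that "isomorphic" transfers the properties "finite" and "minimal" — both of which are straightforward once spelled out. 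No new mathematical idea is needed beyond the three theorems already proved.
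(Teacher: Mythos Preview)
Your proposal is correct and is exactly what the paper intends: the theorem is presented as a summary of Lemma~\ref{lemc123}, Theorem~\ref{lem_onto}, and the theorem immediately preceding it, and the paper gives no separate proof. Your cycle $(ii)\Rightarrow(i)\Rightarrow(iii)\Rightarrow(ii)$, together with the routine bookkeeping you spell out (finiteness of $\overline{\Gamma_N}$, transfer of finiteness and minimality under isomorphism, and the observation that $N=\ker(\phi)$ has finite index and satisfies (C1)--(C3)), is the intended argument.
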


\section{Coherent S-Extensions}\label{Coherent S-Extensions}

\subsection{Coherent S-extensions and strongly coherent S-extensions}
In this section we study a notion of coherence for ($P$-type) S-extensions. The terminology has been used for a different notion in Siniora--Solecki \cite{S2} which refers to a slightly stronger condition. We call their notion of coherence \emph{strongly coherent}.

\begin{defn}[Solecki] Let $X$ be a metric space. An S-extension $(Y,\phi)$ of $X$ is \emph{strongly coherent} if for every triple $(p,q,r)$ of partial isometries of $X$ such that $p\circ q=r$, we have $\phi(p)\circ \phi(q)=\phi(r)$. 
\end{defn}

The following is a strengthening of Solecki's theorem on existence of finite S-extensions.

\begin{thm}[Solecki \cite{S09} \cite{R} \cite{S2}]\label{SoleckiTheorem}
Let $X$ be a finite metric space. Then $X$ has a finite strongly coherent S-extension $(Y, \phi)$.
\end{thm}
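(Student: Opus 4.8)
The plan is to follow the canonical construction of Section~\ref{finite} but to organize the free group and its action so that the composition relations of partial isometries are respected from the outset. Concretely, instead of taking $\mathbb{F}(P)$ to be the free group on the abstract generating set $P$, I would work with the (discrete) group $\Sigma$ generated by the symbols $\{s_p : p\in\mathcal{P}_X\}$ subject only to the relations $s_p s_q = s_r$ whenever $p,q,r\in\mathcal{P}_X$ with $p\circ q=r$ (this includes $s_{p^{-1}}=s_p^{-1}$ and $s_{1_A}$ being sent appropriately); this is exactly the group one gets by adjoining to the groupoid $\mathcal{P}(X)$ a formal composition, i.e.\ the ``universal group'' of the inverse semigroup of partial isometries. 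Since $X$ is finite, $\mathcal{P}_X$ is finite and $\Sigma$ is finitely generated. The key point is that any strongly coherent $P$-type S-map $\phi$ factors through $\Sigma$: the assignment $s_p\mapsto\phi(p)$ respects all the imposed relations, so it extends to a genuine group homomorphism $\Sigma\to\mathrm{Iso}(Y)$. Conversely, any homomorphism $\Sigma\to\mathrm{Iso}(Y)$ whose composite with $s_p\mapsto s_p$ sends $s_p$ to an isometry extending $p$ yields a strongly coherent S-map.

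Next I would run the Herwig--Lascar style argument of Section~\ref{finite} verbatim with $\mathbb{F}(P)$ replaced by $\Sigma$. Let $H\le\Sigma$ be the stabilizer of $a_0$ under the partial action (the image in $\Sigma$ of all composites fixing $a_0$), which is finitely generated since $X$ is finite; set $\Gamma=\Sigma/H$ and build the weighted graph $(\Gamma,w)$ and path metric $d_w$ exactly as before, obtaining the ``free'' strongly coherent S-extension $(\Gamma,\Phi)$ (the verifications that $w$ is well-defined, reduced, and that $e\colon X\to\Gamma$ is an isometric embedding go through unchanged, using $\mathbb{U}$-valued comparison maps as in Lemmas~\ref{lem_weak}--\ref{lem_isometry}). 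To get a \emph{finite} extension I need a normal subgroup $N\unlhd\Sigma$ of finite index satisfying the analogues of (C1), (C2), (C3). As in Section~\ref{finite} each such condition has the form $\gamma N\cap H_1\cdots H_n=\emptyset$ with $\gamma\notin H_1\cdots H_n$ and the $H_i$ finitely generated subgroups of $\Sigma$. The separation is then supplied not by Ribes--Zalesskii for free groups directly, but by \textbf{property RZ for $\Sigma$}: by Coulbois' theorem (Theorem~\ref{Coulbois}) property RZ is preserved under free products, and $\Sigma$ is (isomorphic to a retract / can be shown to have RZ because it is) built from finitely many copies of $\mathbb{Z}$ and finite groups amalgamated along the groupoid relations — more carefully, one checks $\Sigma$ is a finite-index subgroup of, or embeds as a retract into, a free product of a free group with finite groups, all of which have RZ and RZ passes to such. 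Granting property RZ for $\Sigma$, each of the finitely many conditions (C1)--(C3) is solved by a finite-index normal subgroup, and their intersection $N$ works simultaneously; then $(\Gamma_N,\Phi_N)$ with $\Gamma_N=\Sigma/NH$ is a finite S-extension, and it is strongly coherent because $\Phi_N$ is the restriction to the $s_p$'s of the group homomorphism $\Sigma\to\mathrm{Iso}(\Gamma_N)$, so $\Phi_N(p)\circ\Phi_N(q)=\Phi_N(r)$ whenever $p\circ q=r$.

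I expect the main obstacle to be precisely the verification that $\Sigma$ has property RZ (equivalently, that the relevant products of finitely generated subgroups are closed in its profinite topology). The relations $s_ps_q=s_r$ do not present $\Sigma$ as a free product outright, so one must either (i) identify $\Sigma$ explicitly as a finite-index subgroup of a free product of a finitely generated free group with finitely many finite groups, using that the groupoid $\mathcal{P}(X)$ is ``free away from the relations forced by $a_0$-composability'', and then invoke that property RZ passes to finite-index overgroups and subgroups together with Theorem~\ref{Coulbois}; or (ii) use Rosendal's characterization (the finite-approximability theorem) in the form needed: strongly coherent S-maps are exactly partial actions of $\Sigma$ by isometries, and finite approximability of any such action is what is wanted, so it suffices to know the relevant group is RZ. Either route reduces Theorem~\ref{SoleckiTheorem} to an RZ-closure statement about the concrete group $\Sigma$; assembling that identification of $\Sigma$ cleanly is the technical heart. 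Everything downstream — well-definedness of $w_N$, reducedness, isometricity of $e_N$, and strong coherence of $\Phi_N$ — is then a routine repetition of the arguments already carried out in Section~\ref{finite}.
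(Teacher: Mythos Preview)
The paper does not give its own proof of this theorem: it is stated with attribution to \cite{S09}, \cite{R}, \cite{S2} and used as a black box (via Lemma~\ref{strong}). So there is no in-paper argument to compare against.

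That said, your proposal has a genuine gap exactly where you flag it. The entire argument rests on the group $\Sigma=\langle s_p:p\in\mathcal P_X\mid s_ps_q=s_{p\circ q}\rangle$ having property RZ, and you do not establish this. Route (ii) is circular: Rosendal's theorem says RZ is \emph{equivalent} to finite approximability of all isometric actions, so invoking it to prove RZ for $\Sigma$ presupposes what you need. Route (i) is not worked out: you assert $\Sigma$ is a finite-index subgroup of, or a retract of, a free product of a free group with finite groups, but the relations $s_ps_q=s_r$ do not present $\Sigma$ in any such form in an obvious way, and you give no argument. (Note, incidentally, that if one includes partial identities among the generators and relations, then for any $p\subseteq p'$ one gets $s_p=s_{p'}$ via $p=p'\circ\mathrm{id}_{\mathrm{dom}(p)}$, so $\Sigma$ collapses dramatically and is far from free; you need to be quite careful about what relations you are actually imposing.) There is also a second, smaller gap: to verify that $w$ is well-defined and reduced on $\Sigma/H$ ``as in Lemmas~\ref{lem_weak}--\ref{lem_isometry}'', you need a strongly coherent S-map from $X$ into \emph{some} metric space (the role played by $\mathbb U$ in Section~\ref{finite}). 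The existence of such a coherent globalization is plausible but not immediate from ultrahomogeneity alone and deserves an argument.

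For context, the published proofs you would be reproving do not proceed by establishing RZ for $\Sigma$. Siniora--Solecki \cite{S2} obtain coherent EPPA from a strengthened Herwig--Lascar theorem combined with free amalgamation; Rosendal's and Solecki's earlier arguments are likewise structural rather than via a direct RZ claim for the universal group of the partial-isometry groupoid. So even if your outline could be completed, it would amount to proving a new group-theoretic fact (RZ for $\Sigma$) that the existing literature avoids.
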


The following notion of coherence is slightly weaker than the notion of strongly coherent but is sufficient for our study of ultraextensive metric spaces in subsequent sections.
\begin{defn}\label{coherence}
Let $X_1\subseteq X_2$ be metric spaces and $(Y_i,\phi_i)$ be a $P_i$-type S-extension of $X_i$ for $i=1,2$ where $P_1\subseteq P_2$. We say that $(Y_1,\phi_1)$ and $(Y_2, \phi_2)$ are {\it coherent} if 
\begin{enumerate}
\item[(i)] $Y_2$ extends $Y_1$, 
\item[(ii)] $\phi_2(p)$ extends $\phi_1(p)$ for all $p\in P_1\subseteq P_2$, and
\item[(iii)] letting $K_i=\phi_i(\mathbb{F}(P_i))\leq \mbox{Iso}(Y_i)$ for $i=1,2$, and letting $\kappa:K_1\to K_2$ be such that $\kappa(\phi_1(p))=\phi_2(p)$ for all $p\in P_1$, then $\kappa$ has a unique extension to a group isomorphic embedding from $K_1$ into $K_2$.
\end{enumerate}
\end{defn}

The following lemma makes it precise that the notion of strong coherence is a stronger notion than coherence.

\begin{lem}\label{strong} Let $X_1\subseteq X_2$ be finite metric spaces and $(Y_1, \phi_1)$ be a $P_1$-type S-extension of $X_1$. Let $P_2\supseteq P_1$ be such that $P_2=P_2^{-1}$ and $X_2\setminus\{a_0\}\subseteq P_2(a_0)$. Suppose $(Y_2, \phi)$ is a strongly coherent S-extension of $X_2\cup Y_1$. Then there is $\phi_2$ such that $(Y_2, \phi_2)$ is a $P_2$-type S-extension of $X_2$ which is coherent with $(Y_1, \phi_1)$.
\end{lem}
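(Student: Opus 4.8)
The plan is to build $\phi_2$ directly from the strongly coherent S-map $\phi$ on $X_2\cup Y_1$, using strong coherence to reconcile the two S-maps $\phi_1$ and $\phi\upharpoonright\mathcal P_{X_2}$ on their common domain. First I would observe that for each $p\in P_2$, $p$ is a partial isometry of $X_2\subseteq X_2\cup Y_1$, so $\phi(p)\in\mbox{Iso}(Y_2)$ is defined and extends $p$; the natural candidate is $\phi_2(p):=\phi(p)$ for all $p\in P_2$. Then conditions (i) and (ii) of Definition~\ref{coherence} need checking: (i) is immediate since $Y_2\supseteq X_2\cup Y_1\supseteq Y_1$. For (ii) I need $\phi_2(p)=\phi(p)$ to extend $\phi_1(p)$ for every $p\in P_1$. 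This is where strong coherence enters: for $p\in P_1$, the full isometry $\phi_1(p)\in\mbox{Iso}(Y_1)$ is itself a partial isometry of $X_2\cup Y_1$ (its domain and range are the finite set $Y_1$), and $p\subseteq\phi_1(p)$ as partial isometries; writing $r=\phi_1(p)$ and noting $r\circ p = p \cdot$ appropriately (more carefully, $\phi_1(p)$ restricted to $\mbox{dom}(p)$ equals $p$, so one gets a composition identity among partial isometries of $X_2\cup Y_1$), strong coherence of $\phi$ forces $\phi(\phi_1(p))$ to be compatible with $\phi(p)$ in a way that yields $\phi(p)\supseteq\phi_1(p)$. The cleanest version: since $\phi_1(p)$ is a partial isometry of $X_2\cup Y_1$ and $\phi$ is strongly coherent and extends the identity-type relations, $\phi(\phi_1(p))$ extends $\phi_1(p)$; and since $p\subseteq\phi_1(p)$ one also has, for the triple $(\phi_1(p), \iota, p)$ where $\iota$ is the inclusion-type partial isometry, that $\phi(p)$ and $\phi(\phi_1(p))$ agree on $\mbox{dom}(p)$, giving $\phi_1(p)\subseteq\phi(p)=\phi_2(p)$ on $Y_1$; a short argument then upgrades agreement on generators to agreement on all of $Y_1$.

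Next I would verify (iii). Set $K_i=\phi_i(\mathbb F(P_i))$ and define $\kappa\colon K_1\to K_2$ on generators by $\kappa(\phi_1(p))=\phi_2(p)=\phi(p)$. To see $\kappa$ is well-defined and extends to a homomorphism, note that $\phi$, being strongly coherent, induces a genuine group homomorphism $\Theta\colon\mathbb F(\mathcal P_{X_2\cup Y_1})\to\mbox{Iso}(Y_2)$ respecting all compositions, and in particular $\phi_2=\phi\upharpoonright P_2$ extends to a group homomorphism $\mathbb F(P_2)\to\mbox{Iso}(Y_2)$ with image $K_2$. Composing the inclusion $\mathbb F(P_1)\hookrightarrow\mathbb F(P_2)$ with this homomorphism and comparing with $\phi_1\colon\mathbb F(P_1)\to K_1$, the previous paragraph's identity $\phi_2(p)\upharpoonright Y_1=\phi_1(p)$ shows that restricting any element of $K_2$ of the form $\phi_2(g)$ ($g\in\mathbb F(P_1)$) to $Y_1$ recovers $\phi_1(g)$. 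Hence the restriction map $K_2\supseteq\kappa(K_1)\to\mbox{Iso}(Y_1)$, $\psi\mapsto\psi\upharpoonright Y_1$, is a two-sided inverse to $\kappa$ on its image, so $\kappa$ is an injective homomorphism; uniqueness of the extension is automatic since $P_1$ generates $K_1$.

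The main obstacle I anticipate is the compatibility argument in (ii): carefully setting up the right triple(s) of partial isometries of $X_2\cup Y_1$ so that strong coherence of $\phi$ delivers $\phi_2(p)\upharpoonright Y_1=\phi_1(p)$ rather than merely $\phi_2(p)\supseteq p$. The subtlety is that $\phi_1(p)$ is an isometry of the \emph{finite} space $Y_1$, hence a legitimate partial isometry of $X_2\cup Y_1$, and one must check that the composition relations it participates in (e.g. $\phi_1(p)\circ\phi_1(p)^{-1}=1_{Y_1}$, $\phi_1(p)\circ\phi_1(q)=\phi_1(pq)$ when the latter is a partial isometry, and $\phi_1(p)\upharpoonright\mbox{dom}(p)=p$) are exactly the relations strong coherence preserves, so that $\Theta(\phi_1(p))$ simultaneously extends $p$ and agrees with $\Theta(p)=\phi(p)$ on the common part; then a residual-finiteness-free direct argument (both are isometries of the finite set $Y_1$ agreeing on a generating subset under the group action) closes the gap. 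Everything else is bookkeeping with group homomorphisms and restriction maps.
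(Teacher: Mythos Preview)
Your approach is workable but takes a detour that the paper avoids. You set $\phi_2(p)=\phi(p)$ for all $p\in P_2$ and then spend effort arguing that $\phi(p)\supseteq\phi_1(p)$ for $p\in P_1$. The paper instead defines
\[
\phi_2(p)=\begin{cases}\phi(\phi_1(p)),& p\in P_1,\\ \phi(p),& p\in P_2\setminus P_1,\end{cases}
\]
which makes (ii) immediate: $\phi_1(p)\in\mbox{Iso}(Y_1)$ is itself a partial isometry of $X_2\cup Y_1$, so $\phi(\phi_1(p))\supseteq\phi_1(p)$ by definition of an S-extension, no triple-chasing needed. For (iii) the paper simply observes that strong coherence makes $\phi\!\upharpoonright\!\mbox{Iso}(Y_1)$ a group homomorphism into $\mbox{Iso}(Y_2)$ (since $\mbox{Iso}(Y_1)$ is closed under composition of partial isometries), and it is injective because each $\phi(g)$ extends $g$; restricting to $K_1\leq\mbox{Iso}(Y_1)$ finishes.

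Your route does reach the same destination, because the triple $(\phi_1(p),\iota,p)$ with $\iota=1_{\mathrm{dom}(p)}$ and strong coherence force $\phi(\phi_1(p))\cdot\phi(\iota)=\phi(p)$, and $\phi(\iota)=1_{Y_2}$ since $\iota\circ\iota=\iota$; hence your $\phi_2$ and the paper's coincide. But note that this step already gives $\phi(p)=\phi(\phi_1(p))$ as global isometries of $Y_2$, not merely agreement ``on $\mbox{dom}(p)$'', so the further ``short argument upgrading agreement on generators to all of $Y_1$'' you anticipate is both unnecessary and, as you phrase it, unsound: two isometries of $Y_1$ extending the same $p$ need not agree on $Y_1$ (e.g.\ in an equilateral triangle, a transposition and a $3$-cycle both extend a single-point map). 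Drop that fallback and rely solely on the triple argument---or, more simply, adopt the paper's definition of $\phi_2$ on $P_1$ and the obstacle disappears entirely.
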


\begin{proof} Let $(Y_2, \phi)$ be a strongly coherent S-extension of $X_2\cup Y_1$. Define $\phi_2: P_2\to \mbox{Iso}(Y_2)$ by
$$ \phi_2(p)=\left\{\begin{array}{ll} \phi(\phi_1(p)), &\mbox{ if $p\in P_1$,}\\ \phi(p), &\mbox{ if $p\in P_2\setminus P_1$.}
\end{array}\right. $$
Then $(Y_2, \phi_2)$ is obviously a $P_2$-type S-extension of $X_2$. The construction also guarantees that $Y_1\subseteq Y_2$ and that $\phi_1(p)\subseteq \phi_2(p)$ for $p\in P_1$. Since $(Y_2, \phi)$ is a strongly coherent S-extension of $Y_1$, $\phi$ restricted to $\mbox{Iso}(Y_1)$ is a group isomorphism embedding from $\mbox{Iso}(Y_1)$ into $\mbox{Iso}(Y_2)$. When further restricted to $K_1=\phi_1(\mathbb{F}(P_1))$, it gives a group isomorphic embedding into $K_2=\phi_2(\mathbb{F}(P_2))$.
\end{proof}

\subsection{A characterization of coherent S-extensions}

Although the existence of coherent S-extensions follows from results of Solecki \cite{S09} \cite{R} \cite{S2} by Lemma~\ref{strong}, we introduce a notion of coherent extensions for groups and utilize this notion to give a characterization of all possible minimal coherent S-extensions. 

Let $X_1\subseteq X_2$ be finite metric spaces, $(Y_1, \phi_1)$ be a minimal $P_1$-type S-extension of $X_1$, and $P_1\subseteq P_2\subseteq \mathcal{P}_{X_2}$ where $P_2=P_2^{-1}$ and $X_2\setminus \{a_0\}\subseteq P_2(a_0)$. Let $(Y_2, \phi_2)$ be a minimal $P_2$-type S-extension of $X_2$ that is coherent with $(Y_1, \phi_1)$. Next, we characterize all such coherent S-extensions.

\begin{defn}
Let $X_1\subseteq X_2$ be finite metric spaces, $(Y_1, \phi_1)$ be a $P_1$-type S-extension of $X_1$ and $P_1\subseteq P_2\subseteq \mathcal{P}_{X_2}$ where $P_2=P_2^{-1}$ and $X_2\setminus \{a_0\}\subseteq P_2(a_0)$. Let $N_1=\mbox{ker}(\phi_1)$. We say $N_2\unlhd\, \mathbb{F}(P_2)$ is a {\it coherent extension of $N_1$} if it is a feasible $P_2$-type prekernel for $X_2$ and satisfies the following conditions:
\begin{enumerate}
\item[(D1)] $N_1=N_2\,\cap\, \mathbb{F}(P_1)$;
\item[(D2)] For every $g, h, k, l\in\mathbb{F}(P_1)$ such that $$d_{Y_1}(\phi_1(g)(a_0), \phi_1(h)(a_0))\neq d_{Y_1}(\phi_1(k)(a_0), \phi_1(l)(a_0)),$$ we have $N_2\cap gH_2k^{-1}l H_2h^{-1}=\emptyset$;
\item[(D3)] For every $g, h\in \mathbb{F}(P_1)$ and $p,q\in P_2$ with both $p(a_0)$ and $q(a_0)$ defined, if $$d_{Y_1}(\phi_1(g)(a_0),\phi_1(h)(a_0))\neq d_{X_2}(p(a_0),q(a_0)),$$
we have $N_2\cap gH_2p^{-1}qH_2h^{-1}=\emptyset$.
\end{enumerate}

\end{defn}

Note that since $N_2$ is a feasible prekernel, letting $G_2=\Phi_{N_2}(\mathbb{F}(P_2))$, for any $G_2$-invariant pseudometric $\rho_2$ on $\Gamma_{N_2}$ which is consistent with $w_{N_2}$, $(\overline{\Gamma_{N_2}}, \overline{\Phi_{N_2}})$ is a minimal S-extension of $X_2$.

\begin{thm}\label{thmcoherent}
Let $X_1\subseteq X_2$ be finite metric spaces, $(Y_1, \phi_1)$ be a minimal $P_1$-type S-extension of $X_1$, and $P_1\subseteq P_2\subseteq \mathcal{P}_{X_2}$ where $P_2=P_2^{-1}$ and $X_2\setminus \{a_0\}\subseteq P_2(a_0)$.
\begin{enumerate}
\item[(i)] Let $(Y_2,\phi_2)$ be a $P_2$-type S-extension of $X_2$ that is coherent with $(Y_1,\phi_1)$. Then $N_2=\mbox{ker}(\phi_2)$ is a coherent extension of $N_1=\mbox{ker}(\phi_1)$.

\item [(ii)] Let $N_2\unlhd\, \mathbb{F}(P_2)$ be a coherent extension of $N_1=\mbox{ker}(\phi_1)$. Then letting $G_2=\Phi_{N_2}(\mathbb{F}(P_2))$, there exists a $G_2$-invariant pseudometric $\rho_2$ on $\Gamma_{N_2}$ which is consistent with $w_{N_2}$,  such that $(Y_2,\phi_2)=(\overline{\Gamma_{N_2}}, \overline{\Phi_{N_2}})$ is coherent with $(Y_1, \phi_1)$.
\end{enumerate} 
\end{thm}

\begin{proof} We first prove (i). Let $(Y_2, \phi_2)$ be a $P_2$-type S-extension of $X_2$ that is coherent with $(Y_1,\phi_1)$. Let $N_2=\mbox{ker}(\phi_2)$. Then by Theorem \ref{lem_onto}, there is a pseudometric $\rho_2$ on $\Gamma_{N_2}$ such that $(Y_2,\phi_2)$ is isomorphic to $(\overline{\Gamma_{N_2}}, \overline{\Phi_{N_2}})$. By Lemma~\ref{lemc123}, $N_2$ is a feasible prekernel. Next we show that $N_2$ is a coherent extension of $N_1$. 

For (D1), note that since $(Y_2, \phi_2)$ is coherent with $(Y_1,\phi_1)$, we have $\phi_1(\mathbb{F}(P_1))\cong \phi_2(\mathbb{F}(P_1))$ via the map $\phi_1(g)\mapsto \phi_2(g)$. Thus, 
\[
N_2\,\cap\, \mathbb{F}(P_1)=\mbox{ker}(\phi_2)\,\cap\, \mathbb{F}(P_1)=\mbox{ker}(\phi_1)=N_1.
\]

For (D2), we need to verify that if for $g, h, k, l\in\mathbb{F}(P_1)$
$$d_{Y_1}(\phi_1(g)(a_0), \phi_1(h)(a_0))\neq d_{Y_1}(\phi_1(k)(a_0), \phi_1(l)(a_0)),$$ 
then $N_2\cap gH_2k^{-1}l H_2h^{-1}=\emptyset$. Toward a contradiction, assume
there is $n\in N_2\cap gH_2k^{-1}l H_2h^{-1}$. Then there are $\eta, \eta'\in H_2$ with $k\eta^{-1}g^{-1}n=l \eta' h^{-1}$, which implies 
$$ \phi_2(k\eta^{-1}g^{-1}n)=\phi_2(l \eta' h^{-1}).$$ 
From the definitions of $H_2$ and of $N_2$, if we apply the left-hand-side element to $\phi_2(g)(a_0)=\phi_1(g)(a_0)$, the resulting value is $\phi_2(k)(a_0)=\phi_1(k)(a_0)$. Similarly, if we apply the right-hand-side element to $\phi_2(h)(a_0)=\phi_1(h)(a_0)$, the resulting value is $\phi_2(l)(a_0)=\phi_1(l)(a_0)$. Thus, both sides of the equation represent the same partial isometry of $Y_1$ with $\phi_1(g)(a_0)$ and $\phi_1(h)(a_0)$ in its domain and with $\phi_1(k)(a_0)$ and $\phi_1(l)(a_0)$ in its range. We conclude that $d_{Y_1}(\phi_1(g)(a_0), \phi_1(h)(a_0))=d_{Y_1}(\phi_1(k)(a_0), \phi_1(l)(a_0))$, a contradiction.

The argument for (D3) is similar. This finishes the proof of (i).
      
For (ii), let $G_1=\Phi_{N_1}(\mathbb{F}(P_1))$ and, by Theorem~\ref{lem_onto}, let $\rho_1$ be a $G_1$-invariant pseudometric that is consistent with $w_{N_1}$ such that $(\overline{\Gamma_{N_1}}, \overline{\Phi_{N_1}})\cong (Y_1,\phi_1)$. For notational simplicity we assume $(Y_1, \phi_1)=(\overline{\Gamma_{N_1}}, \overline{\Phi_{N_1}})$. Let $N_2\unlhd\, \mathbb{F}(P_2)$ be a coherent extension of $N_1=\mbox{ker}(\phi_1)$. Since $N_2$ is a feasible prekernel, one can define $\Gamma_{N_2}$, $w_{N_2}$, and $\Phi_{N_2}$ as before. 

Define a map $\pi:\Gamma_{N_1}\rightarrow \Gamma_{N_2}$ by letting $\pi(gN_1H_1)=gN_2H_2$ for all $g\in\mathbb{F}(P_1)$. To see $\pi$ is well-defined, note that if $gN_1H_1=g'N_1H_1$, then $g^{-1}g'\in N_1H_1\leq N_2H_2$, and therefore  $gN_2H_2=g'N_2H_2$. 

Recall that $w_{N_1}$ is defined for pairs $(gpN_1H_1, gqN_1H_1)$ where $g\in\mathbb{F}(P_1)$ and $p, q\in P_1$ with $p(a_0)$ and $q(a_0)$ defined, and its value is $d_{X_1}(p(a_0), q(a_0))$. Let $\pi(w_{N_1})$ on $\pi(\Gamma_{N_1})$ be the push-forward weight function, that is, $$\pi(w_{N_1})(gpN_2H_2, gqN_2H_2)=w_{N_1}(gpN_1H_1, gqN_1H_1).$$ Note that (D2) implies that $\pi(w_{N_1})$ is well-defined. Also note that $w_{N_2}$ coincides with $\pi(w_{N_1})$ on $\pi(\Gamma_{N_1})$. In fact, $w_{N_2}$ is defined in the same way on the image of such pairs under $\pi$, that is, on pairs of the form $(gpN_2H_2, gqN_2H_2)$ for $g\in \mathbb{F}(P_1)\subseteq \mathbb{F}(P_2)$ and $p, q\in P_1\subseteq P_2$.

Recall that $\Phi_{N_2}$ is defined by $\Phi_{N_2}(p)(gN_2H_2)=pgN_2H_2$ for all $g\in \mathbb{F}(P_2)$ and $p\in P_2$, and is extended to a group homomorphism from $\mathbb{F}(P_2)$ to the symmetric group of $\Gamma_{N_2}$. Let $G_2=\Phi_{N_2}(\mathbb{F}(P_2))$.

We are now ready to define a $G_2$-invariant pseudometric $\rho_2$ on $\Gamma_{N_2}$ that is consistent with $w_{N_2}$ and satisfies $\rho_2\upharpoonright \pi(\Gamma_{N_1})=\pi(\rho_1)$. Here $\pi(\rho_1)$ is the pseudometric on $\pi(\Gamma_{N_1})$ defined by $\pi(\rho_1)(gN_2H_2,hN_2H_2)=\rho_1(gN_1H_1,hN_1H_1)$ for $g, h\in \mathbb{F}(P_1)$. Note that (D2) implies that $\pi(\rho_1)$ is well-defined. 

We define $\rho_2$ as follows. First, for $g, h\in\mathbb{F}(P_1)$, define
\begin{align*}
\rho_2(gN_2H_2, hN_2H_2)&=\pi(\rho_1)(gN_2H_2, hN_2H_2)\\ &=\rho_1(gN_1H_1, hN_1H_1)=d_{Y_1}(\phi_1(g)(a_0), \phi_1(h)(a_0)).
\end{align*}
Next, for $p, q\in P_2$ with $p(a_0)$ and $q(a_0)$ defined, and for $\gamma\in \mathbb{F}(P_2)$, define
$$ \rho_2(\gamma pN_2H_2, \gamma qN_2H_2)=w_{N_2}(\gamma pN_2H_2, \gamma qH_2N_2)=d_{X_2}(p(a_0), q(a_0)). $$
To see that these do not conflict with each other, note that (D3) implies that for $g,h\in\mathbb{F}(P_1)$ and $p, q\in P_2$ with both $p(a_0)$ and $q(a_0)$ defined, if 
$$ d_{Y_1}(\phi_1(g)(a_0),\phi(h)(a_0))\neq d_{X_2}(p(a_0), q(a_0)), $$
then there is no $\gamma\in \mathbb{F}(P_2)$ with $\gamma pN_2H_2=gN_2H_2$ and $\gamma qN_2H_2=hN_2H_2$. We continue to define $\rho_2$ so that if $g, h\in\mathbb{F}(P_2)$ and $\rho_2(gN_2H_2, hN_2H_2)$ is already defined, then we define 
$$ \rho_2(\gamma gN_2H_2, \gamma hN_2H_2)=\rho_2(gN_2H_2, hN_2H_2) $$
for any $\gamma\in\mathbb{F}(P_2)$. To see that this does not create a conflict among the existing definitions of $\rho_2$ values, note that condition (D2) implies that for any $g, h, k, l\in\mathbb{F}(P_1)$, if there is $\gamma\in\mathbb{F}(P_2)$ such that $\gamma gN_2H_2=kN_2H_2$ and $\gamma hN_2H_2=lN_2H_2$, then 
$$ d_{Y_1}(\phi_1(g)(a_0), \phi_1(h)(a_0))=d_{Y_1}(\phi_1(k)(a_0), \phi_1(l)(a_0)). $$
To complete the definition of $\rho_2$, we consider the existing values of $\rho_2$ as a weight function and define $\rho_2$ to be the path pseudometric. Since the weight function is $G_2$-invariant, it follows from the definition of the path pseudometric that the resulting $\rho_2$ is also $G_2$-invariant. 

Since for every $g,h\in \mathbb{F}(P_1)$ we have $$\rho_2(gN_2H_2,hN_2H_2)=\pi(\rho_1)(gN_2H_2,hN_2H_2)=\rho_1(gN_1H_1,hN_1H_1),$$ we also have that $\overline{\rho_2}\upharpoonright \overline{\pi(\Gamma_{N_1})}=\overline{\pi(\rho_1)}$. Note that (D2) with $l,k=1$ implies that the induced map $\overline{\pi}:\overline{\Gamma_{N_1}}=\overline{\Gamma_{N_1}}^{\rho_1}\rightarrow \overline{\Gamma_{N_2}}=\overline{\Gamma_{N_2}}^{\rho_2}$ is an isometric embedding. Thus $\overline{\Gamma_{N_1}}\cong \overline{\pi(\Gamma_{N_1})}$ is a subspace of $\overline{\Gamma_{N_2}}$.
 
Letting $Y_2=\overline{\Gamma_{N_2}}$ and $\phi_2=\overline{\Phi_{N_2}}$. We have that $(Y_2, \phi_2)$ is a $P_2$-type S-extension of $X_2$ and $Y_1\subseteq Y_2$ via the isomorphism of $(Y_1,\phi_1)$ with $(\overline{\Gamma_{N_1}}, \overline{\Phi_{N_1}})$. To see the coherence of $(Y_2, \phi_2)$ with $(Y_1,\phi_1)$, let $p\in P_1$. Then 
$\Phi_{N_1}(p)(gN_1H_1)=pgN_1H_1$ for all $g\in\mathbb{F}(P_1)$ and 
$\Phi_{N_2}(p)(gN_2H_2)=pgN_2H_2$ for all $g\in\mathbb{F}(P_2)$. Via the induced embedding $\overline{\pi}:\overline{\Gamma_{N_1}}\rightarrow \overline{\Gamma_{N_2}}$ and the isomorphism of $(Y_1,\phi_1)$ with $(\overline{\Gamma_{N_1}}, \overline{\Phi_{N_1}})$, and because $\rho_2\upharpoonright \pi(\Gamma_{N_1})=\pi(\rho_1)$, we have $\phi_1(p)\subseteq \phi_2(p)$. Finally, it is clear that the map $\phi_1(p)\mapsto \phi_2(p)$ for all $p\in P_1$ generates a group isomorphic embedding from $G_1$ to $G_2$.
\end{proof}

\subsection{A construction of coherent prekernel extensions}
The existence of finite coherent S-extensions via Lemma~\ref{strong} and Theorem \ref{thmcoherent} imply the existence of coherent prekernel extensions. In this subsection we provide a direct construction of coherent prekernel extensions of finite index. 

\begin{lem}\label{Ucoherent}
Let $X_1\subseteq X_2$ be finite metric spaces, $(Y_1, \phi_1)$ be a $P_1$-type S-extension of $X_1$ and $P_1\subseteq P_2\subseteq \mathcal{P}_{X_2}$ where $P_2=P_2^{-1}$ and $X_2\setminus \{a_0\}\subseteq P_2(a_0)$. Then there exists a $P_2$-type S-map $\phi_{\mathbb{U}}: \mathbb{F}(P_2)\to \mbox{\rm Iso}(\mathbb{U})$ such that $(\mathbb{U},\phi_{\mathbb{U}})$ is a $P_2$-type S-extension of $X_2$ which is coherent with $(Y_1,\phi_1)$.
\end{lem}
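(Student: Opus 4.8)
The plan is to combine the universality and ultrahomogeneity of $\mathbb{U}$ with a transfinite back-and-forth argument, using the minimal S-extension $(Y_1,\phi_1)$ as a ``seed''. First I would note that since $\mathbb{U}$ is universal and ultrahomogeneous, we may assume $X_2\cup Y_1$ sits inside $\mathbb{U}$ (after identifying $X_1$ and $Y_1$ with isometric copies). The group $K_1=\phi_1(\mathbb{F}(P_1))\le\mathrm{Iso}(Y_1)$ acts by isometries on the finite metric space $Y_1\subseteq\mathbb{U}$. The strategy is to extend this action, together with the partial isometries in $P_2\setminus P_1$, to an action of $\mathbb{F}(P_2)$ on all of $\mathbb{U}$ by isometries.

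The key steps, in order, are as follows. \emph{Step 1.} Let $\phi_2^0:\mathbb{F}(P_2)\to\mathrm{Iso}(Y_1\cup X_2)$ be the partial data: on words in $P_1$ it is $\phi_1$, and each $p\in P_2\setminus P_1$ contributes its (partial) action on $X_2$. \emph{Step 2.} Enumerate $\mathbb{U}$ as $\{u_n:n<\omega\}$ and carry out a back-and-forth: at each stage one has a finite $A\subseteq\mathbb{U}$ containing the seed and a partial $\mathbb{F}(P_2)$-action on $A$ that is ``coherent'' in the sense that it is generated by $\phi_1$ and the generators $P_2$; one must enlarge $A$ so that some $u_n$ enters the domain of each generator (and its inverse) and so that the generators still act as isometries. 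The enlargement is where $\mathbb{U}$'s universality is used: one appeals to the amalgamation property of finite metric spaces (the Katětov/one-point extension) to find, abstractly, a finite metric space $B\supseteq A$ on which $\mathbb{F}(P_2)$ acts extending the action on $A$ and with $u_n$ now in the relevant domains, then uses ultrahomogeneity of $\mathbb{U}$ to realize $B$ inside $\mathbb{U}$ fixing $A$ pointwise. \emph{Step 3.} Taking the union of all finite stages yields $\phi_{\mathbb{U}}:\mathbb{F}(P_2)\to\mathrm{Iso}(\mathbb{U})$; restricting to $P_2$ gives the $P_2$-type S-map, and since every point of $\mathbb{U}$ was eventually put in the domain of every generator, each $\phi_{\mathbb{U}}(p)$ is a genuine (surjective) isometry. \emph{Step 4.} Verify coherence: (i) and (ii) hold by construction (the seed $Y_1$ is never moved out and $\phi_1(p)\subseteq\phi_{\mathbb{U}}(p)$); for (iii), one must check that the induced map $\kappa:K_1\to K_2=\phi_{\mathbb{U}}(\mathbb{F}(P_2))$, $\phi_1(p)\mapsto\phi_{\mathbb{U}}(p)$, extends to a well-defined injective homomorphism. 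Well-definedness: if a word $g$ in $P_1$ has $\phi_1(g)=1_{Y_1}$, then $g$ lies in $\mathrm{ker}(\phi_1)$ and we must ensure we never forced $\phi_{\mathbb{U}}(g)\neq 1$; this is arranged by requiring, throughout the back-and-forth, that the partial action restricted to words in $P_1$ agree with $\phi_1$ on $Y_1$ and act ``freely modulo $\mathrm{ker}(\phi_1)$'' elsewhere — concretely, one builds the action on the new points as the left-translation action of $\mathbb{F}(P_2)/\mathrm{ker}(\phi_{\mathbb{U}})$, mirroring the $(\Gamma_N,\Phi_N)$ construction of Section~\ref{finite}. Injectivity of $\kappa$ then amounts to $\mathrm{ker}(\phi_{\mathbb{U}})\cap\mathbb{F}(P_1)=\mathrm{ker}(\phi_1)$, which one secures by never collapsing more of $\mathbb{F}(P_1)$ than $\phi_1$ already does.

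The main obstacle is precisely controlling the kernel so that (iii) holds: the back-and-forth must be engineered so that at no finite stage does the partial $\mathbb{F}(P_2)$-action identify two points that would force a relation among the $\phi_{\mathbb{U}}(p)$, $p\in P_1$, not already present in $K_1$. The clean way to do this is to fix in advance the ``target'' quotient $\Gamma$-style object: take the free-group construction $\mathbb{F}(P_2)/H_2$ from Section~\ref{finite} but quotient $\mathbb{F}(P_2)$ only by the normal closure of $\mathrm{ker}(\phi_1)$ (so that the $P_1$-part is faithfully $K_1$), equip the resulting coset space with its path metric, check via a Herwig--Lascar/Ribes--Zalesskii-type argument (as in the proof of Lemma~\ref{reducedw} and the conditions (C1)--(C3)) that it isometrically contains $X_2$ and $Y_1$ compatibly, and then embed this countable metric space densely into $\mathbb{U}$ using universality plus ultrahomogeneity; the left-translation action descends to the desired $\phi_{\mathbb{U}}$. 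With that scaffolding, conditions (i)--(iii) of Definition~\ref{coherence} are immediate, the isomorphic embedding $\kappa$ being induced by the inclusion $\mathbb{F}(P_1)/\mathrm{ker}(\phi_1)\hookrightarrow\mathbb{F}(P_2)/\mathrm{ker}(\phi_{\mathbb{U}})$.
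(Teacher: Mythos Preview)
The paper's proof is essentially a one-liner: it invokes Uspenskij's theorem (the Kat\v{e}tov tower construction) to obtain an isometric embedding $Y_1\hookrightarrow\mathbb{U}$ together with a \emph{group} embedding $j:\mathrm{Iso}(Y_1)\hookrightarrow\mathrm{Iso}(\mathbb{U})$ satisfying $j(\varphi)\supseteq\varphi$ for every $\varphi\in\mathrm{Iso}(Y_1)$. One then sets $\phi_{\mathbb{U}}(p)=j(\phi_1(p))$ for $p\in P_1$, uses ultrahomogeneity to realize $X_2\supseteq X_1$ inside $\mathbb{U}$, and picks arbitrary extensions of $p$ for $p\in P_2\setminus P_1$. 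Condition (iii) of Definition~\ref{coherence} is then automatic: since $j$ is a group homomorphism, $\phi_{\mathbb{U}}(g)=j(\phi_1(g))$ for every word $g\in\mathbb{F}(P_1)$, so $\kappa=j|_{K_1}$ is a group embedding by construction.

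Your back-and-forth does not secure exactly this point without essentially reproving Uspenskij. The obstacle you name --- well-definedness of $\kappa$, i.e.\ $\ker(\phi_1)\subseteq\ker(\phi_{\mathbb{U}}|_{\mathbb{F}(P_1)})$ --- is real, and a naive back-and-forth fails it: if at each stage you merely extend each generator $p\in P_1$ to a larger partial isometry, a relation $\phi_1(p_1)\cdots\phi_1(p_n)=1_{Y_1}$ will typically \emph{not} lift to $\phi_{\mathbb{U}}(p_1)\cdots\phi_{\mathbb{U}}(p_n)=1_{\mathbb{U}}$, since the extensions act independently off $Y_1$. Your fix (``act freely modulo $\ker(\phi_1)$ elsewhere'') means that for every new point you must simultaneously and compatibly build its entire $K_1$-orbit with the correct metric; carrying this out for all of $\mathbb{U}$ is precisely the content of the Kat\v{e}tov construction, which you do not invoke.

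Your ``clean way'' via a coset space also has a gap. In the paper the analogous coset construction (Theorem~\ref{thmcoherent}) \emph{uses} the present lemma as input: the map $\phi_{\mathbb{U}}$ is what verifies conditions (D1)--(D6), and the proof of Lemma~\ref{reducedw} that you cite likewise relies on a pre-existing S-extension in $\mathbb{U}$, not on Ribes--Zalesskii. So you cannot import that machinery here; you would need an independent argument that your weight function on $\mathbb{F}(P_2)/NH_2$ is well-defined and reduced and that $Y_1$ (not just $X_2$) embeds isometrically, none of which you have supplied. The short route is to cite Uspenskij.
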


\begin{proof} Following Uspenskij's proof in \cite{U}, which uses the Katetov construction of $\mathbb{U}$ to show that the isometry group of every Polish space can be embedded into $\mbox{Iso}(\mathbb{U})$ (see also Sections 1.2 and 2.5 of \cite{G} for details), we obtain an isometric embedding $i: Y_1\to \mathbb{U}$ and a group isomorphic embedding $j:\mbox{Iso}(Y_1)\to\mbox{Iso}(\mathbb{U})$ such that for every $\varphi\in \mbox{Iso}(Y_1)$, $j(\varphi)\supseteq \varphi$. In addition, from the ultrahomogeneity of $\mathbb{U}$ we obtain an isometric copy of $X_2$ in $\mathbb{U}$ as a superset of $X_1$. Now for each $p\in P_2$, let $\phi_{\mathbb{U}}(p)\in \mbox{Iso}(\mathbb{U})$ be an extension of $p$ guaranteed to exist by the ultrahomogeneity of $\mathbb{U}$ such that if $p\in P_1$, then $\phi_{\mathbb{U}}(p)=j(\phi_1(p))$. Then $\phi_{\mathbb{U}}$ is as required.  
\end{proof}

\begin{prop}\label{finiteindexcoherent}
Suppose $X_1\subseteq X_2$ are finite metric spaces and $P_1\subseteq P_2\subseteq \mathcal{P}_{X_2}$ where $P_2=P_2^{-1}$ and $X_2\setminus \{a_0\}\subseteq P_2(a_0)$. Let $(Y_1,\phi_1)$ be a finite $P_1$-type S-extension of $X_1$ and $N_1=\mbox{ker}(\phi_1)$. Then, there exists a coherent extension, $N_2\unlhd\, \mathbb{F}(P_2)$, of $N_1$ of finite index.
\end{prop}

\begin{proof}
By Lemma~\ref{lemc123}, $N_1=\mbox{ker}(\phi_1)$ is a feasible prekernel. We may define $w_{N_1}$ and $\Phi_{N_1}$ and let $G_1=\Phi_{N_1}(\mathbb{F}(P_1))$ and $\Gamma_{N_1}=\mathbb{F}(P_1)/N_1H_1$. Since $(Y_1,\phi_1)$ is a minimal $P_1$-type S-extension of $X_1$, by Theorem \ref{lem_onto}, there is a $G_1$-invariant pseudometric $\rho_1$ on $\Gamma_{N_1}$ such that it is consistent with $w_{N_1}$, $Y_1$ is isometric to $(\overline{\Gamma_{N_1}}, \overline{\rho_1})$ and $(Y_1,\phi_1)$ is isomorphic to $(\overline{\Gamma_{N_1}}, \overline{\Phi_{N_1}})$. 

Since $P_1\subseteq P_2$, we have that all of $N_1$, $H_1$ and $\mathbb{F}(P_1)$ are subgroups of $\mathbb{F}(P_2)$. We will find a coherent extension, ${N_2\trianglelefteqslant  \mathbb{F}(P_2)}$, of $N_1$ of finite index.

Let $\mathcal{G}=G_1*\mathbb{F}(P_2\setminus P_1)$ be the free product of $G_1$ with $\mathbb{F}(P_2\setminus P_1)$. We define a group homomorphism $\psi:\mathbb{F}(P_2)\rightarrow \mathcal{G}$ by letting
\[
\psi(p)=\begin{cases} 
                \Phi_{N_1}(p), &\text{ if } p\in P_1,\\
                p,  \ \ \ \ \ \ &\text{ otherwise}
          \end{cases} \\
 \]
for all $p\in P_2$. Since $H_2$ is a finitely generated subgroup of $\mathbb{F}(P_2)$, $\psi(H_2)$ is a finitely generated subgroup of $\mathcal{G}$. We will find $M\unlhd \mathcal{G}$ of finite index and set $N_2=\psi^{-1}(M)$. To guarantee that $N_2$ is a coherent extension of $N_1$, we need $M$ to satisfy the following corresponding conditions:
\begin{enumerate}
\item[(R1)] For every $p, q, r, s\in P_2\cup\{1\}$ such that $d_{X_2}(p(a_0), q(a_0))\neq d_{X_2}(r(a_0),s(a_0))$, we have $M\cap \psi(p)\psi(H_2)\psi(r)^{-1}\psi(s)\psi(H_2)\psi(q)^{-1}=\emptyset$;
\item[(R2)] For every $p, q\in P_2\cup\{1\}$, if $p(a_0)$ and $q(a_0)$ are defined and $p(a_0)\neq q(a_0)$, we have $M\cap \psi(p)\psi(H_2)\psi(q)^{-1}=\emptyset$;
\item[(R3)] For every $p, q, r_1, s_1, \dots, r_n, s_n\in P_2\cup\{1\}$ such that
$$ d_{X_2}(p(a_0),q(a_0))>\sum_{i=1}^n d_{X_2}(r_i(a_0), s_i(a_0)), $$
we have $$M\cap \psi(p)\psi(H_2)\psi(r_1)^{-1}\psi(s_1)\psi(H_2)\cdots \psi(H_2)\psi(r_n)^{-1}\psi(s_n)\psi(H_2)\psi(q)^{-1}=\emptyset;$$
\item[(S1)] $M\cap G_1=\{1\}$;
\item[(S2)] For every $g, h, k, l\in\mathbb{F}(P_1)$ such that $$d_{Y_1}(\phi_1(g)(a_0), \phi_1(h)(a_0))\neq d_{Y_1}(\phi_1(k)(a_0), \phi_1(l)(a_0)),$$ we have $M\cap \psi(g)\psi(H_2)\psi(k)^{-1}\psi(l)\psi(H_2)\psi(h)^{-1}=\emptyset$;
\item[(S3)] For every $g, h\in \mathbb{F}(P_1)$ and $p,q\in P_2$ with both $p(a_0)$ and $q(a_0)$ defined, if $$d_{Y_1}(\phi_1(g)(a_0),\phi_1(h)(a_0))\neq d_{X_2}(p(a_0),q(a_0)),$$
we have $M\cap \psi(g)\psi(H_2)\psi(p)^{-1}\psi(q)\psi(H_2)\psi(h)^{-1}=\emptyset$.
\end{enumerate}
To see that (S1) implies (D1), note that $N_2\cap\mathbb{F}(P_1)=\psi^{-1}(M)\cap(\psi^{-1}(G_1)\cap \mathbb{F}(P_1))=(\psi^{-1}(M)\cap\psi^{-1}(G_1))\cap \mathbb{F}(P_1)=\psi^{-1}(M\cap G_1)\cap \mathbb{F}(P_1)=\mbox{ker}(\psi)\cap \mathbb{F}(P_1)=N_1$. The other conditions for $M$ obviously imply the corresponding conditions for $N_2$. Note also that each of conditions (R1), (R2) and (R3) is a finite collection of conditions of the form $\gamma M\cap L_1\cdots L_n=\emptyset$ for $\gamma\in \mathcal{G}$ and finitely generated subgroups $L_1,\dots, L_n$ (in fact each $L_i$ is a conjugate of $\psi(H_2)$) with $\gamma\not\in L_1\cdots L_n$. Since $G_1$ is finite, condition (S1) is also a finite collection of conditions of the form $\gamma M\cap \{1\}=\emptyset$ for nonidentity $\gamma\in G_1$. Conditions (S2) and (S3) appear to be about infinitely many elements in $\mathbb{F}(P_1)$. However, since $G_1=\psi(\mathbb{F}(P_1))$ is finite, they all end up being about finitely many elements of $G_1$, and so each of (S2) and (S3) is still a finite collection of conditions of the form $\gamma M\cap L_1\cdots L_n=\emptyset$ for finitely generated subgroups $L_1,\dots, L_n$. We verify that in each case, $\gamma\not\in L_1\cdots L_n$.


Using the $P_2$-type S-map $\phi_{\mathbb{U}}$ from Lemma~\ref{Ucoherent}, we note that for any $g\in \mathbb{F}(P_2)$, if $\psi(g)=1$, then $\phi_{\mathbb{U}}(g)=1$. This follows from the definition of $\psi$ and of $\phi_{\mathbb{U}}$.

For (R1), we need to verify that $1\notin \psi(p)\psi(H_2)\psi(r)^{-1}\psi(s)\psi(H_2)\psi(q)^{-1}$. Toward a contradiction, if $1\in \psi(p)\psi(H_2)\psi(r)^{-1}\psi(s)\psi(H_2)\psi(q)^{-1}$, then there exist $\eta_1,\eta_2\in H_2$ such that $\psi(p\eta_1r^{-1}s\eta_2q^{-1})=1$. Let $\alpha=s\eta_2q^{-1}$ and $\beta=r\eta_1^{-1}p^{-1}$. Then $\psi(\alpha)=\psi(\beta)$ and therefore $\phi_{\mathbb{U}}(\alpha)=\phi_{\mathbb{U}}(\beta)$. Since $\eta_1,\eta_2\in H_2$, $\phi_{\mathbb{U}}(\alpha)(q(a_0))=s(a_0)$ and $\phi_{\mathbb{U}}(\beta)(p(a_0))=r(a_0)$. Now since $\phi_{\mathbb{U}}(\alpha)=\phi_{\mathbb{U}}(\beta)$ is an isometry, we should have $d_{X_2}(p(a_0), q(a_0))= d_{X_2}(r(a_0),s(a_0))$.

For (R2), similar argument shows that if $\alpha=p\eta_1q^{-1}$, then $\phi_{\mathbb{U}}(\alpha)(q(a_0))=p(a_0)$. Now since $\psi(\alpha)=1$, $\phi_{\mathbb{U}}(\alpha)=1$ and therefore $q(a_0)=p(a_0)$.

For (R3), if $$1\in \psi(p)\psi(H_2)\psi(r_1)^{-1}\psi(s_1)\psi(H_2)\cdots \psi(H_2)\psi(r_n)^{-1}\psi(s_n)\psi(H_2)\psi(q)^{-1}$$ then for some $h_1,\dots,h_{n+1}\in H_2$ we have
\[
1=\psi(p)\psi(h_1)\psi(r_1)^{-1}\psi(s_1)\psi(h_2)\cdots \psi(h_n)\psi(r_n)^{-1}\psi(s_n)\psi(h_{n+1})\psi(q)^{-1}.
\]
Consider the sequence 
$$\begin{array}{l}
b_0=\phi_{\mathbb{U}}(p)(a_0)=p(a_0), \\
b_1=\phi_{\mathbb{U}}(ph_1r_1^{-1}s_1)(a_0), \\
b_2=\phi_{\mathbb{U}}(ph_1r_1^{-1}s_1h_2r_2^{-1}s_2)(a_0), \\
\cdots\cdots \\
b_n=\phi_{\mathbb{U}}(ph_1r_1^{-1}s_1h_2\cdots r_n^{-1}s_n)(a_0)= \phi_{\mathbb{U}}(qh_{n+1}^{-1})(a_0)=q(a_0).
\end{array}
$$
We have 
\begin{align*}d_{\mathbb{U}}(b_0, b_1)&=d_{\mathbb{U}}(\phi_{\mathbb{U}}(r_1h_1^{-1}p^{-1})(b_0), \phi_{\mathbb{U}}(r_1h_1^{-1}p^{-1})(b_1))\\
&=d_{\mathbb{U}}(r_1(a_0), s_1(a_0))=d_{X_2}(r_1(a_0), s_1(a_0)),
\end{align*}
and similarly $d_{\mathbb{U}}(b_1, b_2)=d_{X_2}(r_2(a_0), s_2(a_0))$, $\dots$, $d_{\mathbb{U}}(b_{n-1}, b_n)=d_{X_2}(r_n(a_0), s_n(a_0))$. Thus
\[
d_{X_2}(p(a_0),q(a_0))\leq \sum_{i=1}^n d_{X_2}(b_{i-1}, b_i)=\sum_{i=1}^n d_{X_2}(r_i(a_0), s_i(a_0)).
\]

For (S2), we need to verify that if
$$d_{Y_1}(\phi_1(g)(a_0), \phi_1(h)(a_0))\neq d_{Y_1}(\phi_1(k)(a_0), \phi_1(l)(a_0)),$$ 
then $1\notin \psi(g)\psi(H_2)\psi(k)^{-1}\psi(l)\psi(H_2)\psi(h)^{-1}$. Toward a contradiction, assume
$1\in \psi(g)\psi(H_2)\psi(k)^{-1}\psi(l)\psi(H_2)\psi(h)^{-1}$. Then there are $\eta, \eta'\in H_2$ with $$\psi(g)\psi(\eta)\psi(k)^{-1}=\psi(h)\psi(\eta')\psi(l)^{-1}.$$ From the definitions of $H_2$ and of $\psi$, if we apply the left-hand-side element to $\psi(k)(a_0)=\phi_1(k)(a_0)$, the resulting value is $\psi(g)(a_0)=\phi_1(g)(a_0)$. Similarly, if we apply the right-hand-side element to $\psi(l)(a_0)=\phi_1(l)(a_0)$, the resulting value is $\psi(h)(a_0)=\phi_1(h)(a_0)$. Thus, both sides of the equation represent the same partial isometry of $Y_1$ with $\phi_1(k)(a_0)$ and $\phi_1(l)(a_0)$ in its domain and with $\phi_1(g)(a_0)$ and $\phi_1(h)(a_0)$ in its range. We conclude that $d_{Y_1}(\phi_1(g)(a_0), \phi_1(h)(a_0))=d_{Y_1}(\phi_1(k)(a_0), \phi_1(l)(a_0))$, a contradiction.

The argument for (S3) is similar.

Now by Coulbois' theorem (Theorem~\ref{Coulbois}), the group $\mathcal{G}=G_1*\mathbb{F}(P_2\setminus P_1)$ has property RZ. Thus, there exists $M\unlhd\, \mathcal{G}$ of finite index such that all conditions (R1)--(S3) are satisfied. Consequently, $N_2=\psi^{-1}(M)\unlhd \mathbb{F}(P_2)$ is a coherent extension of $N_1$ of finite index.
\end{proof}


\subsection{Extending isometry groups}

In this subsection we apply the algebraic method from the preceding subsection to obtain a construction of S-extensions with prescribed isometry groups. Since we deal with only finite isometry groups, it suffices to consider groups extended by one more generator.

\begin{thm}\label{oneextension}
Let $X_1$ be a finite metric space and $(Y_1,\phi_1)$ be a finite minimal $P_1$-type S-extension of $X_1$. Let $G_1=\phi_1(\mathbb{F}(P_1))$ and $G_2=\langle G_1,k \rangle$ be an overgroup of $G_1$ with one element $k\notin G_1$. Then there exist a finite metric space $X_2\supseteq X_1$, $l\in \mathcal{P}_{X_2}$ and a $P_2$-type S-extension $(Y_2,\phi_2)$ of $X_2$, where $P_2=P_1\cup\{l,l^{-1}\}$, such that $(Y_1,\phi_1)$ and $(Y_2,\phi_2)$ are coherent and $G_2\cong \phi_2(\mathbb{F}(P_2))$.
\end{thm}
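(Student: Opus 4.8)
The plan is to realize the abstract group $G_2$ concretely as a group of isometries of a finite metric space extending $Y_1$, via an induced action, and then to attach one new point and one new partial isometry to $X_1$ that ``names'' the generator $k$. Since $(Y_1,\phi_1)$ is minimal, $Y_1=G_1\cdot a_0$ is a single $G_1$-orbit, and $G_1=\phi_1(\mathbb{F}(P_1))$ acts faithfully on $Y_1$ as a subgroup of $\mbox{Iso}(Y_1)$; let $H_0=\mbox{Stab}_{G_1}(a_0)$, so $\bigcap_{g\in G_1}gH_0g^{-1}=\{1\}$. I would form the induced metric space
$$ Y_2:=G_2\times_{G_1}Y_1=(G_2\times Y_1)/{\sim},\qquad (g,y)\sim(gh^{-1},hy)\ \ (h\in G_1), $$
equipped with the metric that restores $d_{Y_1}$ inside each copy $\{[g,y]:y\in Y_1\}$ and puts a fixed distance $D$ (with $D\ge\mbox{diam}(Y_1)$ and $D>0$) between points of distinct copies. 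One readily checks that $d_{Y_2}$ is a metric (the triangle inequality only uses $D\ge\mbox{diam}(Y_1)$), that $G_2$ acts on $Y_2$ by isometries via $g'\cdot[g,y]=[g'g,y]$, and that $\{[1,y]:y\in Y_1\}$ is a $G_1$-invariant copy of $Y_1$ on which $G_1$ acts exactly as originally; hence $Y_2$ is a finite extension of $Y_1$. Moreover every $[g,y]$ lies in the $G_2$-orbit of $a_0:=[1,a_0]$ (write $y=ha_0$ with $h\in G_1$, so $[g,y]=[gh,a_0]$), the stabilizer of $a_0$ in $G_2$ is again $H_0$, and hence the kernel of the $G_2$-action on $Y_2$ is $\bigcap_{g\in G_2}gH_0g^{-1}\subseteq\bigcap_{g\in G_1}gH_0g^{-1}=\{1\}$. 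Thus $G_2\hookrightarrow\mbox{Iso}(Y_2)$.

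Next I would set $b:=k\cdot a_0=[k,a_0]$. Because $k\notin G_1$, the point $b$ lies in a copy distinct from $Y_1$, so $b\notin X_1$ and $b\ne a_0$. Let $X_2:=X_1\cup\{b\}$ with the metric inherited from $Y_2$ (it extends $d_{X_1}$ and satisfies $d_{X_2}(b,x)=D$ for every $x\in X_1$), let $l$ be the partial isometry of $X_2$ with $\mbox{dom}(l)=\{a_0\}$ and $l(a_0)=b$, and let $P_2=P_1\cup\{l,l^{-1}\}$. Then $l\in\mathcal{P}_{X_2}$, $P_2=P_2^{-1}$, and, since $l^{-1}(a_0)$ is undefined, $X_2\setminus\{a_0\}=(X_1\setminus\{a_0\})\cup\{l(a_0)\}\subseteq P_2(a_0)$, so $(X_2,a_0,P_2)$ is legitimate data. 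Finally define $\phi_2:P_2\to\mbox{Iso}(Y_2)$ by $\phi_2(p):=\phi_1(p)$, regarded now as an element of $G_1\le G_2\le\mbox{Iso}(Y_2)$, for $p\in P_1$, and $\phi_2(l):=k$, $\phi_2(l^{-1}):=k^{-1}$.

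The remaining verifications are routine. For $p\in P_1$, the isometry $\phi_2(p)$ maps the copy $Y_1$ into itself and agrees there with $\phi_1(p)$, which extends $p$; so $\phi_2(p)\supseteq\phi_1(p)\supseteq p$. And $\phi_2(l)\supseteq l$ since $k\cdot a_0=b=l(a_0)$. Hence $(Y_2,\phi_2)$ is a $P_2$-type S-extension of $X_2$. Extending $\phi_2$ to the group homomorphism $\mathbb{F}(P_2)\to\mbox{Iso}(Y_2)$, its image is $\langle\phi_1(\mathbb{F}(P_1)),k\rangle=\langle G_1,k\rangle=G_2$; so $\phi_2(\mathbb{F}(P_2))=G_2$, which is finite and $\cong G_2$, and $(Y_2,\phi_2)$ is minimal because $Y_2=G_2\cdot a_0=\phi_2(\mathbb{F}(P_2))\cdot a_0$. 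For coherence: $Y_2\supseteq Y_1$ and $\phi_2(p)\supseteq\phi_1(p)$ for $p\in P_1$ hold by construction, and $\phi_1(p)\mapsto\phi_2(p)$ is the restriction to generators of the inclusion $G_1\hookrightarrow G_2$, which is the unique group isomorphic embedding of $K_1=G_1$ into $K_2=G_2$ extending it.

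The single point that is not pure bookkeeping is the faithfulness of the induced $G_2$-action on $Y_2$, i.e., that enlarging $G_1$ to $G_2$ does not enlarge the pointwise stabilizer of the orbit of $a_0$; this is exactly where the elementary inclusion $\bigcap_{g\in G_2}gH_0g^{-1}\subseteq\bigcap_{g\in G_1}gH_0g^{-1}$ does the work. One could instead run the algebraic machinery behind Theorem~\ref{thmcoherent}, taking $N_2\unlhd\mathbb{F}(P_2)$ to be the kernel of the homomorphism $\mathbb{F}(P_2)\to G_2$ sending $p\mapsto\phi_1(p)$ for $p\in P_1$ and $l\mapsto k$; but then one must check that this prescribed $N_2$ satisfies conditions (C1)--(C3) for the space $X_2$ just constructed, a step the direct argument above avoids.
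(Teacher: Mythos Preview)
Your proof is correct and takes a genuinely different route from the paper's. The paper applies the algebraic machinery of Sections~3--5: it sets $X_2=Y_1\cup\{a\}$ with $a$ a new point at distance $\mbox{diam}(Y_1)$ from all of $Y_1$, defines $\phi_2:\mathbb{F}(P_2)\to G_2$ by $\phi_2\!\upharpoonright_{\mathbb{F}(P_1)}=\phi_1$ and $\phi_2(l)=k$, takes $N_2=\ker(\phi_2)$, and then verifies conditions (C0)--(C6) for $N_2$ so that $(\overline{\Gamma_{N_2}},\overline{\Phi_{N_2}})$ is the desired coherent extension. A key simplification in their setup is that $H_2=H_1$, which makes most of (C0)--(C6) reduce either to the already-known properties of $N_1$ or to the single fact $k\notin G_1$.

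Your argument bypasses the $\Gamma_N$ machinery entirely by building $Y_2$ directly as the induced $G_2$-space $G_2\times_{G_1}Y_1$ with the obvious ``copies at distance $D$'' metric. This is more elementary and makes the coherence and the isomorphism $\phi_2(\mathbb{F}(P_2))\cong G_2$ immediate; the only substantive point, as you note, is faithfulness of the $G_2$-action, which follows from the core-triviality inclusion $\bigcap_{g\in G_2}gH_0g^{-1}\subseteq\bigcap_{g\in G_1}gH_0g^{-1}=\{1\}$. Your choice $X_2=X_1\cup\{b\}$ is smaller than the paper's $X_2=Y_1\cup\{a\}$, but both satisfy the theorem. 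The paper in fact remarks after its proof that a combinatorial argument along the lines of \cite{EGLMM} and Rosendal \cite{R2} is possible; your induced-action construction is essentially that alternative. What the paper's approach buys is a demonstration that the canonical $(\Gamma_N,\Phi_N)$ framework handles prescribed isometry groups as a special case; what your approach buys is a short, self-contained proof independent of Sections~3--5.
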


\begin{proof} 
By Theorem \ref{lem_onto}, $Y_1$ is isometric to $(\overline{\Gamma_{N_1}}, \overline{\Phi_{N_1}})$ where $N_1=\mbox{\rm ker}(\phi_1)$. Let $X_2=Y_1\cup\{a\}$ be the one point extension of $Y_1$ where $d_{X_2}(a,b)=\mbox{diam}(Y_1)$ for every $b\in Y_1$. Let $P_2=P_1\cup\{l,l^{-1}\}$ where $l=\{(a_0,a)\}$ is the partial isometry that sends $a_0$ to $a$. We define a homomorphism $\phi_2:\mathbb{F}(P_2)\rightarrow G_2$ such that $\phi_2\upharpoonright_{\mathbb{F}(P_1)}=\phi_1$ and $\phi_2(l)=k$. Let $N_2=\mbox{\rm ker}(\phi_2)$. We claim there exists a $G_2$-invariant pseudometric $\rho_2$ on $\Gamma_{N_2}$ which is consistent with $w_{N_2}$,  such that $(Y_2,\phi_2)\cong (\overline{\Gamma_{N_2}}, \overline{\Phi_{N_2}})$ is as desired. Note that because of the definition of $X_2$ and $P_2$, $H_2=H_1$. By Theorem \ref{thmcoherent}, it suffices to show that $N_2$ is a coherent extension of $N_1$. 

\begin{enumerate}
\item[(C1)] For every $p, q, r, s\in P_2\cup\{1\}$ such that $d_{X_2}(p(a_0), q(a_0))\neq d_{X_2}(r(a_0),s(a_0))$, we have $N_2\cap pH_1r^{-1}sH_1q^{-1}=\emptyset$. If $p,q,r,s$ are different from $l$, then since $(\overline{\Gamma_{N_1}}, \overline{\Phi_{N_1}})$ is a $P_1$-type S-extension of $X_1$, by (C0) we have $N_2\cap pH_1r^{-1}sH_1q^{-1}=N_1\cap pH_1r^{-1}sH_1q^{-1}=\emptyset$. If one of $p,q,r,s$ is equal to $l$, then we have $\phi_2(ph_1r^{-1}sh_2q^{-1})=1$ for some $h_1,h_2\in H_1$. Since $l$ appears exactly once in $ph_1r^{-1}sh_2q^{-1}$, this means $\phi_2(l)=k\in G_1$, which is a contradiction. Other cases are obvious.

\item[(C2)] For every $p, q\in P_2\cup\{1\}$, if $p(a_0)$ and $q(a_0)$ are defined and $p(a_0)\neq q(a_0)$, we have $N_2\cap pH_1q^{-1}=\emptyset$. This is similar to (C1).
\item[(C3)] For every $p, q, r_1, s_1, \dots, r_n, s_n\in P_2\cup\{1\}$ such that
$$ d_{X_2}(p(a_0),q(a_0))>\sum_{i=1}^n d_{X_2}(r_i(a_0), s_i(a_0)), $$
we have $N_2\cap pH_1r_1^{-1}s_1H_1\cdots H_1r_n^{-1}s_nH_1q^{-1}=\emptyset$. This is also similar to (C1).
\item[(D1)] $N_1=N_2\,\cap\, \mathbb{F}(P_1)$. If $g\in N_2\,\cap\, \mathbb{F}(P_1)$ then $\phi_1(g)=\phi_2(g)=1$. Therefore, $g\in N_1$.
\item[(D2)] For every $g, h, k, l\in\mathbb{F}(P_1)$ such that $$d_{Y_1}(\phi_1(g)(a_0), \phi_1(h)(a_0))\neq d_{Y_1}(\phi_1(k)(a_0), \phi_1(l)(a_0)),$$ we have $N_2\cap gH_1k^{-1}l H_1h^{-1}=\emptyset$. This is a direct consequence of (D1).
\item[(D3)] For every $g, h\in \mathbb{F}(P_1)$ and $p,q\in P_2$ with both $p(a_0)$ and $q(a_0)$ defined, if $$d_{Y_1}(\phi_1(g)(a_0),\phi_1(h)(a_0))\neq d_{X_2}(p(a_0),q(a_0)),$$
we have $N_2\cap gH_1p^{-1}qH_1h^{-1}=\emptyset$. This is a direct consequence of (D1).
\end{enumerate}
\end{proof}

We remark that it is possible to give a combinatorial proof of Theorem \ref{oneextension}. This is done in \cite{EGLMM} Lemma 5.1, which was in turn motivated by a result of Rosendal (Lemma 16 of \cite{R2}). As in \cite{EGLMM}, Theorem \ref{oneextension} can be used to show that the Hall's universal locally finite group can be embedded as a dense subgroup of the isometry group of the Urysohn space.

\section{Ultraextensive Metric Spaces}

In this section we study ultraextensive metric spaces. 

\begin{defn}\label{uedef} A metric space $U$ is {\it ultraextensive} if 
\begin{enumerate}
\item[(i)] $U$ is ultrahomogeneous, i.e., there is a $\phi$ such that $(U,\phi)$ is an S-extension of $U$;
\item[(ii)] Every finite $X\subseteq U$ has a finite S-extension $(Y,\phi)$ where $Y\subseteq U$;
\item[(iii)] If $X_1\subseteq X_2\subseteq U$ are finite and $(Y_1, \phi_1)$ is a finite minimal S-extension of $X_1$ with $Y_1\subseteq U$, then there is a finite minimal S-extension $(Y_2,\phi_2)$ of $X_2$ such that $Y_2\subseteq U$ and $(Y_1,\phi_1)$ and $(Y_2,\phi_2)$ are coherent.
\end{enumerate}
\end{defn}

Motivated by Hrushovski \cite{H}, Solecki \cite{S} and Vershik \cite{V}, Pestov in \cite{P} introduced a notion of {\it Hrushovski--Solecki--Vershik property}, which is correspondent to the first two clauses of the above definition. He used the notion to study the nonexistence of uniform and coarse embeddings from the universal Urysohn metric space into reflexive Banach spaces. He also gave a proof of Solecki's theorem (Theorem~\ref{thm_S}) using Herwig--Lascar's theorem \cite{HL}. 

Recall that the random graph is the Fra\"{i}ss\'{e} limit of the class of all finite graphs. We equip it with the path metric and turn it into a metric space, which is denoted by $\mathcal{R}$.

\begin{prop}
The Urysohn space $\mathbb{U}$, the rational Urysohn space $\mathbb{Q}\mathbb{U}$ and the random graph $\mathcal{R}$ are ultraextensive.
\end{prop}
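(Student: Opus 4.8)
The plan is to treat $U=\mathbb{U},\ \mathbb{Q}\mathbb{U},\ \mathcal{R}$ uniformly, using only that each is ultrahomogeneous and universal for the countable members of a class $\mathcal{K}$ of finite metric spaces which is closed under the algebraic constructions underlying Theorem~\ref{thm_S} and Theorem~\ref{thmcoherent}: here $\mathcal{K}$ is the class of all finite metric spaces (for $\mathbb{U}$), of all finite metric spaces with rational distances (for $\mathbb{Q}\mathbb{U}$), and of all finite metric spaces with distances in $\{1,2\}$, i.e.\ all finite graphs with their path metric (for $\mathcal{R}$). With this in hand, clause (i) of Definition~\ref{uedef} is immediate: for each $p\in\mathcal{P}_U$ choose, by ultrahomogeneity, an isometry of $U$ extending $p$, and let $\phi$ be the resulting map; then $(U,\phi)$ is an S-extension of $U$.

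For clause (ii), let $X\subseteq U$ be finite, so $X\in\mathcal{K}$. The construction of Section~\ref{finite} (Solecki's theorem) produces a finite S-extension $(\Gamma_N,\Phi_N)$ of $X$; the point to verify is that its underlying space lies in $\mathcal{K}$. This holds because $d_{w_N}$ is the path metric of the weight function $w_N$, all of whose edge weights are distances occurring in $X$; hence all distances in $\Gamma_N$ are rational when those of $X$ are, and lie in $\{1,2\}$ when those of $X$ do, using that $B_{w_N}\le 2$ and that every finite sum of weights from $\{1,2\}$ is a positive integer, which is then truncated to $\{1,2\}$ by the path metric. Since $(\Gamma_N,\Phi_N)$ is a finite member of $\mathcal{K}$, universality embeds it in $U$ and ultrahomogeneity lets us move the embedding so that the copy of $X$ is the given one; transporting the S-map along the embedding yields a finite S-extension $(Y,\phi)$ of $X$ with $Y\subseteq U$.

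For clause (iii) the key idea is to enlarge $X_2$ before invoking the coherence theorem. Given finite $X_1\subseteq X_2\subseteq U$ and a finite minimal S-extension $(Y_1,\phi_1)$ of $X_1$ with $Y_1\subseteq U$, set $Z=X_2\cup Y_1\subseteq U$, a finite member of $\mathcal{K}$ with $X_1\subseteq Z$. Then $(Y_1,\phi_1)$ is a $\mathcal{P}_{X_1}$-type S-extension of $X_1$, $\mathcal{P}_{X_1}\subseteq\mathcal{P}_Z$, and $Z\setminus\{a_0\}\subseteq\mathcal{P}_Z(a_0)$, so Theorem~\ref{thmcoherent}, applied with $Z$ in place of $X_2$ and $\mathcal{P}_Z$ in place of $P_2$, yields a finite minimal $\mathcal{P}_Z$-type S-extension $(Y',\phi')$ of $Z$ coherent with $(Y_1,\phi_1)$; as in clause (ii), $Y'$ may be taken in $\mathcal{K}$, since the pseudometric built in that proof is again a path pseudometric of a weight function whose nonzero weights are distances in $Z$. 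Embedding the finite $\mathcal{K}$-space $Y'$ in $U$ and adjusting by an isometry of $U$, we may assume $Y'\subseteq U$ with $Z=X_2\cup Y_1$ occurring as the given copies. Finally, pass to the minimal subextension generated by $X_2$: set $Y_2=\{\phi'(g)(x):g\in\mathbb{F}(\mathcal{P}_{X_2}),\ x\in X_2\}\subseteq Y'\subseteq U$ and $\phi_2(p)=\phi'(p)\upharpoonright Y_2$ for $p\in\mathcal{P}_{X_2}$. Since $\mathcal{P}_{X_2}\subseteq\mathcal{P}_Z$, the set $Y_2$ is invariant under $\phi'(\mathbb{F}(\mathcal{P}_{X_2}))$, so $(Y_2,\phi_2)$ is a finite minimal S-extension of $X_2$, it has the correct induced metric on $X_2$, and $Y_2\subseteq U$.

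It remains to verify that $(Y_2,\phi_2)$ is coherent with $(Y_1,\phi_1)$; this is the step requiring the most care. Coherence of $(Y',\phi')$ with $(Y_1,\phi_1)$ gives $Y_1\subseteq Y'$, that $\phi'(p)$ extends $\phi_1(p)$ for every $p\in\mathcal{P}_{X_1}$, and hence that $\phi'(g)$ restricts to $\phi_1(g)$ on $Y_1$ for all $g\in\mathbb{F}(\mathcal{P}_{X_1})$; in particular each point $\phi_1(g)(a_0)$ of $Y_1$ equals $\phi'(g)(a_0)\in Y_2$, so $Y_1\subseteq Y_2$ and $\phi_2(p)=\phi'(p)\upharpoonright Y_2$ extends $\phi_1(p)$ for $p\in\mathcal{P}_{X_1}$. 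For the group-theoretic clause, let $\kappa'$ be the group isomorphic embedding of $K_1=\phi_1(\mathbb{F}(\mathcal{P}_{X_1}))$ into $\phi'(\mathbb{F}(\mathcal{P}_{X_1}))$ furnished by the coherence of $(Y',\phi')$ with $(Y_1,\phi_1)$, and compose it with the restriction homomorphism $\phi'(\mathbb{F}(\mathcal{P}_{X_1}))\to K_2=\phi_2(\mathbb{F}(\mathcal{P}_{X_2}))$, $\sigma\mapsto\sigma\upharpoonright Y_2$. The composite sends $\phi_1(p)\mapsto\phi_2(p)$; it is injective because $\kappa'(\xi)$ restricts to $\xi$ on $Y_1\subseteq Y_2$, so it acts trivially on $Y_2$ only when $\xi=1$; and it is the unique such homomorphism since the $\phi_1(p)$ generate $K_1$. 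This is exactly the data required by Definition~\ref{coherence}, and the proposition follows for all three spaces. The main obstacle is thus twofold: choosing the right object to feed to Theorem~\ref{thmcoherent} — namely the enlargement $Z=X_2\cup Y_1$, which forces $Y'$ and hence $Y_2$ into the correct position inside $U$ over the given copies of $X_2$ and $Y_1$ — and checking that cutting down to the subextension generated by $X_2$ preserves coherence, which works precisely because $Y_1$ remains a subspace of $Y_2$, so restriction to $Y_1$ detects non-identity elements of $K_2$.
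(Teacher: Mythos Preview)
Your proof is correct and follows essentially the same strategy as the paper: use ultrahomogeneity and universality of $U$ together with Theorem~\ref{thmcoherent}, and observe that the path-(pseudo)metric constructions only produce distances that are finite sums of existing distances (capped at $B_w$), so the relevant class $\mathcal{K}$ is preserved.

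The one place you do something the paper does not is in clause~(iii): the paper simply invokes Theorem~\ref{thmcoherent} and the remark after its proof about flexibility of the ambient setup, whereas you make the argument self-contained by first enlarging $X_2$ to $Z=X_2\cup Y_1$, applying Theorem~\ref{thmcoherent} with $Z$ and $\mathcal{P}_Z$, embedding the result in $U$ over the \emph{given} copies of $X_2$ and $Y_1$, and then cutting down to the minimal subextension generated by $\mathcal{P}_{X_2}$. This extra step is genuinely useful---it explains concretely why $Y_2$ can be positioned inside $U$ so that both the prescribed $X_2$ and the prescribed $Y_1$ sit inside it---and your verification that coherence survives the restriction (using that $Y_1\subseteq Y_2$ so restriction to $Y_2$ still detects nontrivial elements of $K_1$) is exactly what is needed. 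For $\mathcal{R}$ you rely on the built-in cap $B_w\le 2$ in the path metric to keep all distances in $\{1,2\}$, while the paper instead truncates distances $\ge 3$ to $2$ after the fact; these are equivalent.
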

\begin{proof} The ultraextensiveness for $\mathbb{U}$ follows directly from its universality and ultrahomogeneity, and from Theorem~\ref{thmcoherent}. 

The space $\mathbb{QU}$ is also ultrahomogeneous and universal for all finite metric spaces with rational distances. From our proof of Theorem~\ref{thm_S} it is clear that if $X$ is a finite metric space with rational distances, then there is a finite S-extension $(Y, \phi)$ of $X$ where the distances of $Y$ are finite sums of the distances in $X$, and therefore also rational. This implies clause (ii) of the definition of ultraextensiveness for $\mathbb{QU}$. The same observation applies to the proof of Theorem~\ref{thmcoherent}. Namely, in every construction of the proof of Theorem \ref{thmcoherent} we used the path (pseudo)metric to define new distances. Thus the distances in $Y_2$ are finite sums of distances in $Y_1\cup X_2$. Therefore, if distances in $X_1,X_2,Y_1$ are rational, then we can find $Y_2$ with rational distances. Together with the ultrahomogeneity and universality of $\mathbb{QU}$, this implies clause (iii) of the definition of ultraextensiveness for $\mathbb{QU}$.

Note that the random graph $\mathcal{R}$ as a metric space has only distances $0,1$ and $2$. In fact, two distinct vertices have distance $1$ if and only if they are connected with an edge. If we endow every finite graph with such a metric, namely, two distinct vertices have distance $1$ if they are connected with an edge, and have distance $2$ otherwise, then $\mathcal{R}$ as a metric space is ultrahomogeneous and universal for this class of finite metric spaces. Then clause (ii) of the definition of ultraextensiveness for $\mathcal{R}$ follows from this universality of $\mathcal{R}$ and from Hrushovski's theorem \cite{H}.  Finally, in Theorem \ref{thmcoherent}, if $X_1,X_2,Y_1$ are finite metric spaces coming from graphs, then they have distances $0,1$ and $2$, and our constructions give that the distances in $Y_2$ are natural numbers. Now if we redefine every distance $\geq 3$ to be $2$ in $Y_2$, then any isometry of $Y_2$ continues to be an isometry in this new metric, and from ultrahomogeneity and universality we again obtain clause (iii) of the definition of ultraextensiveness for $\mathcal{R}$. 
\end{proof}

\begin{thm}\label{cor_Urysohn_1}
Every countable metric space can be extended to a countable ultraextensive metric space. 
\end{thm}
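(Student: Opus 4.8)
The plan is to realize the ultraextensive overspace as the union of an increasing $\omega$-chain of countable metric spaces, meeting the three clauses of Definition~\ref{uedef} by a bookkeeping/amalgamation argument. Starting from the given countable metric space $M$, we build $M=M_0\subseteq M_1\subseteq M_2\subseteq\cdots$, each countable and a metric subspace of the next, and set $U=\bigcup_n M_n$. At stage $n$ we treat one task chosen by a fixed fair bookkeeping. A \emph{type-(ii) task} is a finite subspace $X\subseteq M_n$: we apply Solecki's theorem (Theorem~\ref{thm_S}) to get a finite S-extension $(Y,\phi)$ of $X$ and glue $Y$ onto $M_n$ by amalgamating over the finite common part $X$, declaring $d(y,z)=\min_{x\in X}\bigl(d_Y(y,x)+d_{M_n}(x,z)\bigr)$ for $y\in Y\setminus X$, $z\in M_n\setminus X$; a routine verification shows the result $M_{n+1}$ is a metric space containing $M_n$ and $Y$ isometrically. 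A \emph{type-(iii) task} is a triple $(X_1,X_2,(Y_1,\phi_1))$ with $X_1\subseteq X_2\subseteq M_n$ finite and $(Y_1,\phi_1)$ a finite minimal S-extension of $X_1$ with $Y_1\subseteq M_n$: we pass to the finite subspace $X_2^{\dagger}=X_2\cup Y_1$ of $M_n$, for which $X_1\subseteq Y_1\subseteq X_2^{\dagger}$, apply Theorem~\ref{thmcoherent} in the setting $X_1\subseteq Y_1\subseteq X_2^{\dagger}$ discussed in the remark following it to obtain a finite minimal S-extension $(Y_2,\phi_2^{\dagger})$ of $X_2^{\dagger}$ coherent with $(Y_1,\phi_1)$ with $Y_2\cap M_n=X_2^{\dagger}$, and glue $Y_2$ onto $M_n$ over $X_2^{\dagger}$ exactly as above. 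Each stage adds only finitely many points, so every $M_n$, and hence $U$, is countable.

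We arrange the bookkeeping so that every finite $X\subseteq U$ occurs as a type-(ii) task at some stage and every triple of the above form inside $U$ occurs as a type-(iii) task at some stage; this is possible since each such object already appears in some $M_n$ and only countably many are relevant at each finite stage. Distances are never altered once defined and glued pieces retain their metrics, so the finite S-extensions and coherent minimal S-extensions built along the way remain valid inside $U$; in particular clause (ii) of Definition~\ref{uedef} holds outright. For clause (iii), given $(X_1,X_2,(Y_1,\phi_1))$ in $U$, its type-(iii) task supplied a finite minimal S-extension $(Y_2,\phi_2^{\dagger})$ of $X_2^{\dagger}=X_2\cup Y_1$ inside $U$ coherent with $(Y_1,\phi_1)$; we restrict to the finite orbit $Z=\{\phi_2^{\dagger}(g)(x):g\in\mathbb{F}(\mathcal{P}_{X_2}),\ x\in X_2\}$ and set $\psi(p)=\phi_2^{\dagger}(p)\upharpoonright Z$ for $p\in\mathcal{P}_{X_2}$, getting a finite minimal S-extension $(Z,\psi)$ of $X_2$ inside $U$. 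Minimality of $(Y_1,\phi_1)$ over $X_1$ gives $Y_1\subseteq Z$ and $\phi_1(p)\subseteq\psi(p)$ for $p\in\mathcal{P}_{X_1}$; and since $\phi_1(p)\mapsto\phi_2^{\dagger}(p)$ is already an isomorphic embedding of $\phi_1(\mathbb{F}(\mathcal{P}_{X_1}))$ into $\phi_2^{\dagger}(\mathbb{F}(\mathcal{P}_{X_2^{\dagger}}))$ and restriction to $Z\supseteq Y_1$ is injective on $\phi_1(\mathbb{F}(\mathcal{P}_{X_1}))$, the composite $\phi_1(p)\mapsto\psi(p)$ is again an isomorphic embedding. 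Thus $(Z,\psi)$ witnesses clause (iii).

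Clause (i), ultrahomogeneity, then comes for free from clause (ii). Given a nonidentity partial isometry $q_0$ between finite subsets of $U$, put $X_0=\mbox{dom}(q_0)\cup\mbox{ran}(q_0)$ and fix an enumeration $U=\{u_0,u_1,\dots\}$. By clause (ii) $X_0$ has a finite S-extension $(Y_1,\phi^{(1)})$ inside $U$, and $q_1:=\phi^{(1)}(q_0)\in\mbox{Iso}(Y_1)$ extends $q_0$; inductively, the finite set $Y_n\cup\{u_{n-1}\}$ has a finite S-extension $(Y_{n+1},\phi^{(n+1)})$ inside $U$, and $q_{n+1}:=\phi^{(n+1)}(q_n)\in\mbox{Iso}(Y_{n+1})$ extends $q_n$ (here $q_n$ is viewed as a nonidentity partial isometry of $Y_n\cup\{u_{n-1}\}$), with $u_{n-1}\in Y_{n+1}$. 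Then $\bigcup_n q_n$ is an isometry of $U$ extending $q_0$, so every finite partial isometry of $U$ extends to a full isometry of $U$; choosing such an extension for each $p\in\mathcal{P}_U$ yields the map $\phi$ of clause (i).

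I expect the only genuinely delicate point to be the type-(iii) step. Theorem~\ref{thmcoherent} produces a coherent minimal S-extension as an abstract metric space, but to splice it into the chain we must realize it so that the copies of $Y_1$ and of $X_2$ already living in $M_n$ sit inside it isometrically, carrying the distances they already have; this forces us to enlarge the base to $X_2^{\dagger}=X_2\cup Y_1$ (invoking the case $X_1\subseteq Y_1\subseteq X_2^{\dagger}$ of the construction, noted in the remark after Theorem~\ref{thmcoherent}) and then to recover a minimal S-extension of $X_2$ itself by the orbit restriction above. Verifying that this restriction preserves all three requirements of coherence --- in particular that the induced homomorphism of isometry groups stays injective --- is the one step needing care, and it rests on the fact that $Y_1$ is contained in the orbit $Z$.
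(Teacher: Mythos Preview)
Your argument is correct, and it follows the same high-level template as the paper: build an $\omega$-chain of finite stages using Theorem~\ref{thmcoherent} as the engine, take the union, and verify clauses (i)--(iii). The implementations differ in two places worth noting. First, for clause~(iii) the paper pre-emptively constructs an intermediate layer $Z_n\supseteq Y_n$ at each stage that already contains a coherent extension $(E',\phi')$ for \emph{every} triple $D\subseteq D'\subseteq Y_n$ with $(E,\phi)$ a minimal S-extension of $D$ inside $Y_n$; your bookkeeping instead treats one triple at a time and, crucially, enlarges the base to $X_2^{\dagger}=X_2\cup Y_1$ before invoking Theorem~\ref{thmcoherent} (via the remark after it) and then recovers a minimal S-extension of $X_2$ itself by restricting to the $\mathbb{F}(\mathcal{P}_{X_2})$-orbit. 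That orbit-restriction step, together with your check that the composite $\phi_1(p)\mapsto\psi(p)$ remains an isomorphic embedding because $Y_1\subseteq Z$, is exactly the extra work your route requires; the paper sidesteps it by baking the coherent extensions into $Z_n$ directly. Second, for clause~(i) the paper reads ultrahomogeneity straight off the coherent tower $(Y_n,\phi_n)$, whereas you derive it from clause~(ii) alone by an iterated-extension argument --- a cleaner observation that makes explicit that (ii) already forces (i).
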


\begin{proof}
Let $X$ be a countable metric space. Write $X$ as an increasing union of finite metric spaces $F_n$ for $n=1,2,\dots$. For $n\geq 1$, inductively define increasing sequences of finite metric spaces $X_n$, $Y_n$ and $Z_n$ as follows. Let $X_1=F_1$ and $(Y_1, \phi_1)$ be a finite minimal S-extension of $X_1=F_1$. We define $Y_1\subseteq Z_1$ such that for every $D\subseteq D'\subseteq Y_1$ and a minimal S-extension of $D$, $(E,\phi)$, where $E\subseteq Y_1$, there exists a minimal S-extension of $D'$, $(E',\phi ')$, where $E'\subseteq Z_1$ and $(E,\phi)$ and $(E',\phi ')$ are coherent. Note that this is possible since there are only finitely many triples $(D,D',E)$ and for any such triple by Theorem \ref{thmcoherent} we can fix a coherent extension $E'$. Finally, to construct $Z_1$, we add $E'\setminus E$ to $Y_1$ for all $E'$ corresponding to the triple $(D,D',E)$ such that the union of the new points ($E'\setminus E$) and $E\subseteq Y_1$ is an isometric copy of $E'$. Then, this new set with the path metric is $Z_1$. Let $X_2$ be the metric space that is obtained by adding $F_2\setminus F_1$ to $Z_1$ such that the union of ($F_2\setminus F_1$) and $F_1$ is isometric to $F_2$ and the distance between points in $F_2\setminus F_1$ and $Z_1\setminus F_1$ comes from the path metric.

 In general, assume finite $Y_{n-1},\subseteq Z_{n-1}\subseteq X_n$ has been defined. Apply Theorem~\ref{thmcoherent} to find $(Y_n, \phi_n)$ a finite minimal S-extension of $X_n\supseteq X_{n-1}$ that is coherent with $(Y_{n-1}, \phi_{n-1})$. We use a similar construction to the construction of $Z_1$ from $Y_1$ to define $Z_n\supset Y_n$. Note that $Z_n$ has the property that every minimal S-extension in $Y_n$ (that is , $D\subseteq E\subseteq Y_n$ where $(E,\phi)$ is a minimal S-extension of $D$) has a coherent minimal S-extension in $Z_n$ for every $D\subseteq D'\subseteq Y_n$. Let $X_{n+1}$ be the metric space that is obtained by adding $F_{n+1}\setminus F_n$ to $Z_n$ such that the union of ($F_{n+1}\setminus F_n$) and $F_n$ is isometric to $F_{n+1}$ and the distance between points in $F_2\setminus F_1$ and $Z_1\setminus F_1$ comes from the path metric.
 
Let $Y$ be the union of the increasing sequence $Y_n$. We verify that $Y$ is ultraextensive. To verify Definition~\ref{uedef} (i), let $p\in\mathcal{P}_Y$. Then there is $n\geq 1$ such that $p\in \mathcal{P}_{X_n}$. Let $n_p$ be the least such $n$. Then for all $m\geq n_p$, $p\subseteq \phi_m(p)\subseteq \phi_{m+1}(p)$ by the coherence of $(Y_m,\phi_m)$ and $(Y_{m+1}, \phi_{m+1})$. Define $\phi(p)=\bigcup_{m\geq n_p}\phi_m(p)$. Then $\phi(p)$ is an isometry of $Y$ that extends $p$. 

For Definition~\ref{uedef} (ii), let $F\subseteq Y$ be finite. Then there is $n$ such that $F\subseteq X_n$, and it follows that $(Y_n, \phi_n\upharpoonright \mathcal{P}_F)$ is an S-extension of $F$.

Finally, for Definition~\ref{uedef} (iii), let $F\subseteq F'\subset Y$ be finite and assume that $(E, \psi)$ is a finite minimal S-extension of $F$ with $E\subseteq Y$. Then, there is a natural number $n$ such that $E\subseteq Y_n$. By the construction of $Z_n$, there exists a minimal S-extension of $F'$, $(E',\phi')$ (corresponding to the triple $(F,F',E)$), such that $E'\subseteq Z_n\subseteq Y$ and that $(E',\phi')$ is coherent with $(E, \phi)$. \qedhere
\end{proof}


\begin{thm}\label{cor_Urysohn_2}
Let $U$ be an ultraextensive metric space and $X\subseteq U$ be a countable subset. Then there exists a countable ultraextensive subset $Y\subseteq U$ with $X\subseteq Y$. 
\end{thm}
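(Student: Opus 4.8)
The plan is to run the same closure construction used in the proof of Theorem~\ref{cor_Urysohn_1}, but to carry out every step \emph{inside} $U$, invoking clauses (ii) and (iii) of the definition of ultraextensiveness (Definition~\ref{uedef}) for $U$ wherever the proof of Theorem~\ref{cor_Urysohn_1} adjoined new points with the path metric. The point is that those two clauses say exactly that the required finite S-extensions, and the required coherent finite minimal S-extensions, can always be found as subsets of $U$; so no abstract new points ever need to be created, and all metrics are simply the ones inherited from $U$. Since $U$ is already ultrahomogeneous, the containments coming out of clause (iii) for $U$ may be taken to be literal, which keeps the construction purely set-theoretic inside $U$.

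Concretely, write $X$ as an increasing union of finite subsets $F_1\subseteq F_2\subseteq\cdots$. I would build by recursion increasing \emph{finite} subsets $X_n\subseteq Y_n\subseteq Z_n\subseteq X_{n+1}$ of $U$, with S-maps $\phi_n$, maintaining the invariant that $(Y_n,\phi_n)$ is a finite minimal S-extension of $X_n$ with $Y_n\subseteq U$. Start with $X_1=F_1$ and take $(Y_1,\phi_1)$ to be the minimal part of a finite S-extension of $F_1$ inside $U$, which exists by clause (ii). Given $(Y_n,\phi_n)$, form $Z_n$ by closing off $Y_n$: there are only finitely many quadruples $(D,D',\mathcal{E},\chi)$ with $D\subseteq D'\subseteq Y_n$, $\mathcal{E}\subseteq Y_n$, and $(\mathcal{E},\chi)$ a finite minimal S-extension of $D$ (finiteness of $Y_n$ makes $\mbox{Iso}(\mathcal{E})$, and hence the set of possible $\chi$, finite), and for each such quadruple clause (iii) applied to $U$ gives a finite minimal S-extension $(\mathcal{E}',\chi')$ of $D'$ with $\mathcal{E}\subseteq\mathcal{E}'\subseteq U$ coherent with $(\mathcal{E},\chi)$; let $Z_n$ be the union of $Y_n$ with all these $\mathcal{E}'$, still a finite subset of $U$. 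Then set $X_{n+1}=Z_n\cup F_{n+1}\subseteq U$, and apply clause (iii) for $U$ once more, to $X_n\subseteq X_{n+1}$ with the minimal S-extension $(Y_n,\phi_n)$, to obtain a finite minimal S-extension $(Y_{n+1},\phi_{n+1})$ of $X_{n+1}$ with $Y_{n+1}\subseteq U$ coherent with $(Y_n,\phi_n)$. Finally put $Y=\bigcup_n Y_n$; it is countable, $Y\subseteq U$, and $X\subseteq Y$ since $F_n\subseteq X_n\subseteq Y_n$.

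It then remains to verify the three clauses of Definition~\ref{uedef} for $Y$. For (i): a partial isometry $p$ of $Y$ has finite domain and range, hence these lie in some $Y_m\subseteq X_{m+1}$, so $p\in\mathcal{P}_{X_k}$ for all large $k$; by coherence $\phi_k(p)\subseteq\phi_{k+1}(p)$ for such $k$, and $\phi(p):=\bigcup_k\phi_k(p)$ is a well-defined isometry of $Y$ extending $p$, so the map $p\mapsto\phi(p)$ witnesses that $Y$ is ultrahomogeneous. For (ii): a finite $F\subseteq Y$ lies in some $Y_n\subseteq X_{n+1}$, and $(Y_{n+1},\phi_{n+1}\upharpoonright \mathcal{P}_F)$ is a finite S-extension of $F$ with $Y_{n+1}\subseteq Y$. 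For (iii): given finite $F\subseteq F'\subseteq Y$ and a finite minimal S-extension $(E,\psi)$ of $F$ with $E\subseteq Y$, pick $n$ with $E\cup F'\subseteq Y_n$; then $(F,F',E,\psi)$ is one of the quadruples processed when building $Z_n$, so $Z_n\subseteq Y$ already contains a finite minimal S-extension $(E',\psi')$ of $F'$ coherent with $(E,\psi)$, as required.

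The construction is essentially forced once one reads off Definition~\ref{uedef}, and I expect no substantive new idea to be needed; the only thing to be careful about is the bookkeeping that makes the closure steps line up — that every quadruple relevant to clause (iii) for $Y$ is already handled at some finite stage $Z_n$ — together with the minor point, noted above, that the minimal S-extension produced by clause (iii) for $U$ can be taken to literally contain the given one (which is either immediate from the statement of Definition~\ref{uedef} or arranged using ultrahomogeneity of $U$). That bookkeeping, rather than anything deeper, will be the main content of the write-up.
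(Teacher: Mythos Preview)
Your proposal is correct and follows essentially the same approach as the paper: the paper's proof simply says to rerun the construction of Theorem~\ref{cor_Urysohn_1} using clauses (ii) and (iii) of Definition~\ref{uedef} for $U$ to obtain $Y_n$ and $Z_n$ inside $U$, and to set $X_{n+1}=F_{n+1}\cup Z_n$, which is precisely what you do. Your write-up is in fact more careful than the paper's sketch (e.g.\ tracking the S-map $\chi$ in the quadruples rather than just the triple $(D,D',E)$, and explicitly requiring $E\cup F'\subseteq Y_n$ in the verification of clause (iii)).
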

\begin{proof}
The proof is similar to that of Theorem \ref{cor_Urysohn_1}. The differences are that in the construction $Y_n$ and $Z_n$ are obtained by applying clauses (ii) and (iii) of the definition of ultraextensive metric space for $U$; and $X_{n+1}=F_{n+1}\cup Z_n$.
\end{proof}

 Pestov [\ref{Pes_B}] showed that $\mbox{Iso}(\mathbb{U})$ contains a countable dense locally finite subgroup. Solecki strengthened this result by showing that $\mbox{Iso}(\mathbb{QU})$ contains a countable dense locally finite subgroup. Rosendal [\ref{Ros_B}] presented a different proof of the result by Solecki. Here we note that such dense locally finite subgroups are present in the isometry group of every separable ultraextensive space.
\begin{thm}\label{locallyfinite}
For every separable ultraextensive metric space $U$, $\mbox{\rm Iso}(U)$ contains a dense locally finite subgroup.
\end{thm}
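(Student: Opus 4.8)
The plan is to realize $\mbox{Iso}(U)$ as the closure of a locally finite group built as the direct limit of a coherent tower of finite minimal S-extensions living inside $U$, where the tower is constructed to do two things at once: exhaust a countable dense subset of $U$, and capture --- exactly on finite sets --- a countable dense family of isometries of $U$.

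First I would fix a countable dense set $D=\{d_1,d_2,\dots\}\subseteq U$ and, using that $\mbox{Iso}(U)$ is separable, a countable dense set $\mathcal{D}\subseteq\mbox{Iso}(U)$; let $(f_1,F_1),(f_2,F_2),\dots$ enumerate all pairs with $f_n\in\mathcal{D}$ and $F_n\subseteq D$ finite. I would then build inductively increasing finite sets $X_n\subseteq U$ together with finite minimal S-extensions $(Y_n,\phi_n)$ of $X_n$ with $Y_n\subseteq U$, such that $(Y_n,\phi_n)$ is coherent with $(Y_{n+1},\phi_{n+1})$ and such that $X_n$ contains $d_n$, contains $Y_{n-1}$, and contains $F_n\cup f_n(F_n)$. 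The inductive step is a direct use of Definition~\ref{uedef}(iii): given the finite minimal S-extension $(Y_{n-1},\phi_{n-1})$ of $X_{n-1}$ inside $U$, set $X_n=Y_{n-1}\cup\{d_n\}\cup F_n\cup f_n(F_n)$, which is a finite subset of $U$ with $X_{n-1}\subseteq X_n$, and apply clause (iii) to obtain a coherent finite minimal S-extension $(Y_n,\phi_n)$ of $X_n$ with $Y_n\subseteq U$; clause (ii), followed by passage to the minimal subextension, supplies the base case.

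Next I would pass to the direct limit. Coherence gives $\phi_m(g)\subseteq\phi_{m+1}(g)$ for $g\in\mathbb{F}(\mathcal{P}_{X_n})$ and $m\geq n$, so $\phi_\infty(g):=\bigcup_{m\geq n}\phi_m(g)$ is a well-defined isometry of $Y_\infty:=\bigcup_n Y_n=\bigcup_n X_n$, and $\phi_\infty$ is a group homomorphism on $\bigcup_n\mathbb{F}(\mathcal{P}_{X_n})$; write $G=\phi_\infty\!\left(\bigcup_n\mathbb{F}(\mathcal{P}_{X_n})\right)\leq\mbox{Iso}(Y_\infty)$. Since $D\subseteq Y_\infty$, the set $Y_\infty$ is dense in $U$. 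The group $G$ is locally finite: for each $n$ the restriction $\phi_\infty(g)\mapsto\phi_\infty(g)\upharpoonright Y_n=\phi_n(g)$ embeds $\phi_\infty(\mathbb{F}(\mathcal{P}_{X_n}))$ into the finite group $\phi_n(\mathbb{F}(\mathcal{P}_{X_n}))$ --- injectivity here is precisely the well-definedness of the coherence isomorphisms, namely $\phi_n(g)=\phi_n(g')\Rightarrow\phi_m(g)=\phi_m(g')$ for all $m\geq n$ --- so $G$ is an increasing union of finite subgroups. Finally, for each $n$ the map $f_n\upharpoonright F_n$ is a partial isometry of $X_n$ (both $F_n$ and $f_n(F_n)$ lie in $X_n$), so $\phi_n(f_n\upharpoonright F_n)$ --- or the identity, when $f_n$ fixes $F_n$ pointwise --- extends it, and hence so does $g_n:=\phi_\infty(f_n\upharpoonright F_n)\in G$; in particular $g_n\upharpoonright F_n=f_n\upharpoonright F_n$.

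It remains to realize $G$ inside $\mbox{Iso}(U)$ and to verify density, and this is the step I expect to be the main obstacle. When $U$ is complete, each $g\in G$ is a surjective isometry of the dense subset $Y_\infty$ and therefore extends uniquely to an isometry $\bar g$ of $U$; then $g\mapsto\bar g$ is an injective homomorphism realizing $G$ as a locally finite subgroup of $\mbox{Iso}(U)$ with $\bar g_n\upharpoonright F_n=f_n\upharpoonright F_n$. (When $U$ is countable, one instead enumerates $U$ into the sets $X_n$ so that $Y_\infty=U$ and $G\leq\mbox{Iso}(U)$ outright; this covers $\mathbb{QU}$ and the random graph.) Density of $G$ in $\mbox{Iso}(U)$ then follows from the elements $g_n$: given $h\in\mbox{Iso}(U)$, a finite $F\subseteq U$ and $\epsilon>0$, choose a finite $F'\subseteq D$ with each point within $\epsilon/3$ of the corresponding point of $F$, choose $f\in\mathcal{D}$ with $d(fx,hx)<\epsilon/3$ for $x\in F'$, take the index $n$ with $(f_n,F_n)=(f,F')$, and note that $d(\bar g_n x,hx)=d(fx,hx)<\epsilon$ for all $x\in F$; as $\mbox{Iso}(U)$ is second countable this makes $\overline{G}=\mbox{Iso}(U)$. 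The genuinely delicate point is ensuring that the elements of $G$ act as honest isometries of $U$ when $U$ is uncountable but not complete: here one must pass to the completion $\widehat{U}$ (where $G$ extends to a locally finite subgroup of $\mbox{Iso}(\widehat{U})$) and use the full force of ultraextensiveness --- through Theorem~\ref{cor_Urysohn_2} and clause (i) of Definition~\ref{uedef} --- to recover a dense locally finite subgroup of $\mbox{Iso}(U)$ itself.
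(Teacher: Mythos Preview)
Your approach and the paper's are essentially the same idea: build a coherent tower of finite minimal S-extensions inside $U$ whose union is dense, take the direct limit to obtain a locally finite group $G$ acting on that union, and then verify $G$ is dense in $\mbox{Iso}(U)$. The paper packages the tower construction by invoking Theorem~\ref{cor_Urysohn_2} to first pass to a countable ultraextensive $Y\subseteq U$ containing a dense $\mathcal D$-invariant set $X$, and then quotes the increasing-union structure of $Y$ to produce $G=\bigcup_n\mbox{Iso}(Y_n)$; you instead build the tower by hand, keeping explicit bookkeeping of which partial isometries $(f_n\upharpoonright F_n)$ must be witnessed at each stage. Your version is more self-contained and makes the density argument explicit, while the paper's version is shorter but leans on the internal structure of the proof of Theorem~\ref{cor_Urysohn_2}. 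Both proofs share the same genuine lacuna you flag at the end: realizing $G$ inside $\mbox{Iso}(U)$ (rather than $\mbox{Iso}(\widehat U)$) when $U$ is neither countable nor complete is not handled, and the paper's sentence ``$\mbox{Iso}(Y)$ is dense in $\mbox{Iso}(U)$'' glosses over exactly this point.

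One small slip in your density computation: you write $d(\bar g_n x, hx)=d(fx,hx)$ for $x\in F$, but $\bar g_n$ agrees with $f$ only on $F'=F_n$, not on $F$. You need the triangle inequality through the $\epsilon/3$-close point $x'\in F'$:
\[
d(\bar g_n x, hx)\leq d(\bar g_n x,\bar g_n x')+d(\bar g_n x',hx')+d(hx',hx)<\tfrac{\epsilon}{3}+\tfrac{\epsilon}{3}+\tfrac{\epsilon}{3}=\epsilon,
\]
using $\bar g_n x'=fx'$ in the middle term.
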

\begin{proof}
Note that $\mbox{Iso}(U)$ has a countable dense subset $D$. Let $X\subseteq U$ be a countable dense subset with the property that for all $x\in X$ and $\varphi\in D$, $\varphi(x)\in X$. Apply Theorem~\ref{cor_Urysohn_2} to obtain a countable ultraextensive $Y\subseteq U$ with $X\subseteq Y$. Then $\mbox{Iso}(Y)$ is dense in $\mbox{Iso}(U)$. 

It suffices to show that $\mbox{Iso}(Y)$ contains a dense locally finite subgroup. As in the proof of Theorem~\ref{cor_Urysohn_2} we can write $Y$ as an increasing union $\bigcup_n Y_n$. We also have group isomorphic embeddings from each $\mbox{Iso}(Y_n)$ to $\mbox{Iso}(Y_{n+1})$. Let $G=\bigcup_n \mbox{Iso}(Y_n)$. Then it is clear that $G$ is locally finite and $G$ is dense in $\mbox{Iso}(Y)$.
 \end{proof}




\section{Compact Ultrametric Spaces\label{ultrametric}}
In this section we show that every compact ultrametric space can be extended to a compact ultraextensive ultrametric space.  We first study finite ultrametric spaces and show that the notions of homogeneity, ultrahomogeneity, and ultraextensiveness coincide on finite ultrametric spaces.

We will use the following fact about homogeneity for every minimal S-extension.

\begin{lem}\label{homogeneous}
Let $X$ be a metric space and $(Y,\phi)$ be a minimal S-extension of $X$. Then $Y$ is homogeneous.
\end{lem}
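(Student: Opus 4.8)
The plan is to derive homogeneity of $Y$ directly from minimality by showing that the group $\mbox{Iso}(Y)$ acts transitively on $Y$. The guiding idea is that, for $P=\mathcal{P}_X$, minimality says every point of $Y$ is obtained from the single distinguished point $a_0\in X\subseteq Y$ by applying an element lying in the range of the extended S-map $\phi$; hence any two points of $Y$ are related by such an element, and these elements are honest isometries of $Y$.

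Concretely, recall the convention adopted after Definition~\ref{minimal}: we regard $\phi$ as a group homomorphism $\phi:\mathbb{F}(P)\to\mbox{Iso}(Y)$, so that $\phi(g^{-1})=\phi(g)^{-1}$ and $\phi(gg')=\phi(g)\circ\phi(g')$ for all $g,g'\in\mathbb{F}(P)$. Fix arbitrary $y_1,y_2\in Y$. Since $(Y,\phi)$ is a minimal S-extension of $X$, Definition~\ref{minimal} yields words $g_1,g_2\in\mathbb{F}(P)$ with $y_i=\phi(g_i)(a_0)$ for $i=1,2$. Put $\sigma=\phi(g_2 g_1^{-1})=\phi(g_2)\circ\phi(g_1)^{-1}\in\mbox{Iso}(Y)$. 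Then
$$\sigma(y_1)=\phi(g_2)\bigl(\phi(g_1)^{-1}\bigl(\phi(g_1)(a_0)\bigr)\bigr)=\phi(g_2)(a_0)=y_2.$$
Since $y_1$ and $y_2$ were arbitrary, $\mbox{Iso}(Y)$ acts transitively on $Y$, i.e. $Y$ is homogeneous.

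I do not anticipate any genuine obstacle: this is essentially a one-line consequence of minimality combined with the fact that $\phi$ takes values in $\mbox{Iso}(Y)$. The only points needing a word of care are that $\phi$ must be used as a group homomorphism on $\mathbb{F}(P)$, so that $\phi(g_1)^{-1}$ lies in the relevant subgroup and the composition law applies (this is exactly the paper's standing convention), and that $\phi(g)(a_0)$ is meaningful because $a_0\in X$ and $Y$ extends $X$, hence $a_0\in Y$. Note that no finiteness of $X$ or $Y$ enters, so the argument applies verbatim in the general metric-space setting of the statement.
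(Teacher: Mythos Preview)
Your proof is correct and essentially identical to the paper's: pick $g_1,g_2\in\mathbb{F}(\mathcal{P}_X)$ with $\phi(g_i)(a_0)=y_i$ and observe that $\phi(g_2g_1^{-1})$ (or $\phi(g_1g_2^{-1})$ in the paper) carries one point to the other. The only cosmetic difference is the direction in which you move between $y_1$ and $y_2$.
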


\begin{proof}
Let $y_1,y_2\in Y$. Since $(Y,\phi)$ is minimal, there are $g_1,g_2\in \mathbb{F}(\mathcal{P}_X)$ such that $\phi(g_1)(a_0)=y_1$ and $\phi(g_2)(a_0)=y_2$. Hence, $\phi(g_1g_2^{-1})(y_2)=y_1$. Since $\phi(g_1g_2^{-1})$ is an isometry of $Y$, $Y$ is homogeneous.
\end{proof} 
\begin{thm}\label{generalultrametric}
Let $Y$ be a finite ultrametric space. Then the following are equivalent:
\begin{enumerate}
\item[(i)] $Y$ is homogeneous;
\item[(ii)] $Y$ is ultrahomogeneous;
\item[(iii)] $Y$ is ultraextensive.
\end{enumerate}
\end{thm}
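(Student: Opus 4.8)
The implications (iii) $\Rightarrow$ (ii) and (ii) $\Rightarrow$ (i) are immediate from the definitions: an ultraextensive space is ultrahomogeneous by Definition~\ref{uedef}(i), and an ultrahomogeneous space is homogeneous since the one-point partial isometry $\{(y_1,y_2)\}$ extends to a full isometry. It remains to prove (i) $\Rightarrow$ (ii) and (ii) $\Rightarrow$ (iii); both will come from the tree structure of a finite ultrametric space. Recall that if $Y$ is finite ultrametric with realized distances $0=r_0<r_1<\dots<r_k$, then each relation $d(x,y)\le r_i$ is an equivalence relation, and the balls (the equivalence classes, over all $i$) form a finite rooted tree $T_Y$ under reverse inclusion, with root $Y$; moreover $\mathrm{Iso}(Y)=\mathrm{Aut}(T_Y)$, since an isometry permutes balls of each radius and a tree automorphism preserves meets and hence distances. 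The structural fact I would establish first is: \emph{$Y$ is homogeneous if and only if $T_Y$ is balanced}, i.e.\ all leaves lie at depth $k$ and the number of children of an internal node depends only on its depth; equivalently $\mathrm{Iso}(Y)$ is the iterated wreath product $S_{n_1}\wr\dots\wr S_{n_k}$. The direction ($\Leftarrow$) is clear, as this wreath product is transitive on the leaves. For ($\Rightarrow$) I would induct on $k$: an isometry of $Y$ permutes the radius-$r_{k-1}$ balls $B_1,\dots,B_n$, so homogeneity forces them pairwise isometric; and if $x,y$ lie in a common $B_j$, any isometry sending $x$ to $y$ fixes $B_j$ setwise, so its restriction shows $B_j$ is homogeneous, and the induction hypothesis applies.

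Granting this, (i) $\Rightarrow$ (ii) is the standard fact that the automorphism group of a balanced rooted tree extends partial automorphisms: a partial isometry of $Y$ is exactly a meet-preserving partial map of $T_Y$, which one extends by induction on $k$ --- split its domain and range along the top level, extend within each radius-$r_{k-1}$ subtree by the induction hypothesis (these subtrees being mutually isometric), and pair up the leftover top-level balls by arbitrary isometries --- realizing the extension inside $\mathrm{Aut}(T_Y)=\mathrm{Iso}(Y)$. For (ii) $\Rightarrow$ (iii): clause (i) of Definition~\ref{uedef} is the hypothesis, and clause (ii) holds because $Y$ is finite (take $Y$ itself as the S-extension of any finite $X\subseteq Y$, with S-map obtained by extending each partial isometry of $X$ via ultrahomogeneity). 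For clause (iii), let $X_1\subseteq X_2\subseteq Y$ be finite and $(Z_1,\phi_1)$ a finite minimal S-extension of $X_1$ with $Z_1\subseteq Y$, and put $G_1=\phi_1(\mathbb{F}(\mathcal{P}_{X_1}))\le\mathrm{Iso}(Z_1)$. The plan is to first produce a group homomorphism $\iota\colon G_1\to\mathrm{Iso}(Y)$ with $\iota(g)\upharpoonright Z_1=g$ for every $g\in G_1$ (such an $\iota$ is automatically injective, since $\iota(g)=1_Y$ forces $g=1_{Z_1}$), then set $\phi_2(p)=\iota(\phi_1(p))$ for $p\in\mathcal{P}_{X_1}$ and $\phi_2(p)$ an arbitrary extension in $\mathrm{Iso}(Y)$ for $p\in\mathcal{P}_{X_2}\setminus\mathcal{P}_{X_1}$, let $G_2=\phi_2(\mathbb{F}(\mathcal{P}_{X_2}))\le\mathrm{Iso}(Y)$, and take $Z_2=G_2\cdot a_0\subseteq Y$ with $\phi_2$ restricted to act on $Z_2$. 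One then checks, using $\iota(g)\upharpoonright Z_1=g$ and the minimality of $(Z_1,\phi_1)$, that $X_2\subseteq Z_2$, $Z_1\subseteq Z_2$, that $(Z_2,\phi_2)$ is a minimal S-extension of $X_2$, and that $\phi_1(p)\mapsto\phi_2(p)$ extends to the embedding $\iota\colon G_1\hookrightarrow G_2$ --- i.e.\ $(Z_2,\phi_2)$ is coherent with $(Z_1,\phi_1)$.

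The main obstacle is the existence of $\iota$, which I would obtain from the more robust claim: \emph{if $Y$ is balanced, $Z\subseteq Y$ is any subspace and $G\le\mathrm{Iso}(Z)$, then the $G$-action on $Z$ extends to an action of $G$ on $Y$ by isometries} (equivalently, there is $\iota\colon G\to\mathrm{Iso}(Y)$ with $\iota(g)\upharpoonright Z=g$). This I would prove by induction on $k$, using $\mathrm{Iso}(Y)=\mathrm{Iso}(Y')\wr S_n$ along the radius-$r_{k-1}$ balls $B_1,\dots,B_n$, each isometric to the balanced space $Y'$ of height $k-1$. Since an isometry of $Z$ preserves the relation $d\le r_{k-1}$, $G$ acts on $\{B_i : B_i\cap Z\neq\emptyset\}$; extend this to an action of $G$ on all of $\{B_1,\dots,B_n\}$, and reassemble via the induced-representation construction: for each orbit pick a representative ball $B_{i_0}$ and coset representatives $g_i\in G$ with $g_i\cdot i_0=i$, extend the restricted action of $\mathrm{Stab}_G(i_0)$ on $Z\cap B_{i_0}$ to $B_{i_0}\cong Y'$ by the induction hypothesis, and --- the one step needing care --- choose the gluing isometries $\theta_i\colon B_{i_0}\to B_i$ so that $\theta_i$ extends the partial isometry $g_i\upharpoonright(Z\cap B_{i_0})$ (possible because $Y'$ is ultrahomogeneous, already proven), since this is exactly what makes the reassembled $G$-action agree with the original one on $Z$. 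The remaining verification that the reassembled map is a homomorphism is the routine cocycle bookkeeping of induced representations. Combining these implications completes the proof.
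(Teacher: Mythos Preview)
Your proof is correct and follows the same overall architecture as the paper's: both use the tree structure of finite ultrametric spaces, both prove (i)$\Rightarrow$(ii) by induction on the number of realized distances via the top-level ball decomposition, and both reduce clause (iii) of ultraextensiveness to the statement that the action of a subgroup $G\le\mathrm{Iso}(Z)$ on a subspace $Z\subseteq Y$ extends to an action of $G$ on $Y$ by isometries.

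The one genuine difference is in how that extension step is carried out. The paper invokes Lemma~\ref{homogeneous} to conclude that the minimal S-extension $Z_1$ is itself homogeneous, hence (by the already-proven (i)$\Rightarrow$(ii)) a balanced tree; it then writes both $Y$ and $Z_1$ as products $Y_1\times Y_2$ and $Y_{01}\times Y_{02}$ along the smallest distance, and assembles the embedding $\mathrm{Iso}(Z_1)\hookrightarrow\mathrm{Iso}(Y)$ coordinatewise from the inductive hypothesis applied to $Y_1$ and $Y_2$. Your argument instead proves the stronger claim that \emph{any} $G$-action on \emph{any} subspace $Z$ extends, without assuming $Z$ homogeneous, via the induced-representation construction inside the wreath product $\mathrm{Iso}(Y')\wr S_n$. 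This buys you a cleaner, more portable lemma (and avoids appealing to Lemma~\ref{homogeneous}), at the cost of the cocycle bookkeeping; the paper's route is more concrete but leans on the special structure of minimal S-extensions. Both are valid, and your careful choice of the gluing isometries $\theta_i$ extending $g_i\!\upharpoonright\!(Z\cap B_{i_0})$ --- using the ultrahomogeneity of $Y'$ already established --- is exactly the point that makes the induced action restrict correctly to $Z$.
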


\begin{proof}
(i)$\Rightarrow$(ii): Let $D(Y)=\{d(x, y)\,:\, x\neq y\in Y\}$. We prove this by induction on $|D(Y)|$. If $|D(Y)|=1$ then $Y$ is clearly ultrahomogeneous. Suppose $|D(Y)|>1$ and let $r$ be the least element of $D(Y)$. For each $x\in Y$ let $B_r(x)=\{y\in Y\,:\, d(x,y)\leq r\}=\{x\}\cup\{y\in Y\,:\, d(x,y)=r\}$. Then for any $x, y\in Y$, either $B_r(x)=B_r(y)$ or $B_r(x)\cap B_r(y)=\emptyset$. In the latter case, we also have that for any $z_1\in B_r(x)$ and $z_2\in B_r(y)$, $d(z_1, z_2)=d(x,y)$. Let $Y_1=\{B_r(x)\,:\, x\in Y\}$. Then $Y_1$ is a partition of $Y$. For disjoint $B_r(x)$ and $B_r(y)$, we define $d_1(B_r(x), B_r(y))=d(x,y)$. It is easy to check that $(Y_1,d_1)$ is again an ultrametric space, and $D(Y_1)=D(Y)\setminus \{r\}$.  If $\varphi\in \mbox{Iso}(Y)$, then $\varphi$ induces an isometry $\varphi_1$ of $Y_1$, where $\varphi_1(B_r(x))=B_r(\varphi(x))$. Since $Y$ is homogeneous, so is $Y_1$, and by the inductive hypothesis, $Y_1$ is ultrahomogeneous. Now suppose $p: A\to B$ is a partial isometry of $Y$. It induces a partial isometry $p_1: \{B_r(a)\,:\, a\in A\} \to \{B_r(b)\,:\, b\in B\}$ of $Y_1$. Thus there is an isometry $\varphi_1\in \mbox{Iso}(Y_1)$ extending $p_1$. Note that for any $x, y\in Y$, $B_r(x)$ is isometric to $B_r(y)$ by the homogeneity of $Y$, and each $B_r(x)$ is ultrahomogeneous. Now for each $B_r(x)\in Y_1$, we define an isometry from $B_r(x)$ to $\varphi_1(B_r(x))$ as follows. If $B_r(x)\cap A=\emptyset$, then we arbitrarily fix an isometry from $B_r(x)$ to $\varphi_1(B_r(x))$. If $B_r(x)\cap A\neq \emptyset$, then $|B_r(x)\cap A|=|\varphi_1(B_r(x))\cap B|$, and we fix an isometry from $B_r(x)$ to $\varphi_1(B_r(x))$ that sends each $a\in B_r(x)\cap A$ to $p(a)\in \varphi_1(B_r(x))\cap B$. Putting all of these isometries together, we obtain an isometry of $Y$ extending $p$. Thus $Y$ is ultrahomogeneous.

(ii)$\Rightarrow$(iii): We use a similar induction as in the above proof. If $|D(Y)|=1$ then $Y$ is clearly ultraextensive. Assume $|D(Y)|>1$ and let $r$ be the least element of $Y$. Define $Y_1$ similarly as above. Then by the inductive hypothesis $Y_1$ is ultraextensive. For any $x\in Y$, $B_r(x)$ is also ultraextensive. Arbitrarily fix an $x\in Y$ and let $Y_2=B_r(x)$. Consider $Y_1\times Y_2$ and define a metric $d'$ by
$$ d'((B_r(y_1), z_1), (B_r(y_2), z_2))=\max\{d_1(B_r(y_1), B_r(y_2)), d(z_1, z_2)\}. $$
Then $(Y,d)$ is isometric to $(Y_1\times Y_2, d')$. Thus we will view $Y$ as $Y_1\times Y_2$. Enumerate the elements of $Y_1$ by $b_1=B_r(y_1), \dots, b_m=B_r(y_m)$. We show that $Y$ is ultraextensive.

Since $Y$ is finite and ultrahomogeneous, it is enough to show that for every minimal S-extension $(Y_0,\phi_0)$ of $X$ where $X,Y_0\subseteq Y$ there is a group embedding $\pi:\hbox{Iso}(Y_0)\rightarrow \hbox{Iso}(Y)$ such that $\pi(g)\!\upharpoonright\!{Y_0}=g$.

Since $(Y_0,\phi_0)$ is a minimal S-extension, by Lemma \ref{homogeneous}, $Y_0$ is homogeneous and therefore ultrahomogeneous by the previous argument. It follows that the non-empty intersections of $Y_0$ with $b_i=B_r(y_i)$ are isometric. That is, if $Y_0\cap B_r(y_i)\neq \emptyset$ and $Y_0\cap B_r(y_j)\neq \emptyset$, then $Y_0\cap B_r(y_i)$ and $Y_0\cap B_r(y_j)$ are isometric. Arbitrarily fix such a non-empty intersection $Y_{02}$. Let $Y_{01}=\{ B_r(x)\,:\, x\in Y_0\}$. Then $Y_0$ is isometric to $Y_{01}\times Y_{02}$ as a subset of $Y_1\times Y_2$. Now, for every $g\in \hbox{Iso}(Y_0)$, $g$ induces an isometry of $Y_{01}$, which we denote by $\phi_{01}(g)$.  Furthermore, for every $g\in \hbox{Iso}(Y_0)$ and every $1\leq i,j\leq m$ such that $\phi_{01}(g)(b_i)=b_j$, $g$ induces an isometry of $Y_{02}$, which we denote by $\phi(i,j)(g)$. More precisely, if $g(b_i,z_1)=(b_j,z_2)$ for some $1\leq i,j\leq m$ and $z_1,z_2\in Y_{02}$, then $\phi(i,j)(g)(z_1)=z_2$. 
Since $Y_1$ and $Y_2$ are ultraextensive, there are group embeddings $\pi_{01}:\hbox{Iso}(Y_{01})\rightarrow \hbox{Iso}(Y_1)$ and $\pi_{02}:\hbox{Iso}(Y_{02})\rightarrow \hbox{Iso}(Y_2)$ such that $\pi_{01}(g)\!\upharpoonright\!{Y_{01}}=g$ and $\pi_{02}(g)\!\upharpoonright\!{Y_{02}}=g$. Let $\pi:\hbox{Iso}(Y_0)\rightarrow \hbox{Iso}(Y)$ be such that for $b_i\in Y_1$ and $z\in Y_2$ where $\phi_{01}(g)(b_i)=b_j$ we have 
\[
\pi(g)(b_i,z)=(\pi_{01}(\phi_{01}(g))(b_i), \pi_{02}(\phi(i,j)(g))(z)).
\]
Then, $\pi$ is as desired. That is, $\pi$ is a group embedding and if $g(b_i,z)=(b_j,z')$, then $\pi(g)(b_i,z)=(b_j,z')$. Therefore, $Y$ is ultraextensive.

(iii)$\Rightarrow$(i) is obvious.
\end{proof}

In view of Theorem~\ref{generalultrametric} it is easy to construct finite ultrahomogeneous or ultraextensive ultrametric spaces.

\begin{defn}
Let $(\Gamma,w)$ be a connected (undirected) weighted graph. The {\it maximum path metric} on $\Gamma$ is the metric defined by
\begin{align*}
d(x,y)=\inf\{\max\{w(y_i,y_{i+1}) \ : \ i &=1,\dots,n\} \ : \  y_1=x, y_{n+1}=y \text{ and}\\ &(y_i,y_{i+1}) \text{ is an edge in }\Gamma \text{ for all } i=1,\dots,n\}.
\end{align*}
\end{defn}

If $(\Gamma,w)$ is a connected finite weighted graph, then it is easy to see that $\Gamma$ with the maximum path metric is an ultrametric space.

\begin{prop}\label{ultraextensiveultrametric}
Let $X$ be a finite ultrametric space. Then $X$ can be extended to a finite ultraextensive ultrametric space $Y$. Furthermore, there is such $Y$ so that the set of distances in $X$ and $Y$ are the same.
\end{prop}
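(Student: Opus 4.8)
The plan is to deduce the proposition from Theorem~\ref{generalultrametric} and a recursive ``balancing'' construction that mirrors the product argument in the proof of the implication (ii)$\Rightarrow$(iii) of that theorem. Writing $D(Z)=\{d(x,y):x\neq y\in Z\}$, it suffices by Theorem~\ref{generalultrametric} to produce a finite \emph{homogeneous} ultrametric space $Y\supseteq X$ with $D(Y)=D(X)$; homogeneity then automatically upgrades to ultraextensiveness, and the distance condition is exactly the ``furthermore'' clause. I would prove this by induction on $k=|D(X)|$. For the base case $k\le 1$ the space $X$ is a single point or an equilateral space, which is already homogeneous, so $Y=X$ works.

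For the inductive step, let $k\ge 2$ and $r=\min D(X)$. Declare $x\sim_r y$ iff $d(x,y)\le r$; the ultrametric inequality makes this an equivalence relation whose classes are equilateral subspaces with all distances equal to $r$, and the quotient $X_1=X/\!\!\sim_r$ carries a well-defined ultrametric $d_1$ (with $d_1([x],[x'])=d(x,x')$ for $[x]\neq[x']$) satisfying $D(X_1)=D(X)\setminus\{r\}$. By the inductive hypothesis there is a finite homogeneous ultrametric $Y_1\supseteq X_1$ with $D(Y_1)=D(X_1)$. Put $M=\max_{x\in X}|[x]_{\sim_r}|$; since $r\in D(X)$ we have $M\ge 2$. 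Let $Y_2$ be the $M$-point equilateral space with all distances $r$, which is trivially homogeneous, and set $Y=Y_1\times Y_2$ with
\[
 d_Y\big((a,b),(a',b')\big)=\max\{d_1(a,a'),\,d_2(b,b')\}.
\]

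It then remains to verify four routine facts. First, $d_Y$ is an ultrametric, which follows from the ultrametric inequalities for $d_1,d_2$ and the identity $\max\{\max\{u_1,v_1\},\max\{u_2,v_2\}\}=\max\{\max\{u_1,u_2\},\max\{v_1,v_2\}\}$. Second, $D(Y)=D(X)$: every nonzero value of $d_1$ in $Y_1$ lies in $D(X)\setminus\{r\}$ and hence exceeds $r$, while nonzero values of $d_2$ equal $r$, so $d_Y$ takes exactly the values $(D(X)\setminus\{r\})\cup\{r\}$. Third, $X$ embeds isometrically into $Y$ by $x\mapsto\big([x]_{\sim_r},\iota_{[x]}(x)\big)$, where $X_1$ is identified with its copy in $Y_1$ and, for each class $C$, $\iota_C\colon C\hookrightarrow Y_2$ is a fixed injection (possible since $|C|\le M$); one checks this is injective and distance-preserving using $d_1([x],[x'])=d(x,x')>r$ when $[x]\neq[x']$ and that distinct points of a class are at distance $r$. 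Fourth, $Y$ is homogeneous: given $(a,b),(a',b')$, pick $g_1\in\mbox{Iso}(Y_1)$ with $g_1(a)=a'$ and any $g_2\in\mbox{Iso}(Y_2)$ with $g_2(b)=b'$, so $g_1\times g_2\in\mbox{Iso}(Y)$ does the job. Applying Theorem~\ref{generalultrametric} finishes the proof. I do not anticipate a genuine obstacle; the only points needing care are the bookkeeping that $D(X_1)=D(X)\setminus\{r\}$ (i.e.\ strict minimality of $r$ prevents it from reappearing as a quotient distance) and the degenerate base cases, both of which are straightforward.
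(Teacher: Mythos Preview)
Your proposal is correct and follows essentially the same approach as the paper: reduce to homogeneity via Theorem~\ref{generalultrametric}, induct on $|D(X)|$, pass to the quotient by the smallest-distance balls, apply the inductive hypothesis to obtain $Y_1$, and take the product with an equilateral space of size equal to the largest ball. Your write-up is in fact more explicit than the paper's in verifying the product metric, the embedding, and the distance set.
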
 

\begin{proof} By Theorem~\ref{generalultrametric} it suffices to construct an extension of $X$ that is homogeneous. We use the same notation as in the proof of Theorem~\ref{generalultrametric}. Our proof will be by induction on $|D(X)|$. If $|D(X)|=1$ then $X$ is already homogeneous. Assume $|D(X)|>1$ and let $r$ be the lease element of $D(X)$. Define $X_1=\{B_r(x)\,:\, x\in X\}$ and $d_1$ on $X_1$. Then $|D(X_1)|=|D(X)|-1$. By the inductive hypothesis, $X_1$ can be extended to a homogenous $Y_1$ with the same distances as in $X_1$. Now each $B_r(x)$ is a homogeneous space with every pair of points having distance $r$. Let $N=\max\{|B_r(x)|\,:\, x\in X\}$ and let $x_0\in X$ be such that $|B_r(x_0)|=N$. Then $X_2=B_r(x_0)$ is a homogeneous extension of each of $B_r(x)$. It follows that $Y_1\times X_2$ is a homogeneous ultrametric space extending $X$. 
\end{proof}

\begin{lem}\label{coherentultrametric}
Let $\epsilon>0$. Let $X_1\subseteq X_2$ be finite ultrametric spaces such that $X_1$ is an $\epsilon$-net in $X_2$. Let $(Y_1,\phi_1)$ be a finite minimal S-extension of $X_1$ such that $Y_1$ is an ultrametric space with the same distances as in $X_1$. Then there is a minimal S-extension $(Y_2, \phi_2)$ of $X_2$ such that $Y_2$ is an ultrametric space, $(Y_2,\phi_2)$ is coherent with $(Y_1,\phi_1)$, and $Y_1$ is an $\epsilon$-net in $Y_2$.
\end{lem}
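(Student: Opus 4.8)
The plan is to manufacture, from the given data, a single finite \emph{ultraextensive} ultrametric space $Z$ that contains consistent isometric copies of $X_2$ and of $Y_1$ and in which $Y_1$ is an $\epsilon$-net, and then to get $(Y_2,\phi_2)$ for free by applying clause (iii) of Definition~\ref{uedef} to $Z$. The conceptual point is that, since $Y_1$ must still be an $\epsilon$-net in $Y_2$, one must not enlarge the ``coarse part'' of $Y_1$ at scales $>\epsilon$ at all; all the extra room for $X_2$ and for the S-extension is created inside the $\epsilon$-balls, which is exactly what the product description below encodes.

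First I would record the structure of $Y_1$. As a minimal S-extension, $Y_1$ is homogeneous by Lemma~\ref{homogeneous}, hence ultrahomogeneous by Theorem~\ref{generalultrametric}. Partition $Y_1$ into its $\epsilon$-balls, let $\overline{Y_1}$ be the induced quotient ultrametric space (all distances $>\epsilon$); homogeneity forces all $\epsilon$-balls of $Y_1$ to be isometric to one ultrametric space $W$ (all distances $\le\epsilon$), and choosing such identifications gives an isometry $Y_1\cong\overline{Y_1}\times W$ for the maximum metric, with $\overline{Y_1}$ and $W$ again finite, homogeneous, hence ultrahomogeneous. Since $X_1$ is an $\epsilon$-net in $X_2$, the $\epsilon$-balls $C_1,\dots,C_k$ of $X_2$ correspond bijectively to the $\epsilon$-balls $A_i=C_i\cap X_1$ of $X_1$, giving a canonical isometry $\overline{X_1}\cong\overline{X_2}$; this space embeds isometrically into $\overline{Y_1}$, and the embedding $X_1\hookrightarrow Y_1\cong\overline{Y_1}\times W$ restricts on each $A_i$ to an isometric embedding $\iota_i\colon A_i\hookrightarrow W$ (into the same $W$ for all $i$).

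Next I would build a finite ultrahomogeneous ultrametric space $W'$ of diameter $\le\epsilon$ that contains $W$ and, for each $i$, an isometric copy $\iota'_i\colon C_i\hookrightarrow W'$ extending $\iota_i$. Starting from $W$, one successively forms the ultrametric amalgam over $A_i$ of the space constructed so far with $C_i$ — in the amalgam the distance between a new point $u$ on one side and a new point $v$ on the other is $\inf_a\max(d(u,a),d(a,v))$ over $a\in A_i$, which is $\le\epsilon$ automatically because both factors have diameter $\le\epsilon$ and $A_i\neq\emptyset$; this stays ultrametric and finite, so after $k$ steps one obtains a finite ultrametric $W''\supseteq W$ of diameter $\le\epsilon$ into which all the $C_i$ embed compatibly, and then Proposition~\ref{ultraextensiveultrametric} supplies a finite ultraextensive (hence ultrahomogeneous) ultrametric $W'\supseteq W''$ with the same distance set, so $W'$ still has diameter $\le\epsilon$. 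Put $Z=\overline{Y_1}\times W'$ with the maximum metric. As $\overline{Y_1}$ has all distances $>\epsilon$ and $W'$ all distances $\le\epsilon$, the $\epsilon$-balls of $Z$ are precisely the fibres $\{\bar c\}\times W'$; $Z$ is finite ultrametric and is homogeneous, since the maps $(\bar c,w)\mapsto(\alpha(\bar c),\beta_{\bar c}(w))$ with $\alpha\in\mathrm{Iso}(\overline{Y_1})$ and $\beta_{\bar c}\in\mathrm{Iso}(W')$ are isometries of $Z$ acting transitively; hence $Z$ is ultraextensive by Theorem~\ref{generalultrametric}. Finally one checks that $Y_1=\overline{Y_1}\times W\subseteq Z$, that the map sending $x\in C_i$ to $(\bar C_i,\iota'_i(x))$ is an isometric embedding of $X_2$ into $Z$ agreeing on $X_1$ with the previous embedding (so $X_1\subseteq X_2\subseteq Z$ and $Y_1\subseteq Z$ are consistent), and that $Y_1$ is an $\epsilon$-net in $Z$, since any $(\bar c,w')\in Z$ lies within $\mathrm{diam}(W')\le\epsilon$ of $(\bar c,w_0)\in Y_1$ for a fixed $w_0\in W$.

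It then remains only to apply clause (iii) of Definition~\ref{uedef} to the ultraextensive space $Z$, with the finite minimal S-extension $(Y_1,\phi_1)$ of $X_1$ and the finite sets $X_1\subseteq X_2\subseteq Z$: this yields a finite minimal S-extension $(Y_2,\phi_2)$ of $X_2$ with $Y_2\subseteq Z$ coherent with $(Y_1,\phi_1)$. Then $Y_2$ is ultrametric (being a subspace of $Z$), it is coherent with $(Y_1,\phi_1)$, and $Y_1\subseteq Y_2$ by coherence, so $Y_1$ is an $\epsilon$-net in $Y_2$ because it already is one in the larger space $Z$ — which is exactly the conclusion. I expect the only genuine work to be in the third step: building $W'$ (checking the iterated amalgam is ultrametric, finite, and of diameter $\le\epsilon$) and verifying that $Z=\overline{Y_1}\times W'$ is truly ultraextensive and contains $X_2$, $Y_1$ and $X_1$ compatibly with $Y_1$ an $\epsilon$-net; granted that, the lemma drops out of ultraextensiveness.
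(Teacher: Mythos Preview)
Your argument is correct, and the overall product-at-scale-$\epsilon$ idea is the same as the paper's, but your execution differs from the published proof in two respects worth noting.

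First, the paper takes the product the other way around: it sets $Y_2 = Y_1 \times B$, where $B$ is a homogeneous extension (via Proposition~\ref{ultraextensiveultrametric}) of the single space $B_{<\epsilon}$ obtained by collapsing all of $X_1$ to one point in $X_2$ and taking the maximum-path metric. Thus the paper does not decompose $Y_1$ as $\overline{Y_1}\times W$ and does not perform your iterated amalgam of the $C_i$ over the $A_i$; instead, it tacitly uses that distinct points of $Y_1$ are at distance $\geq \epsilon$ (which follows once the $\epsilon$-net is taken in the separated sense), so that $Y_1$ already plays the role of your coarse factor $\overline{Y_1}$ with $W$ trivial. Your version is slightly more general in that it works verbatim even when $Y_1$ contains nontrivial distances $\le\epsilon$.

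Second, the paper defines $\phi_2$ by hand---$\phi_2(p)(y_1,b)=(\phi_1(p)(y_1),b)$ for $p\in\mathcal P_{X_1}$, and arbitrary extensions using the ultrahomogeneity of $Y_2$ for $p\in\mathcal P_{X_2}\setminus\mathcal P_{X_1}$---and checks coherence directly. You instead build an ambient ultraextensive $Z$ and invoke clause~(iii) of Definition~\ref{uedef} to produce $(Y_2,\phi_2)$. This is cleaner in that coherence comes for free, at the cost of the extra amalgam construction of $W'$ and the verification that $Z=\overline{Y_1}\times W'$ is homogeneous (hence ultraextensive by Theorem~\ref{generalultrametric}). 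Both routes yield the lemma; yours is more conceptual, the paper's more explicit.
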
 

\begin{proof} By Lemma~\ref{homogeneous} and Theorem~\ref{generalultrametric}, $Y_1$ is homogeneous and therefore ultraextensive. It is enough to construct an S-extension $(Y_2, \phi_2)$ of $X_2$ that satisfies the prescribed conditions, as a minimal S-extension can always be extracted from an S-extension. Since $X_1$ is an $\epsilon$-net in $X_2$, we have that the set of $B_{<\epsilon}(x)=\{y\in X_2\,:\, d_{X_2}(x,y)<\epsilon\}$, when $x$ varies over $X_1$, is a partition of $X_2$. Let $B_{<\epsilon}=X_2/ \sim$ where $\sim$ identifies all points of $X_1$ and the metric on $B_{<\epsilon}$ is the maximum path metric. Then $B_{<\epsilon}$ extends $B_{<\epsilon}(x)$ for every $x\in X_1$ and therefore $X_2$ corresponds to a subset of the product $X_1\times B_{<\epsilon}$. Now $Y_1$ is a homogeneous extension of $X_1$ with the same distances as $X_1$. In particular any distance between distinct points in $Y_1$ is $\geq \epsilon$. We can let $Y_2=Y_1\times B$ where $B$ is a homogeneous extension of $B_{<\epsilon}$ with the same set of distances as $B_{<\epsilon}$. Then $Y_2$ is obviously homogeneous, and therefore ultrahomogeneous. It is clear that $Y_2$ is an ultrametric space, and that for every $y_2\in Y_2$ there is $y_1\in Y_1$ with $d_{Y_2}(y_2,y_1)<\epsilon$. We define a group homomorphism $\phi_2:\mathbb{F}(\mathcal{P}_{X_2})\rightarrow \hbox{Iso}(Y_2)$ such that for every $p\in \mathcal{P}_{X_1}$
\[
\phi_2(p)(y_1,y_2)=(\phi_1(p)(y_1),y_2)
\] 
and for every $p\in \mathcal{P}_{X_2}\setminus \mathcal{P}_{X_1}$ let $\phi_2(p)$ be an isometry of $Y_2$ such that $p\subseteq \phi_2(p)$. Note that since $Y_2$ is ultrahomogeneous, it is possible to find $\phi_2(p)$ as required for every $p\in \mathcal{P}_{X_2}\setminus \mathcal{P}_{X_1}$. It is clear that $(Y_2,\phi_2)$ is coherent with $(Y_1,\phi_1)$.
\end{proof}

\begin{thm}
Every compact ultrametric space can be extended to a compact ultraextensive ultrametric space. In particular, every compact ultrametric space has a compact ultrametric S-extension.
\end{thm}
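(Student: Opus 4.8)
The plan is to realize a compact ultrametric $X$ inside an inverse limit built from a coherent tower of finite minimal S-extensions via Lemma~\ref{coherentultrametric}, and to derive ultraextensiveness of the limit from the finite case (Theorem~\ref{generalultrametric}). First I would set up the finite approximations: fix a strictly decreasing sequence $\epsilon_n\downarrow 0$; since $X$ is compact and ultrametric, for each $n$ the relation ``$d(x,y)<\epsilon_n$'' has only finitely many classes, so one may choose finite sets $R_1\subseteq R_2\subseteq\cdots\subseteq X$ with $R_n$ meeting each $\epsilon_n$-ball in exactly one point. Then $R_n$ is an $\epsilon_n$-net in $R_{n+1}$, distinct points of $R_n$ are at distance $\geq\epsilon_n$, and $\bigcup_n R_n$ is dense, so $X=\overline{\bigcup_n R_n}$ (if this union stabilizes, $X$ is finite and Proposition~\ref{ultraextensiveultrametric} finishes the proof; so assume not). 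Write $X_n:=R_n$.

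Next I would build the tower $(Y_n,\phi_n)$. For the base, use Proposition~\ref{ultraextensiveultrametric} to find a finite ultraextensive ultrametric $\widetilde Y_1\supseteq X_1$ with the same distance set as $X_1$, take an S-map for $X_1$ into $\mbox{Iso}(\widetilde Y_1)$ (ultrahomogeneity), and extract a minimal sub-S-extension $(Y_1,\phi_1)$ inside $\widetilde Y_1$; then $(Y_1,\phi_1)$ is a finite minimal S-extension of $X_1$ with $Y_1$ ultrametric and $D(Y_1)=D(X_1)$. Inductively, given such a $(Y_n,\phi_n)$, apply Lemma~\ref{coherentultrametric} with $\epsilon=\epsilon_n$ to get a finite minimal S-extension $(Y_{n+1},\phi_{n+1})$ of $X_{n+1}$ that is ultrametric, coherent with $(Y_n,\phi_n)$, and has $Y_n$ as an $\epsilon_n$-net; inspecting that lemma's proof, $Y_{n+1}=Y_n\times B^{(n)}$ with the max metric, where $B^{(n)}$ is a homogeneous ultrametric of diameter $<\epsilon_n$, and one checks $D(Y_{n+1})=D(X_{n+1})$, so the induction continues. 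Coherence yields, for a basepoint $\ast\in B^{(n)}$, an isometric embedding $s_n\colon Y_n\hookrightarrow Y_{n+1}$, $y\mapsto(y,\ast)$, with $\phi_{n+1}(p)\!\upharpoonright\! Y_n=\phi_n(p)$ and $s_n(X_n)\subseteq X_{n+1}$; the first-coordinate projection $\pi_n\colon Y_{n+1}\to Y_n$ is the quotient by ``$d<\epsilon_n$'' and satisfies $\pi_n\circ s_n=\mbox{id}$.

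Put $Y:=\varprojlim(Y_n,\pi_n)$, with $d_Y(y,y'):=\lim_m d_{Y_m}(y_m,y'_m)$ (writing $y_m$ for the image of $y$ in $Y_m$); since $\mbox{diam}(B^{(j)})<\epsilon_j\to0$ this limit is eventually constant and $d_Y$ is an ultrametric. Then $Y$ is compact (a closed subspace of $\prod_n Y_n$ with each $Y_n$ finite), $Y=\overline{\bigcup_n s(Y_n)}$, and $X=\overline{\bigcup_n s(X_n)}\subseteq Y$ isometrically. For each $n$, identifying $Y_m=Y_n\times\prod_{j=n}^{m-1}B^{(j)}$ gives an isometry $Y\cong Y_n\times T_n$, where $T_n:=\prod_{j\geq n}B^{(j)}$ with the $\sup$-metric is a compact ultrametric space, is homogeneous (a product of homogeneous finite ultrametric spaces), and has $\mbox{diam}(T_n)<\epsilon_n$.

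Finally I would check that $Y$ is ultraextensive. Each $Y_n$ is a minimal S-extension, hence homogeneous by Lemma~\ref{homogeneous}, hence ultraextensive by Theorem~\ref{generalultrametric}. For clause (i) of Definition~\ref{uedef}: given a partial isometry $p$ of $Y$ with finite domain, pick $n$ so large that distinct points of $\mbox{dom}(p)$ lie in distinct $\epsilon_n$-balls; in $Y\cong Y_n\times T_n$, since $\mbox{diam}(T_n)<\epsilon_n$, the $Y_n$-coordinates of $\mbox{dom}(p)$ and of $\mbox{ran}(p)$ are distinct, so $p$ descends to a partial isometry of $Y_n$, extended by some $\sigma\in\mbox{Iso}(Y_n)$; then any map $(y,t)\mapsto(\sigma(y),\rho_y(t))$ with $\rho_y\in\mbox{Iso}(T_n)$ is an isometry of $Y_n\times T_n$, and by homogeneity of $T_n$ one can choose the finitely many relevant $\rho_y$ (and the identity otherwise) so that this isometry extends $p$. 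For clauses (ii) and (iii): pick $n$ with $\epsilon_n$-balls separating the relevant finite configuration, ``round'' each $(y,t)$ to $(y,\ast_n)$ (an isometry on that configuration), transport the configuration into the slice $Y_n\times\{\ast_n\}\cong Y_n$, apply clause (ii) resp.\ (iii) of ultraextensiveness for $Y_n$ there, and transport the resulting finite (minimal) S-extension back into $Y$ by an isometry of $Y$, provided by clause (i), that undoes the rounding. This proves $Y$ ultraextensive; in particular $Y$ is ultrahomogeneous, so restricting to $\mathcal{P}_X$ an S-map witnessing ultrahomogeneity of $Y$ exhibits a compact ultrametric S-extension of $X$.

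The main obstacle is clause (i): one must use $Y_n$ simultaneously as a subspace of $Y$ (via $s_n$) and as a quotient of $Y$ (via $\pi_n$), and exploit $\mbox{diam}(T_n)<\epsilon_n$ so that separating $\mbox{dom}(p)$ into $\epsilon_n$-balls collapses $p$ to a genuine partial isometry of the \emph{finite} space $Y_n$ while leaving complete freedom in the fibre directions $T_n$; the ``wreath-form'' isometries of $Y_n\times T_n$ together with homogeneity of $T_n$ are exactly what allow a finite solution to be lifted back. A secondary point needing care throughout is keeping the invariant $D(Y_n)=D(X_n)$, which is what makes Lemma~\ref{coherentultrametric} applicable at every stage.
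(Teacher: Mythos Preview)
Your strategy coincides with the paper's: build a coherent tower of finite ultraextensive ultrametric extensions $Y_n$ of $\epsilon_n$-nets $X_n$ via Proposition~\ref{ultraextensiveultrametric} and Lemma~\ref{coherentultrametric}, and take the limit. The paper phrases the limit as the completion of $\bigcup_n Y_n$, you phrase it as $\varprojlim Y_n$; with the product structure $Y_{n+1}\cong Y_n\times B^{(n)}$ these are the same space.

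One technical wrinkle: you invoke Lemma~\ref{coherentultrametric} to obtain a \emph{minimal} S-extension $(Y_{n+1},\phi_{n+1})$ and then assert $Y_{n+1}=Y_n\times B^{(n)}$. But the lemma's proof builds $Y_n\times B$ as an S-extension and only then extracts a minimal one, which could be a proper subset. Since your whole argument (the inverse limit, the decomposition $Y\cong Y_n\times T_n$, the wreath-form isometries) rests on the exact product structure, you should drop minimality and work directly with the full products $Y_{n+1}:=Y_n\times B^{(n)}$. This is harmless: the only place minimality is used in Lemma~\ref{coherentultrametric} is to invoke Lemma~\ref{homogeneous} and conclude $Y_1$ is homogeneous, and your $Y_n$ are homogeneous by construction. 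With that adjustment your inductive invariant $D(Y_n)=D(X_n)$ and the rest of your argument go through.

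Where your write-up genuinely differs from the paper is in verifying ultrahomogeneity of $Y$. The paper approximates a partial isometry $p$ by partial isometries $p_k$ of $Y_k$, extends each via the coherent S-maps to isometries $P_k$ of $Y$, and extracts a limit using compactness of $\mbox{Iso}(Y)$. You instead exploit the splitting $Y\cong Y_n\times T_n$: descend $p$ to a partial isometry of $Y_n$, extend there, and lift via an explicit wreath-type isometry $(y,t)\mapsto(\sigma(y),\rho_y(t))$ using homogeneity of $T_n$. Your route is more constructive and avoids appealing to compactness of the isometry group; the paper's route is shorter and does not require tracking the product decomposition. For clauses (ii) and (iii) both proofs amount to the same ``round into $Y_n$, solve there, transport back'' maneuver.
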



\begin{proof}
Let $\{X_k\}_{k=1}^\infty$ be an increasing sequence of finite subsets of $X$ such that for each $k,$ $X_k$ is a $\frac{1}{2^k}$-net. Then by Proposition~\ref{ultraextensiveultrametric} and Lemma \ref{coherentultrametric}, there is a sequence of S-extensions $\{(Y_k,\phi_k)\}_{k=1}^\infty$ such that $\{Y_k\}_{k=1}^\infty$ is an increasing sequence of finite ultraextensive ultrametric spaces, $(Y_k,\phi_k)$ is an S-extension of $X_k$, $(Y_{k+1},\phi_{k+1})$ is coherent with $(Y_k,\phi_k)$, and $Y_k$ is a $\frac{1}{2^k}$-net in $Y_{k+1}$. Let $Y$ be the completion of $\bigcup_{k=1}^\infty Y_k$. Then, $Y$ is clearly an ultrametric space; $Y$ is compact since $\bigcup_{k=1}^\infty Y_k$ is totally bounded. In fact, $Y_k$ is a $\frac{1}{2^k}$-net in $Y$. Since each $Y_k$ is ultraextensive, so is $\bigcup_{k=1}^\infty Y_k$. We show that $Y$ is ultraextensive. 

We first show that $Y$ is ultrahomogeneous. For this, let $p: A\to B$ be a partial isometry of $Y$. Let $\frac{1}{2^k}$ be less than the smallest non-zero distance between points of $A$. Since $Y_k$ is a $\frac{1}{2^k}$-net in $Y$, there are $A_k, B_k\subseteq Y_k$ and $p_k: A_k\to B_k$ such that points in $A$ are approximated by points in $A_k$, points in $B$ are approximated by points in $B_k$, and consequently $p_k$ is also a partial isometry. Each $p_k$ can be extended via $\phi_n(p_k)$ for $n> k$ to $\bigcup_{n>k}\phi_n(p_k)$, an isometry of $\bigcup_{k=1}^\infty Y_k$, and then uniquely to an isometry $P_k$ of $Y$. Since $\mbox{Iso}(Y)$ is compact, the collection of $P_k$ has an accumulation point $\varphi$, which is an isometry of $Y$. Since each $P_k$ approxiates $p$ with an error less than $\frac{1}{2^k}$, it follows that $\varphi\supseteq p$. This shows that any partial isometry of $Y$ can be extended to an isometry of $Y$. In particular, it also shows that any partial isometry of $X$ can be extended to an isometry of $Y$, thus there is a suitable $\phi$ such that $(Y, \phi)$ is an S-extension of $X$.

For the remaining properties of ultraextensiveness, it suffices to show that any finite subset of $Y$ can be extended to a finite homogeneous, and therefore ultraextensive, subset of $Y$. For this, let $A\subseteq Y$ be finite and let $\frac{1}{2^k}$ be less than the smallest non-zero distance between points in $A$. Since $Y_k$ is a $\frac{1}{2^k}$-net in $Y$, there is a set $A_k\subseteq Y_k$ such that for each $a\in A$ there is a unique point $a_k\in A_k$ such that $d(a, a_k)<\frac{1}{2^k}$. Consider the set $Z_k=(Y_k\setminus A_k)\cup A$. It is easy to see that the map $\pi: Z_k\to Y_k$ defined by $\pi(a)=a_k$ for $a\in A$ and $\pi(y)=y$ otherwise is an isometry. Thus $Z_k$ is a finite homogenenous subset of $Y$ extending $A$. 
\end{proof}

\section{Open Problems\label{sec:open}}

One general problem is to determine if a certain class of finite metric spaces admit finite S-extensions in the same class. For example, we do not know if the class of finite Euclidean metric spaces has this property.

\begin{ques} Let $X\subseteq \mathbb{R}^n$ be a finite subset. Does $X$ have a finite S-extension $(Y, \phi)$ with $Y\subseteq \mathbb{R}^m$ for some $m\geq n$?
\end{ques}

As stated in Theorem \ref{locallyfinite}, we know that the isometry group of every ultraextensive metric space has a dense locally finite subgroup. It is of interest to know if the isomorphism group of other well-know mathematical objects has the same property. In particular

\begin{ques} Does the homeomorphism group of the Hilbert cube have a dense locally finite subgroup? 
\end{ques}

\begin{ques} Does the linear isometry group of the Gurarij space have a dense locally finite subgroup? 
\end{ques}

\end{document}